\documentclass[reqno, a4paper]{amsart}
\usepackage{amssymb, amscd, amsfonts,  amsthm}
\usepackage[mathscr]{eucal}
\usepackage{graphicx}

\hyphenation{hom-e-o-mor-ph-i-sm}

\newtheorem{theorem}{Theorem}[section]
\newtheorem{lemma}[theorem]{Lemma}
\newtheorem{corollary}[theorem]{Corollary}
\newtheorem{proposition}[theorem]{Proposition}

\theoremstyle{definition}

\newtheorem{remark}[theorem]{Remark}


\newcommand{\remove}[1]{}

\DeclareMathOperator{\cl}{\mathrm{cl}}
\DeclareMathOperator{\pos}{\mathrm{pos}}
\DeclareMathOperator{\aff}{\mathrm{aff}}
\DeclareMathOperator{\conv}{\mathrm{conv}} 
\DeclareMathOperator{\den}{\mathrm{den}}

\newcommand{\GLnZ}{\mathsf{GL}(n,\mathbb{Z})\ltimes \mathbb{Z}^n}
\newcommand{\GLnQ}{\mathsf{GL}(n,\mathbb Q)\ltimes \mathbb Q^n}
\newcommand{\GLtwoZ}{\mathsf{GL}(2,\mathbb{Z})\ltimes \mathbb{Z}^2}
\newcommand{\GLtwoQ}{\mathsf{GL}(2,\mathbb Q)\ltimes \mathbb Q^2}


 \title[Basic geometry of the affine group over $\mathbb Z$]
{Basic geometry of the affine group over $\mathbb Z$}

\author{Daniele Mundici }

\address[D. Mundici]{Department of
Mathematics and Computer Science  ``Ulisse Dini'' 
University of Florence\\
Viale Morgagni 67/a \\
I-50134 Florence \\
Italy}
\email{ mundici@math.unifi.it }



\begin{document}

\thanks{2000 {\it Mathematics Subject Classification.}
Primary:   13A50.  Secondary:   11D09,  11E16,  11H55,  13A50, 14G05, 14H50, 14L24, 14M25, 14R20, 20H25}  
\keywords{Affine group over the integers,   Klein program,
complete invariant, Turing computable invariant, 
$\mathsf{GL}(n,\mathbb{Z})$-orbit,  conic,
conjugate diameters, 
Apollonius of Perga, Pappus of Alexandria,
 quadratic form, 
 Clifford--Hasse--Witt invariant,    Hasse-Minkowski theorem,
Farey regular simplex, regular triangulation,  
desingularization, 
weak Oda conjecture,    
Hirzebruch-Jung continued fraction algorithm,
polyhedron,
Markov unrecognizability theorem. 
}

\begin{abstract} 
The subject matter of this paper is  the geometry of
the affine group  over the integers,
 $\mathsf{GL}(n,\mathbb{Z})\ltimes \mathbb{Z}^n$.    Turing-computable complete  
$\mathsf{GL}(n,\mathbb{Z})\ltimes \mathbb{Z}^n$-orbit invariants
are constructed  for   angles, segments,  triangles and ellipses.
In rational affine 
$\mathsf{GL}(n,\mathbb Q)\ltimes \mathbb Q^n$-geometry, ellipses
are classified by the Clifford--Hasse--Witt invariant, via 
the Hasse-Minkowski theorem.
We classify ellipses in  
$\mathsf{GL}(n,\mathbb{Z})\ltimes \mathbb{Z}^n$-geometry   
combining  results  by
Apollonius of Perga and Pappus of Alexandria
with the Hirzebruch-Jung continued fraction algorithm and 
the Morelli-W\l odarczyk solution of the weak
Oda conjecture on the factorization of toric varieties.
We then consider  {\it rational polyhedra},
i.e., finite unions of simplexes in  $\mathbb R^n$  with  rational vertices.  Markov's unrecognizability theorem for combinatorial manifolds states
the undecidability of  the problem whether
two rational polyhedra $P$ and $P'$  are  continuously  
$\mathsf{GL}(n,\mathbb Q)\ltimes \mathbb Q^n$-equidissectable.  
The same problem for the continuous 
 $\mathsf{GL}(n,\mathbb{Z})\ltimes \mathbb{Z}^n$-equi\-dis\-sect\-ability 
 of  $P$ and $P'$  is open. 
We prove the  decidability of the problem whether two
rational polyhedra  $P,Q$   in $\mathbb R^n$ have the same 
$\mathsf{GL}(n,\mathbb{Z})\ltimes \mathbb{Z}^n$-orbit.  
\end{abstract}

\maketitle

\section{Introduction}
\label{section:introduction}
\noindent In Klein's 1872
 inaugural lecture at the University of Erlangen
 one finds the following programmatic statement\footnote{See   
page 219  in his paper 
``A comparative review of recent researches in geometry'', 
 Bull. New York Math. Soc.,  2(10) (1893) 215--249, 
https://projecteuclid.org/euclid.bams/1183407629}:

\begin{quote}
{\small 
As a generalization of geometry arises then the following comprehensive problem
$[\dots]$:
{\it Given a manifoldness and  a group of transformations of the same;  to develop the theory of invariants relating to that group.}
}
\end{quote}
In the spirit of Klein's program, in the first part of this paper 
we construct   Turing-computable complete  invariants of angles, 
segments,  triangles and ellipses  in the geometry of the affine group
  over the integers, $\GLnZ$.
    
    Our starting point is the following  problem,
for arbitrary   $X,X'\subseteq \mathbb R^n$:
\begin{equation}
\label{equation:theproblem}
 \mbox{Does there exist a map
 $\gamma\in \GLnZ$ of  
   $X$ onto $X'$ ?}
\end{equation}

\noindent
Otherwise stated: Do  $X$ and $X'$ have the same $\GLnZ$-orbit ?
By  a ``decision method''   for  this problem 
we understand 
a  Turing machine  $\mathcal M$ which, over 
any  input $X,X'$   decides whether $X$
and $X'$  have  the same orbit.
$X$ and $X'$   must be
effectively presented to $\mathcal M$ as  finite strings of 
 symbols.
  Thus, e.g.,  if $X$ is a triangle, we will assume
that  its vertices have rational coordinates, and
  $X$ is  presented
 to $\mathcal M$ via the list of its vertices.  If
 $X$ is an ellipse,  $X$  is  understood as 
the zeroset   $Z(\phi)$  in $\mathbb R^2$ 
of a quadratic  polynomial  $\phi(x,y)$ with rational coefficients, 
and  is  presented to $\mathcal M$ 
via the list of coefficients of
  $\phi.$ 
  
In Theorem 
\ref{theorem:triangle}
we equip rational triangles with
a  (Turing-) computable complete
$\GLnZ$-orbit invariant.
Basic constituents of  our side-angle-side  invariant are the  
 invariants introduced in 
  \cite{cabmun-etds} 
  for affine spaces in $\GLnZ$-geometry,  (Theorem
   \ref{theorem:classification}).
 Section \ref{section:cd} is devoted to showing
 the computability of these invariants. 
 Further  main constituents
 are the 
 $\GLnZ$-orbit invariants for angles and segments
 constructed   in Theorems 
\ref{theorem:angle} and \ref{theorem:quadruple}.
It follows  that 
Problem \eqref{equation:theproblem} is decidable
for segments, angles and  triangles in $\GLnZ$-geometry.
 
In Section \ref{section:ellipses}
we construct computable complete invariants for ellipses:
In euclidean 
 geometry, ellipses  are
classified by the lengths of their major and minor axes.
In  $\GLtwoQ$-geometry,
 the Hasse-Minkowski theorem  classifies rational
  ellipses  by their 
(Clifford-Hasse-Witt)
invariants,  \cite[1.1]{aue},  \cite[\S 5]{gen},  \cite[\S 4]{kit}.
Let   $\mathcal E$ denote the set of 
 rational ellipses  $E\subseteq \mathbb R^2$  
 containing a rational point. 
As is well known, (see, e.g.,
  \cite{crerus},  \cite{sim}),
 from  the input  rational coefficients of $\phi$  it is
 decidable whether the zeroset of $\phi$ is an
  ellipse $E\in \mathcal E$.
If this is the case,  $E$ contains a dense set of rational points.  
In Theorem \ref{theorem:ellipses} a
 finite set of  invariants
  is computed  from $\phi$, in such a  way that  
a rational ellipse $E'$ has the same  
$\GLtwoZ$-orbit  of $E$ iff
$E$ and $E'$ have the same invariants. 
It follows  that 
Problem \eqref{equation:theproblem} is decidable
for ellipses.

For our constructions in this paper 
we combine results on conjugate diameters  by
Apollonius of Perga \cite{apo}  and Pappus of Alexandria \cite{pap},
with the Hirzebruch-Jung continued fraction algorithm, 
 \cite{ewa, ful, oda},  and 
the Morelli-W\l odarczyk solution of the weak
Oda conjecture on the factorization of toric varieties.
\cite{mor,wlo},

In Theorem 
\ref{theorem:polyhedra},
Problem \eqref{equation:theproblem} is shown to be
decidable  for
 {\it rational 
polyhedra},  i.e., finite union of simplexes with
rational vertices, \cite{sta}.  In a final remark this
result is compared with Markov's  
theorem (\cite{chelek, sht},
see Theorem \ref{theorem:markov})
  on the unrecognizability of
rational polyhedra  in $\GLnQ$-geometry,
 to the effect that  manifolds cannot be characterized up to homeomorphism by computable complete invariants.

\section{Classification of rational affine spaces in $\GLnZ$-geometry}
\label{section:affine space}
A {\it rational affine hyperplane} $H$
is  a subset of $ \mathbb R^n$  of the form 
$
H =\{z \in \mathbb R^{n}   \mid  \langle p, z\rangle = \upsilon\},
   \mbox{ for some nonzero vector
     $p \in \mathbb Q^n$ and 
     $\upsilon \in \mathbb Q$.}$ 
     Here
      $\langle \mbox{-},\mbox{-}\rangle$ denotes scalar product.
     %
     %
     %
     %
     %
%
Any  intersection  
of rational affine hyperplanes in $\mathbb R^n$
is said to be   a {\it rational affine space}  in  $\mathbb R^n$.
For any  subset $X$ of  $\mathbb R^n$,
  the   {\it affine span} 
 $\,\,\aff(X)\,\,$  is defined by stipulating that 
 a point 
 $z$ belongs to   $\aff(X)$
 iff there are   $w_1,\ldots,w_k\in X$
and   $\lambda_1,\ldots,\lambda_k \in \mathbb R$
such that $\lambda_1+\dots+\lambda_k=1$
and $z=\lambda_1 w_1+\dots+\lambda_k w_k$.
(See   \cite{hand} for this terminology.)
Equivalently, we say that  $\aff(X)$ is the set of
 {\it affine combinations}  
of elements of $X$. 
A set  $\{y_1,\ldots,y_l\}\subseteq \mathbb R^n$ is said to be
{\it affinely independent}  if    none of its  elements 
 is  an affine combination of the remaining elements.
 For $\,\,0\leq d \leq n$, a {\it d-simplex}
in $\mathbb R^{n}$ is the
 convex hull  $T = \conv(v_{0},\ldots,v_{d})$ of $d+1$ affinely
independent points $v_{0},\ldots,v_{d}\in \mathbb R^n$. 
 The {\it  vertices}
$v_{0},\ldots,v_{d}$ are uniquely determined by $T$.
$T$ is said to be a {\it rational simplex}  if its vertices are
rational.  The (affine) {\it dimension}  $\dim(T)$ is equal to $d$.

The  {\it denominator}  \,\,$\den(x)$\,\, of a rational point
  $x=(x_1,\dots,x_n)\in \mathbb Q^n$
is  the least common denominator 
  of its coordinates. 
The  vector 
$$
 \widetilde{ x } = \bigl(\den(x)\cdot x_1,
 \ldots, \,\den(x)\cdot x_n,\,\,\den(x)\bigr) 
 \in \mathbb{Z}^{n+1}
$$
is said to be the  {\it homogeneous correspondent}
of   $x$.    
The integer vector 
$ \widetilde{ x }$ is {\it primitive}, \cite[p.24]{oda}, i.e., 
the greatest common divisor of its coordinates
is equal to 1.  
Every primitive integer vector $q\in \mathbb Z^{n+1}$
whose $(n+1)$th coordinate is $>0$ is the homogeneous correspondent
of a unique rational point $x\in \mathbb R^n$, called the
{\it affine correspondent} of $q.$

With the notation of  \cite[I, Definition 1.9, p.6]{ewa},
given  vectors $v_1,\ldots,v_s \in \mathbb R^{n}$ we write 
\begin{equation}
\label{equation:cone}
{\pos}[v_1 ,\ldots, v_s]=
\{x\in  \mathbb R^{n}\mid x=\rho_0 v_0+
\dots+ \rho_s v_s, 
 \,\,0\leq \rho_0,\dots,\rho_s\in \mathbb R \}.
\end{equation}
for their {\it positive hull} in $\mathbb R^{n}$. 
Let  $t=1,2,\ldots,n$. Adopting
 the terminology of  \cite[p.146]{ewa}, 
%
%
%
%
%
%
by a  {\it $t$-dimensional
rational simplicial cone} in $\mathbb R^n$ 
we understand a set $C
\subseteq \mathbb R^n$ of the form 
$
C=  \pos[ w_1 ,\ldots, w_t],
$
for linearly independent primitive integer vectors 
$w_1 ,\ldots, w_t \in \mathbb Z^n$.  
The latter are said to be the
{\it primitive generating vectors} of $C$.  They are uniquely
determined by $C$. 
By a {\it face } of $C$ we mean the positive hull
of a subset  of $\{w_1 ,\ldots, w_t\}.$
The face of  $C$  determined by the empty
set is the singleton  $\{0\}$.  This is the only
zero-dimensional cone in  $\mathbb R^n$.

\subsection*{Farey regularity}
A rational $d$-simplex  
$T=\conv(v_0,\ldots,v_d)\subseteq \mathbb R^n$
is said to be  {\it {\rm(}Farey{\rm)} regular}
(``unimodular'' in \cite{mun-dcds}) 
if the set $\{\tilde v_0 ,\ldots, \tilde v_d\}$ of 
homogeneous correspondents of its vertices 
can be extended to a   basis  of
the free abelian group $\mathbb{Z}^{n+1}$.
Equivalently, the cone  ${\pos}[\tilde v_0,\dots,\tilde v_m]
\subseteq \mathbb R^{n+1}$
is regular in the sense of \cite[Definition V 1.10, p.146]{ewa},
or ``nonsingular'' in the sense of \cite[p.29]{ful} and \cite[p.15]{oda}, or ``unimodular'' in the sense of \cite[\S 7]{hand}.

A {\it rational triangulation in $\mathbb R^n$}  is an (always finite)
simplicial complex $\Delta$ such that the vertices
of every simplex in $\Delta$ have rational
 coordinates in $\mathbb R^n$.
The point-set union of all simplexes in $\Delta$
(called the {\it support} of $\Delta$)  is the
most general possible rational polyhedron in $\mathbb R^n$,
 \cite[Chapter II]{sta}.

A   simplicial complex
is said to be  a \emph{regular
triangulation} (of its support) if 
all its simplexes are regular.
Regular triangulations are 
 affine counterparts of  regular fans of toric algebraic
geometry, \cite[p.165]{ewa},  called ``nonsingular
fans'' in  \cite[Theorem 1.10]{oda}.
({\it Warning}: the notion of a ``regular'' triangulation given in 
 \cite[p. 387]{hand} has a different meaning.)

Regular triangulations  play a fundamental role
in $\GLnZ$-geometry, as well as in the theory of  AF C*-algebras
with lattice-ordered $K_0$-group, \cite{mun-adv, mun-lincei},
(also see \cite{mun-emmone} for recent developments).

 \begin{lemma}
 [A corollary of Steinitz exchange lemma]
 \label{lemma:steinitz}
Let \/ $\conv(x_0,\dots,x_m)\subseteq 
\mathbb R^n$ be a regular $m$-simplex.
Suppose
$\conv(y_0,\dots,y_e)\subseteq \mathbb R^n$ is
a  regular $e$-simplex and  
 $\aff(y_0,\dots,y_e)=\aff(x_0,\dots,x_e)$. Then
   $e\leq m$,  and\, 
 $\conv(y_0,\dots,y_e,x_{e+1},\dots,x_m)$ is a regular
 $m$-simplex. 
 \end{lemma}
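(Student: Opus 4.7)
The plan is to lift the problem to $\mathbb Z^{n+1}$ via homogeneous correspondents and then run a lattice-theoretic version of Steinitz exchange there. Recall that a rational $d$-simplex $\conv(z_0,\dots,z_d)\subseteq\mathbb R^n$ is regular exactly when $\{\tilde z_0,\dots,\tilde z_d\}$ extends to a $\mathbb Z$-basis of $\mathbb Z^{n+1}$, from which it is easy to see that it is in fact a $\mathbb Z$-basis of the sublattice obtained by intersecting $\mathbb Z^{n+1}$ with the $\mathbb R$-linear span of these vectors. The bridge from the affine-span hypothesis to linear algebra is the observation that, since $\tilde z_i=\den(z_i)\cdot(z_i,1)$, the $\mathbb R$-linear span of $\tilde z_0,\dots,\tilde z_d$ in $\mathbb R^{n+1}$ coincides with the span of $(z_0,1),\dots,(z_d,1)$, and this subspace depends only on $\aff(z_0,\dots,z_d)$.

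First I would set $V:=\mathrm{span}_{\mathbb R}(\tilde y_0,\dots,\tilde y_e)$; the hypothesis $\aff(y_0,\dots,y_e)=\aff(x_0,\dots,x_e)$ then forces $V=\mathrm{span}_{\mathbb R}(\tilde x_0,\dots,\tilde x_e)$, an $(e+1)$-dimensional subspace of $\mathbb R^{n+1}$. Since $\tilde x_0,\dots,\tilde x_m$ are linearly independent (as part of a $\mathbb Z$-basis of $\mathbb Z^{n+1}$), $V$ sits inside their $(m+1)$-dimensional $\mathbb R$-span, forcing $e+1\le m+1$ and hence $e\le m$.

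Next I would verify that $\{\tilde x_0,\dots,\tilde x_e\}$ and $\{\tilde y_0,\dots,\tilde y_e\}$ are both $\mathbb Z$-bases of the sublattice $L:=V\cap\mathbb Z^{n+1}$. Each of these sets extends to a $\mathbb Z$-basis of $\mathbb Z^{n+1}$; for a lattice vector of $V$ written in such a basis, the coefficients on the complementary basis vectors must vanish (being linearly independent modulo $V$), so only the first $e+1$ basis elements are ever needed. Consequently the transition matrix $A\in \mathsf{GL}(e+1,\mathbb R)$ expressing $\tilde y_i$ in terms of $\tilde x_0,\dots,\tilde x_e$ actually lies in $\mathsf{GL}(e+1,\mathbb Z)$.

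Finally I would extend the regular $m$-simplex to a $\mathbb Z$-basis $\{\tilde x_0,\dots,\tilde x_m,u_{m+1},\dots,u_n\}$ of $\mathbb Z^{n+1}$. The block-diagonal matrix with $A$ on the top $(e+1)\times(e+1)$ block and identity elsewhere belongs to $\mathsf{GL}(n+1,\mathbb Z)$, so $\{\tilde y_0,\dots,\tilde y_e,\tilde x_{e+1},\dots,\tilde x_m,u_{m+1},\dots,u_n\}$ is again a $\mathbb Z$-basis of $\mathbb Z^{n+1}$. Therefore $\{\tilde y_0,\dots,\tilde y_e,\tilde x_{e+1},\dots,\tilde x_m\}$ extends to a $\mathbb Z$-basis of $\mathbb Z^{n+1}$; affine independence of the corresponding vertices follows for free from linear independence of the homogeneous correspondents, so $\conv(y_0,\dots,y_e,x_{e+1},\dots,x_m)$ is the desired regular $m$-simplex. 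The delicate step is the integrality of both $A$ and $A^{-1}$, and this is precisely where the regularity hypotheses on both simplices enter, through the lattice-basis characterization of $L$ established above.
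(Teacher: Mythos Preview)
Your argument is correct and follows exactly the route the paper indicates: pass to homogeneous correspondents in $\mathbb Z^{n+1}$ and run the lattice version of Steinitz exchange there. The paper's own proof is a one-line sketch (``by definition of regularity we have a trivial variant of Steinitz exchange lemma''); you have simply spelled out the details, in particular the key point that both $\{\tilde x_0,\dots,\tilde x_e\}$ and $\{\tilde y_0,\dots,\tilde y_e\}$ are $\mathbb Z$-bases of $V\cap\mathbb Z^{n+1}$, whence the transition matrix lies in $\mathsf{GL}(e+1,\mathbb Z)$.
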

 
 \begin{proof}
 Passing to the homogeneous correspondents 
 $\tilde x_j, \tilde y_k\in \mathbb Z^{n+1}$
 of these rational points,
 by definition of regularity we have a trivial  
 variant of Steinitz exchange lemma.
 \end{proof}

 \begin{lemma}
[A corollary of  Minkowski convex body theorem]
 \label{lemma:mink}
Given    linearly  independent  
 integer vectors  $p_1,\dots,p_m \in \mathbb R^n$
let 
 \begin{equation}
\label{equation:half-par}
\mathcal P(p_1,\dots,p_m)
=\{x\in \mathbb R^{n}\mid x=\rho_1p_1+\dots+ 
 \rho_mp_m,\,\,\, \,\,0\leq \rho_1,\dots,\rho_m<1\}
 \end{equation}
 be their  half-open parallelepiped.
 Then  $\{p_1,\dots,p_m\}$ is part of a basis of the free
 abelian group $\mathbb Z^n$ iff
the  only integer point in $\mathcal P(p_1,\dots,p_m)$ 
is the origin $0\in \mathbb R^n$. 
  \end{lemma}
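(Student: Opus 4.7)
The strategy is to count, in two ways, the integer points of the half-open parallelepiped $\mathcal P(p_1,\dots,p_m)$ lying in the saturation lattice $L := \mathbb Z^n \cap W$, where $W := \operatorname{span}_{\mathbb R}(p_1,\dots,p_m)$. Since $L$ is a discrete subgroup of $W \cong \mathbb R^m$ that contains $G := \mathbb Z p_1 + \dots + \mathbb Z p_m$ (a free abelian group of rank $m$), $L$ itself is free of rank $m$, and $G \le L$ has finite index.

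For the \emph{easy} direction ($\Rightarrow$), suppose $(p_1,\dots,p_m)$ extends to a $\mathbb Z$-basis $(p_1,\dots,p_n)$ of $\mathbb Z^n$. Any integer point $x \in \mathcal P$ admits a unique expression $x = \sum_{i=1}^n a_i p_i$ with $a_i \in \mathbb Z$; but the presentation $x = \sum_{i=1}^m \rho_i p_i$ with $0 \le \rho_i < 1$ is also unique as an $\mathbb R$-linear combination, by linear independence of $p_1,\dots,p_m$. Comparing coefficients forces $a_i = \rho_i \in \mathbb Z \cap [0,1) = \{0\}$ for $i \le m$ and $a_i = 0$ for $i > m$, hence $x = 0$. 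For the \emph{hard} direction ($\Leftarrow$), I would establish the fundamental-domain identity
\[
\#\bigl(L \cap \mathcal P(p_1,\dots,p_m)\bigr) \;=\; [L:G].
\]
Indeed, every $y \in W$ has a unique real presentation $y = \sum \rho_i p_i$, and $y - \sum \lfloor \rho_i \rfloor p_i$ then lies in $\mathcal P$; moreover, two elements of $L$ lying in the same $G$-coset give rise to the same representative. Hence each of the $[L:G]$ cosets of $G$ in $L$ meets $\mathcal P$ in exactly one point. Since the origin already accounts for one such integer point, the hypothesis forces $[L:G]=1$, i.e.\ $G = L$. Finally, $L$ is a saturated sublattice of $\mathbb Z^n$ (if $kz \in L$ with $k \in \mathbb Z \setminus \{0\}$ and $z \in \mathbb Z^n$, then $z \in W$, so $z \in L$), hence $\mathbb Z^n/L$ is a finitely generated torsion-free abelian group, hence free of rank $n-m$; lifting any basis of $\mathbb Z^n/L$ and appending the $p_i$ yields the desired $\mathbb Z$-basis of $\mathbb Z^n$.

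\textbf{Expected obstacle.} The delicate point is the counting identity $\#(L \cap \mathcal P) = [L:G]$ and the companion fact that $\mathbb Z^n/L$ is torsion-free, both of which underlie why the lemma is advertised as a corollary of Minkowski's convex body theorem: the half-open parallelepiped is the prototypical fundamental domain whose volume equals $\det G$, and ``no nonzero integer point inside'' is exactly the volume-index relation that Minkowski's classical result exploits. Once the fundamental-domain count is in hand, the rest of the argument is bookkeeping within the structure theory of finitely generated abelian groups.
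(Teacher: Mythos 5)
Your proof is correct. Note that the paper itself provides no argument for this lemma: it simply cites Barvinok's \emph{A course in convexity} (VII, Corollary 2.6), so there is no in-paper proof to compare against. Your argument is the standard self-contained one: the easy direction by uniqueness of the integer coordinate expansion with respect to an extended basis, and the hard direction via the fundamental-domain count $\#(L \cap \mathcal P) = [L:G]$ together with the observation that $L = \mathbb Z^n \cap W$ is saturated (so $\mathbb Z^n/L$ is free, allowing the basis of $L=G$ to be extended to a basis of $\mathbb Z^n$). One small remark: although the lemma is advertised as ``a corollary of Minkowski's convex body theorem,'' your proof does not actually invoke Minkowski's theorem at all --- the coset-counting argument is purely combinatorial lattice theory. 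That is perfectly fine (and arguably cleaner), since the statement genuinely does not need the volume estimate of Minkowski's theorem; the paper's label reflects the lineage of the cited reference rather than a logical dependency.
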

  
  \begin{proof}     \cite[VII, Corollary 2.6]{bar}. 
  \end{proof}

$\mathcal P(p_1,\dots,p_m)$ is called the
{\it fundamental parallelepiped of $p_1,\dots, p_m$}
in \cite[\S 7]{hand}.

\bigskip
The following result is a routine consequence of
Lemma \ref{lemma:mink}. 
 We include the proof for later use in 
 Lemma \ref{lemma:mattutino}(iii).

 \begin{lemma}
 \label{lemma:treno} Suppose
 $p\in \mathbb Z^{m+1}$, and   
 $\{b_1,\dots, b_m,q\}$ is a basis of the free abelian group
 $\mathbb Z^{m+1}$. Then
$\{b_1,\dots, b_m,p\}$ is a basis of     $\mathbb Z^{m+1}$
iff $p=\pm q +l$  for some linear combination $l$
of  $b_1,\dots,b_m$ with integer coefficients.
 \end{lemma}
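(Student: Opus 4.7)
The plan is to reduce both implications to the elementary fact that a square integer matrix represents an automorphism of the free abelian group $\mathbb{Z}^{m+1}$ if and only if its determinant is $\pm 1$. So my argument will consist of writing down, in each direction, the change-of-basis matrix between $\{b_1,\dots,b_m,q\}$ and $\{b_1,\dots,b_m,p\}$ and computing its determinant by inspection.

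For the ``if'' direction, I would assume $p=\varepsilon q+l$ with $\varepsilon\in\{+1,-1\}$ and $l=\sum_{i=1}^m \alpha_i b_i$, $\alpha_i\in\mathbb{Z}$. Expressing $b_1,\dots,b_m,p$ in the basis $\{b_1,\dots,b_m,q\}$ produces a matrix that is upper triangular with diagonal $(1,\dots,1,\varepsilon)$ and last column $(\alpha_1,\dots,\alpha_m,\varepsilon)^T$; its determinant is $\varepsilon=\pm 1$, so the transformation is a $\mathbb{Z}$-module automorphism of $\mathbb{Z}^{m+1}$ and $\{b_1,\dots,b_m,p\}$ is a basis.

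For the converse, I would use the assumption that $\{b_1,\dots,b_m,q\}$ is a basis to write $p$ uniquely as $p=\sum_{i=1}^m \alpha_i b_i+cq$ with $\alpha_i,c\in\mathbb{Z}$. The change-of-basis matrix from $\{b_1,\dots,b_m,q\}$ to $\{b_1,\dots,b_m,p\}$ is again upper triangular with diagonal $(1,\dots,1,c)$ and hence has determinant $c$. Since by hypothesis $\{b_1,\dots,b_m,p\}$ is also a $\mathbb{Z}$-basis of $\mathbb{Z}^{m+1}$, this matrix is unimodular, forcing $c=\pm 1$. Setting $l=\sum_{i=1}^m\alpha_i b_i$ then gives $p=\pm q+l$, as required.

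There is no substantive obstacle; the only thing to keep straight is that ``basis'' means $\mathbb{Z}$-basis of the free abelian group $\mathbb{Z}^{m+1}$, so that the admissible base changes are exactly those with determinant $\pm 1$. As an alternative one could bypass determinants and invoke Lemma \ref{lemma:mink}: if $|c|\geq 2$, a suitable $\mathbb{Z}$-linear combination of $q$ and the $b_i$ would be a nonzero integer point of $\mathcal{P}(b_1,\dots,b_m,p)$, contradicting the hypothesis that $\{b_1,\dots,b_m,p\}$ is a basis. I prefer the determinant route, since it handles both directions with one and the same computation.
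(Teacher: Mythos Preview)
Your argument is correct and is in fact the cleanest way to handle this lemma: the change-of-basis matrix between $\{b_1,\dots,b_m,q\}$ and $\{b_1,\dots,b_m,p\}$ is triangular with diagonal $(1,\dots,1,c)$, and unimodularity forces $c=\pm 1$. Both directions fall out of one computation, exactly as you say.

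The paper, however, deliberately takes a different route. It argues the $(\Rightarrow)$ direction geometrically via Lemma~\ref{lemma:mink}: letting $H=\mathbb{R}b_1+\dots+\mathbb{R}b_m$ and $H^{\pm}$ the two affine hyperplanes parallel to $H$ through $\pm q$, equality of the volumes of $\mathcal{P}(b_1,\dots,b_m,p)$ and $\mathcal{P}(b_1,\dots,b_m,q)$ forces $p\in H^{+}\cup H^{-}$; if $p$ were not of the form $\pm q+l$, a suitable translate of $\mathcal{P}(b_1,\dots,b_m)$ would contain a nonzero integer point, contradicting Lemma~\ref{lemma:mink}. This is essentially the alternative you sketch at the end. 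The point is not that this is easier---it is not---but that the geometric picture (the set of admissible $p$'s lies on two affine hyperplanes parallel to $H$) is reused verbatim in the proof of Lemma~\ref{lemma:mattutino}(iii), where one needs that $\mathcal{L}_{HK}$ is contained in a rational half-line parallel to $H$. The paper flags this explicitly (``We include the proof for later use in Lemma~\ref{lemma:mattutino}(iii)''). Your determinant proof establishes the lemma more quickly, but it does not surface this geometric fact, so if you adopt it you will need to supply that observation separately when you reach Lemma~\ref{lemma:mattutino}(iii).
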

 \begin{proof}
 $(\Rightarrow)$
Let  us denote by $[b_1,\dots,b_m,p]$  the unimodular matrix
with  columns   vectors  
$b_1,\dots,b_m, \,p$.
 Let $H=\mathbb Rb_1+\dots
 + \mathbb Rb_m$ be the linear span of the vectors
 $b_1,\dots,b_m$ in $\mathbb R^{m+1}.$ Let $H^\pm$  
 be the affine hyperplanes in $\mathbb R^{m+1}$
 parallel to $H$ and containing  $\pm q$ respectively.
 By way of contradiction,  suppose 
 $p$ does not coincide with
 $\pm q +l$ for any     a linear combination $l$ 
of  $b_1,\dots,b_m$ with integer coefficients.
Since $|\det[b_1,\dots,b_m,p]|= 1=$ volume of 
$\mathcal P(b_1,\dots,b_m,p)=$
volume of  $\mathcal P(b_1,\dots,b_m,q)$, 
 then  $p\in H^+$ or $p\in H^-$, 
  say  $p\in H^+.$ Then for some linear combination 
$l'$ of $b_1,\dots,b_m$ with integer coefficients,
the translated half-open parallelepiped  
$q+l'+\mathcal P(b_1,\dots,b_m)$
contains the integer point $p\not= q+l'.$ So
$\mathcal P(b_1,\dots,b_m)$ contains the nonzero
integer point  $p-q-l'.$ By Lemma \ref{lemma:mink},
this contradicts the fact that $\{b_1,\dots,b_m\}$ is part of
a basis of   $\mathbb Z^{m+1}$.
The  $(\Leftarrow)$ direction is trivial.
 \end{proof}
%
%

\subsection*{The invariants   $d_F$ and $c_F$}
\label{definition:dc}
For  every  rational affine space $F\subseteq \mathbb R^n$
 we set 
$$d_F=\min\{\den(v)\mid v\in  F\cap  \mathbb Q^n\}.$$
Next suppose  $F$  is 
 $e$-dimensional.  If   $0\leq e <n$
 we define the integer  
 $c_F>0$
   as the
 least possible denominator $\den(v)$  of
 a rational point $v\in \mathbb Q^n$
  such that there are points 
  $v_0,\ldots, v_e\in F\cap  \mathbb Q^n$
 making $\conv(v,v_0,\ldots,v_e)$ a regular
 $(e+1)$-simplex.
%
If $e=n$ we define  $c_F=1$.

\begin{lemma}
\label{lemma:cod1}
Let $F\subseteq \mathbb R^n$ be a  rational affine space. 
  If $\dim(F)\not=n-1$,  $c_F=1$. 
 If $\dim(F)=n-1$, then  $1\leq c_F\leq 
 \max(1,d_F/2)$ and $\gcd(c_F,d_F)=1$, where
 {\rm ``gcd''} denotes greatest common divisor. 

\end{lemma}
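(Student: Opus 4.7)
My plan is to lift everything to the homogeneous lattice $\mathbb Z^{n+1}$, and reduce the estimate to a Bezout/CRT calculation.

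As a starting frame I would fix a regular $e$-simplex $\conv(v_0,\dots,v_e)\subseteq F$; such a simplex exists because the linear span in $\mathbb R^{n+1}$ of the homogeneous correspondents of $F\cap\mathbb Q^n$ meets $\mathbb Z^{n+1}$ in a rank-$(e+1)$ direct summand (the ambient quotient is torsion-free), and any $\mathbb Z$-basis of that sublattice can be sign-adjusted and row-reduced to consist of primitive integer vectors with positive $(n+1)$th coordinate. Then $\{\tilde v_0,\dots,\tilde v_e\}$ is a $\mathbb Z$-basis of the sublattice, the image of the $(n+1)$th coordinate projection is $\gcd(d_{v_0},\dots,d_{v_e})\,\mathbb Z$, and a primitivization argument identifies this image with $d_F\mathbb Z$, giving $d_F=\gcd(d_{v_0},\dots,d_{v_e})$. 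Completing to a basis $\{\tilde v_0,\dots,\tilde v_e,b_{e+1},\dots,b_n\}$ of $\mathbb Z^{n+1}$, with $c_j$ the $(n+1)$th coordinate of $b_j$, the basis property gives $\gcd(d_F,c_{e+1},\dots,c_n)=1$.

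Writing a candidate vertex as $\tilde v=\sum_i\gamma_i\tilde v_i+\sum_j\delta_jb_j$, the $(e+1)$-simplex $\conv(v,v_0,\dots,v_e)$ is regular iff $\gcd(\delta_{e+1},\dots,\delta_n)=1$, in which case $\tilde v$ is automatically primitive and $\den(v)=\sum_i\gamma_id_{v_i}+\sum_j\delta_jc_j$ (positive). The problem thus reduces to minimising this integer over admissible $(\gamma_i,\delta_j)$.

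The case $\dim(F)=n$ is by definition. If $\dim(F)<n-1$, so $n-e\geq 2$, I would achieve value $1$ as follows: pick any Bezout solution of $\sum\gamma_id_{v_i}+\sum\delta_jc_j=1$ and let $g'=\gcd(\delta_{e+2},\dots,\delta_n)$. For a prime $p\mid g'$ with $p\mid d_F$, the Bezout identity forces $p\nmid\delta_{e+1}$ (else the left-hand side is $\equiv 0\pmod p$); for $p\mid g'$ with $p\nmid d_F$, the shift $\delta_{e+1}\mapsto\delta_{e+1}+d_Fm$ together with a compensating change in the $\gamma_i$'s (available since $d_F=\gcd(d_{v_i})$) can be used to prevent $p\mid \delta_{e+1}+d_Fm$. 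Applying CRT over the finitely many primes of the second kind yields primitive $(\delta_j)$ with value $1$, hence $c_F=1$.

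When $\dim(F)=n-1$ there is a single $b$ with $\gcd(c,d_F)=1$; primitivity forces $\delta=\pm 1$, and the achievable positive values of $\sum_i\gamma_id_{v_i}\pm c$ form $(d_F\mathbb Z+c)\cup(d_F\mathbb Z-c)$, whose minimum is $1$ if $d_F=1$ and $\min(c',d_F-c')\leq d_F/2$ otherwise, with $c'\equiv c\pmod{d_F}$, $c'\in\{1,\dots,d_F-1\}$; this yields $c_F\leq\max(1,d_F/2)$. Moreover $\gcd(c_F,d_F)=1$: since $\{\tilde v,\tilde v_0,\dots,\tilde v_e\}$ has $n+1$ elements and extends to a basis of $\mathbb Z^{n+1}$, it is itself a basis, so its last coordinates $\den(v),d_{v_0},\dots,d_{v_e}$ generate $\mathbb Z$, forcing $\gcd(\den(v),d_F)=1$. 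The delicate step is the CRT manipulation when $\dim(F)<n-1$; everything else is elementary modular arithmetic.
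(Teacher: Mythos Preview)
The paper does not prove this lemma at all: it simply cites \cite[Lemma 9]{cabmun-etds}. Your proposal is therefore a genuine, self-contained argument where the paper offers none, and the homogeneous-lattice/Bezout framework you set up is the natural one. The reduction is sound: once a regular $e$-simplex $\conv(v_0,\dots,v_e)\subseteq F$ is fixed, Lemma~\ref{lemma:steinitz} guarantees that $c_F$ equals the minimum of $\den(v)$ over all $v$ with $\conv(v,v_0,\dots,v_e)$ regular (you use this implicitly but should say so), and your coordinate description of that regularity condition as $\gcd(\delta_{e+1},\dots,\delta_n)=1$ is correct.

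There is one small but genuine slip in the CRT step for $\dim(F)<n-1$. You set $g'=\gcd(\delta_{e+2},\dots,\delta_n)$ and then run CRT over the ``finitely many primes of the second kind'' dividing $g'$. If the initial Bezout solution happens to have $\delta_{e+2}=\cdots=\delta_n=0$, then $g'=0$ and every prime divides it; your dichotomy collapses and the shift $\delta_{e+1}\mapsto\delta_{e+1}+d_Fm$ would need $\delta_{e+1}\equiv\pm1\pmod{d_F}$, which is not guaranteed. The fix is one line: before running the argument, replace $\delta_{e+2}$ by $\delta_{e+2}+d_F$ (with the corresponding compensation in the $\gamma_i$'s), forcing $g'>0$. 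With that adjustment the CRT goes through and the proof is complete.
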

\begin{proof}
\cite[Lemma 9]{cabmun-etds}.
\end{proof}

The following theorem states that the triplet
 $(\dim(F),d_F,c_F)$ is a complete invariant for
 the rational affine space $F$ in  $\GLnZ$-geometry:

\begin{theorem}
[Rational affine spaces in $\GLnZ$-geometry] 
\label{theorem:classification}
Let \,\,$F$\, and $G$\,  be rational affine spaces
in  $\mathbb R^n$. Then\,\,
$F$\,\,and \,\,$G$\,\, 
have the same $\GLnZ$-orbit 
 iff  $(\dim(F),d_F,c_F)=(\dim(G),d_G,c_G)$.
\end{theorem}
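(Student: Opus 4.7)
For the forward direction, each $\gamma = (A, b) \in \GLnZ$ lifts via homogeneous correspondents to the block matrix $\bigl(\begin{smallmatrix} A & b \\ 0 & 1 \end{smallmatrix}\bigr) \in \mathsf{GL}(n+1, \mathbb{Z})$, which acts on $\mathbb{Z}^{n+1}$ by preserving the last coordinate and by sending $\mathbb{Z}$-bases to $\mathbb{Z}$-bases. Thus denominators, affine dimension, and regular simplices are all preserved, so the triple $(\dim F, d_F, c_F)$ is an orbit invariant.

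For the converse, assume $(\dim F, d_F, c_F) = (e, d, c) = (\dim G, d_G, c_G)$. My strategy is to produce, in each of $F$ and $G$, a regular $n$-simplex of $\mathbb{R}^n$ whose vertices share a common canonical denominator sequence; the $\mathsf{GL}(n+1, \mathbb{Z})$-matrix sending corresponding homogeneous correspondents will then preserve the last coordinate (since the two denominator sequences agree), have last row $(0,\dots,0,1)$, and hence correspond to some $\gamma \in \GLnZ$ carrying $F$ onto $G$. To build such a simplex for $F$, pick $v_0 \in F$ with $\den(v_0) = d$ and a $\mathbb{Z}$-basis $w_1, \dots, w_e$ of the primitive sublattice $(F - v_0) \cap \mathbb{Z}^n$; then $p_i := v_0 + w_i/d$ has $\den(p_i) = d$ (since $d_F = d$ forces every denominator in $F$ to be a multiple of $d$, an immediate consequence of the lattice structure of $L_F$, the primitive rank-$(e+1)$ sublattice of $\mathbb{Z}^{n+1}$ spanned by homogeneous correspondents of rational points of $F$), and a direct computation shows $\{\tilde v_0, \tilde p_1, \dots, \tilde p_e\}$ is a $\mathbb{Z}$-basis of $L_F$. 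Hence $\conv(v_0, p_1, \dots, p_e)$ is a regular $e$-simplex in $F$. If $e < n$, invoke the definition of $c_F$ to get $w$ with $\den(w) = c$ and a regular $(e+1)$-simplex $\conv(w, v_0', \dots, v_e')$ with $v_0', \dots, v_e' \in F$; Lemma~\ref{lemma:steinitz} applied to the two $F$-spanning regular $e$-simplices $\conv(v_0', \dots, v_e')$ and $\conv(v_0, p_1, \dots, p_e)$ sitting inside this $(e+1)$-simplex then shows $\conv(v_0, p_1, \dots, p_e, w)$ is itself a regular $(e+1)$-simplex.

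To complete the construction, extend $\{\tilde v_0, \tilde p_1, \dots, \tilde p_e, \tilde w\}$ to a $\mathbb{Z}$-basis of $\mathbb{Z}^{n+1}$, and use Lemma~\ref{lemma:treno} iteratively to modify each added basis vector by an integer combination of the others so that its last coordinate becomes $1$ — feasible precisely because the last coordinates $d$ and $c$ of $\tilde v_0$ and $\tilde w$ are coprime by Lemma~\ref{lemma:cod1}. The resulting regular $n$-simplex $\Sigma_F = \conv(p_0, \dots, p_n)$ has the canonical denominator sequence $(d, d, \dots, d, c, 1, \dots, 1)$, depending only on $(e, d, c)$. Repeating the construction for $G$ yields $\Sigma_G$ with the identical sequence, and matching homogeneous correspondents delivers the required $\gamma$. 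The principal technical point is this denominator-bookkeeping step, which works uniformly thanks to the coprimality $\gcd(c,d) = 1$ from Lemma~\ref{lemma:cod1}.
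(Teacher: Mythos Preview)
Your argument is correct. The paper itself does not prove this theorem but defers to \cite[Theorem~8]{cabmun-etds}, so your write-up supplies what the paper omits.

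Your route is essentially the non-algorithmic core of the paper's later Theorem~\ref{theorem:dddccc} and Corollary~\ref{corollary:affine-subspace}: build in each space a regular $n$-simplex with the canonical denominator profile $(d,\dots,d,c,1,\dots,1)$ and then match via Lemma~\ref{lemma:affinemap}. The paper arrives at the same simplex through desingularization and an explicit denominator-lowering induction (because it wants Turing computability), whereas you go straight to a $\mathbb{Z}$-basis of the direction lattice $(F-v_0)\cap\mathbb{Z}^n$ and lift; this is cleaner for existence but of course loses the algorithmic content. Two small points worth making explicit in a final write-up: (i) the claim that every denominator in $F$ is a multiple of $d_F$ is exactly the statement that the last-coordinate projection $L_F\to\mathbb{Z}$ has image $d_F\mathbb{Z}$, which you gesture at but should state; (ii) in the extension step, when $e<n-1$ one has $c_F=1$ by Lemma~\ref{lemma:cod1}, so adjusting the extra basis vectors to last coordinate $1$ needs only $\tilde w$, while when $e=n-1$ no extra vectors are needed at all --- so the appeal to $\gcd(c,d)=1$ is never actually used to combine \emph{both} $\tilde v_0$ and $\tilde w$, though it does no harm.
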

\begin{proof}
\cite[Theorem 8]{cabmun-etds}.
\end{proof}

 \smallskip
\section{The computation of   $(\dim(F),d_F,c_F)$ for
$F$ an affine rational space}
\label{section:cd}

\begin{lemma} 
\label{lemma:affinemap}
There exists a Turing machine  $\mathcal T$ with the following property:
For any two  $(n+1)$-tuples
$V=(v_{0},\ldots,v_{n})$ and
$W=(w_{0},\ldots,w_{n})$    of rational points
in $\mathbb R^n$ with  $\den(v_{i})=\den(w_{i}), \,\,(i=0,\ldots,n)$,
$\mathcal T$   decides whether  
 both $\conv(v_{0},\ldots,v_{n})$ and
$\conv(w_{0},\ldots,w_{n})$ 
are regular
$n$-simplexes in $\mathbb R^n$  and,  if this is the case, 
computes  the uniquely determined map  
$\gamma=\phi_{VW}\in\GLnZ$ such that 
$\gamma(v_i)=w_i$ for each $i\in\{0,\ldots, n\}$.
\end{lemma}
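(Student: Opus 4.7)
The plan is to reduce everything to linear algebra on the homogeneous correspondents. An element $\gamma\in\GLnZ$ of the form $\gamma(x)=Ax+b$ with $A\in\mathsf{GL}(n,\mathbb Z)$ and $b\in\mathbb Z^n$ lifts to the $(n+1)\times(n+1)$ integer matrix
$$
\widetilde\gamma \;=\; \begin{pmatrix} A & b \\ 0 & 1\end{pmatrix}\in \mathsf{GL}(n+1,\mathbb Z),
$$
and a direct check shows that $\widetilde\gamma\cdot \tilde v=\widetilde{\gamma(v)}$ for every rational $v\in\mathbb R^n$ (the denominator is preserved because $\det A=\pm1$ is coprime to $\den(v)$). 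Conversely, every $\widetilde\gamma\in\mathsf{GL}(n+1,\mathbb Z)$ whose last row is $(0,\ldots,0,1)$ is the lift of a unique $\gamma\in\GLnZ$.

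The machine $\mathcal T$ first computes the homogeneous correspondents $\tilde v_0,\ldots,\tilde v_n$ and $\tilde w_0,\ldots,\tilde w_n\in\mathbb Z^{n+1}$ and assembles them as columns of two integer matrices $M_V$ and $M_W$. By the definition of Farey regularity, $\conv(v_0,\ldots,v_n)$ is a regular $n$-simplex iff $\{\tilde v_0,\ldots,\tilde v_n\}$ is a $\mathbb Z$-basis of $\mathbb Z^{n+1}$, iff $\det M_V=\pm1$; likewise for $W$. Both tests are finite integer computations, so $\mathcal T$ decides regularity; if either fails, it outputs ``not regular''.

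Assuming both determinants are $\pm1$, $M_V$ and $M_W$ are unimodular, so $\widetilde\gamma:=M_W\,M_V^{-1}$ is a well-defined element of $\mathsf{GL}(n+1,\mathbb Z)$ computable by standard (rational) matrix inversion followed by integer clearing. By construction $\widetilde\gamma\,\tilde v_i=\tilde w_i$ for every $i$. The hypothesis $\den(v_i)=\den(w_i)$ forces the last rows of $M_V$ and $M_W$ to coincide, and since the columns of $M_V$ are a basis this in turn forces the last row of $\widetilde\gamma$ to be $(0,\ldots,0,1)$. Hence $\widetilde\gamma$ has the block form above; $\mathcal T$ reads off $A$ and $b$, and outputs $\gamma=\phi_{VW}:x\mapsto Ax+b$. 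Uniqueness is automatic: any two candidates in $\GLnZ$ would agree on the affinely independent set $\{v_0,\ldots,v_n\}$ and hence coincide as affine maps.

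There is no real obstacle: the only point demanding attention is verifying that the last row of $\widetilde\gamma$ is forced to be $(0,\ldots,0,1)$ under the denominator hypothesis, and this is immediate from linear independence of the columns of $M_V$ together with the equality of bottom rows of $M_V$ and $M_W$. All remaining operations (computing $\tilde v_i$, evaluating a determinant, inverting a unimodular integer matrix, and multiplying integer matrices) are classical Turing-computable procedures.
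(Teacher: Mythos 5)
Your argument is correct, and it is more self-contained than the paper's. The paper's proof invokes Lemma~\ref{lemma:mink} (the half-open parallelepiped criterion from Minkowski's theorem) to decide regularity and then defers to an external reference, \cite[Lemma~1]{cabmun-etds}, for the actual construction of $\gamma$. You instead observe that for the top-dimensional case regularity of $\conv(v_0,\ldots,v_n)$ is simply the condition $\det M_V=\pm1$, a single integer determinant evaluation, and you produce $\gamma$ explicitly as $M_W M_V^{-1}$, noting that the equality of last rows of $M_V$ and $M_W$ (forced by $\den(v_i)=\den(w_i)$) together with invertibility of $M_V$ pins the last row of $\widetilde\gamma$ to $(0,\ldots,0,1)$, so that $\widetilde\gamma$ descends to an affine map. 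That last-row argument is the only step requiring care and you have it right; uniqueness also follows as you say. The determinant test buys simplicity here precisely because the simplex is full-dimensional (for an $e$-simplex with $e<n$ one would be back to the parallelepiped or Hermite/Smith normal form criteria, which is why the paper states Lemma~\ref{lemma:mink} in that generality). One small expository nit: the parenthetical justification that denominators are preserved ``because $\det A=\pm1$ is coprime to $\den(v)$'' is not quite the reason; the clean reason is that a unimodular integer matrix sends primitive integer vectors to primitive integer vectors, so $\widetilde\gamma\,\tilde v$ is automatically the homogeneous correspondent of $\gamma(v)$. This does not affect the correctness of your argument.
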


\begin{proof}
Lemma \ref{lemma:mink} 
yields a decision procedure to check whether 
 $\conv(v_{0},\ldots,v_{n})$ is regular. 
If this is the case, the proof of   \cite[Lemma 1]{cabmun-etds}
yields an effective procedure to
compute the desired  map  $\gamma$.  
\end{proof}

\begin{lemma}
\label{lemma:ddd}  
There is a Turing machine which, given a rational  affine 
space  $F=\aff(a_1,\dots,a_m)\subseteq \mathbb R^n,
\,\,(m \mbox{ arbitrary, each }  a_i\in \mathbb Q^n)$
together with a rational  point $v_0\in F$ with 
$\den(v_0)=d_F\,,$ first 
computes the integer $e=\dim(F)$ and then outputs 
 points  $v_1\ldots,v_e\in  \mathbb Q^n \cap F$ with 
 $\den(v_1)=\dots=\den(v_e)=d_F$ such that
 $\conv(v_0,v_1,\dots,v_e)$  
is a regular $e$-simplex.  
\end{lemma}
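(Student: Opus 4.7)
My plan is to reduce the construction to standard integer linear algebra on the lattice $L:=(F-v_0)\cap\mathbb Z^n$, and then to verify regularity via a saturation argument powered by the minimality of $d_F$. The algorithm will proceed in three stages: (i) compute $e=\dim F$ as the rank of the matrix with columns $a_1-a_m,\ldots,a_{m-1}-a_m$, by Gaussian elimination over $\mathbb Q$; (ii) starting from a rational basis of the linear subspace $F-v_0$, clear denominators to obtain integer spanning vectors and apply a Hermite/Smith normal form routine to produce a $\mathbb Z$-basis $\ell_1,\ldots,\ell_e$ of $L$; (iii) output $v_i:=v_0+\ell_i/d_F$ for $i=1,\ldots,e$. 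Each stage is Turing-implementable by classical procedures.

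I then check the three required properties. Containment $v_i\in F$ is immediate, since $\ell_i$ lies in the linear subspace $F-v_0$, which is closed under scalar multiplication, so $v_i\in v_0+(F-v_0)=F$. From $d_F v_i=d_F v_0+\ell_i\in\mathbb Z^n$ we get $\den(v_i)\mid d_F$, and the minimality of $d_F$ forces $\den(v_i)=d_F$; in particular each $\tilde v_i$ has last coordinate $d_F$. The heart of the argument is the regularity of $\conv(v_0,\ldots,v_e)$, which amounts to the identity
\[
\Lambda_0:=\mathbb Z\tilde v_0+\mathbb Z(\ell_1,0)+\cdots+\mathbb Z(\ell_e,0)\;=\;\mathbb Z^{n+1}\cap\mathrm{span}_{\mathbb Q}(\tilde v_0,\ldots,\tilde v_e)=:\Lambda,
\]
since $\Lambda$ is saturated in $\mathbb Z^{n+1}$ by construction, so any $\mathbb Z$-basis of $\Lambda$ extends to a basis of $\mathbb Z^{n+1}$.

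To prove $\Lambda_0=\Lambda$ I compare images and kernel-intersections of the two lattices under the projection $\pi:\mathbb Z^{n+1}\to\mathbb Z$ onto the last coordinate. The horizontal slice $x_{n+1}=0$ of the $\mathbb Q$-span equals $(F-v_0)\times\{0\}$, so both lattices meet $\ker\pi$ in exactly $L\times\{0\}$. Moreover $\pi(\Lambda_0)=d_F\mathbb Z$; and any $(z,k)\in\Lambda$ with $k\geq 1$ admits a decomposition $(z,k)=(k/d_F)\tilde v_0+(y,0)$ with $y\in F-v_0$, which yields $z/k=v_0+y/k\in F\cap\mathbb Q^n$ with $\den(z/k)\mid k$, so minimality of $d_F$ forces $k\geq d_F$ and hence $\pi(\Lambda)=d_F\mathbb Z=\pi(\Lambda_0)$. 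Equality of images and of kernel-intersections, together with the evident inclusion $\Lambda_0\subseteq\Lambda$, forces $\Lambda_0=\Lambda$. The one delicate step is this saturation identity, and it rests entirely on the minimality of $d_F$; everything else is routine integer linear algebra.
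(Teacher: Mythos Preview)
Your proof is correct and takes a genuinely different route from the paper. The paper proceeds geometrically: it first picks an arbitrary rational $e$-simplex in $F$ with vertex $v_0$, lifts to a cone in $\mathbb R^{n+1}$, and invokes the toric desingularization procedure (Ewald, Fulton) to produce a regular $e$-simplex $T_0=\conv(v_0,w_1,\ldots,w_e)\subseteq F$; it then iteratively replaces each vertex $w_j$ with $\den(w_j)>d_F$ by the point corresponding to $\tilde w_j-\tilde v_0$, driving all denominators down to $d_F$. Your argument bypasses both the desingularization black box and the iterative reduction: you compute a $\mathbb Z$-basis of the lattice $L=(F-v_0)\cap\mathbb Z^n$ directly via Smith/Hermite normal form and write down the vertices $v_i=v_0+\ell_i/d_F$ in one shot, verifying regularity by the clean saturation argument $\Lambda_0=\Lambda$ (which isolates exactly where the minimality of $d_F$ enters, namely in showing $\pi(\Lambda)=d_F\mathbb Z$). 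Your approach is more elementary and self-contained; the paper's approach, while heavier, stays within the regular-simplex/desingularization toolkit used throughout the rest of the article and makes the link to the fan machinery explicit. One small presentational point: when you ``clear denominators'' you obtain only a finite-index sublattice of $L$, so you should state explicitly that the Smith normal form step is computing the \emph{saturation} of that sublattice in $\mathbb Z^n$ to recover $L$ itself; this is standard and Turing-computable, but worth naming.
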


\begin{proof}  The dimension $e$ of $F$ is immediately computed from the input rational points  $a_i$. We can easily pick 
rational points  $r_1,\dots,r_e\in F$ 
such that the set $R=\conv(v_0,r_1,\dots,r_e)$ is an
  $e$-simplex. With  the notation of \eqref{equation:cone}, let  
the set  $\mathcal R \subseteq \mathbb R^{n+1}$  be defined by 
 $$
 \mathcal R 
 = {\rm pos}[\tilde v_0,\tilde r_1,...,\tilde r_e].
 $$
%
%
%
%
%
The { desingularization procedure
\cite[VI, 8.5]{ewa}, \cite[p.48]{ful}} yields a complex  $\Phi$ of 
rational polyhedral cones  (for short, a  {\it fan})  in $\mathbb R^{n+1}$ such that
each $(e+1)$-dimensional cone   $C\in \Phi$
has the form $C={\rm pos}[\tilde q_0,\tilde q_1,\dots,\tilde q_e]$
for a suitable set 
$\{\tilde q_0,\tilde q_1,\dots,\tilde q_e\}$ of primitive integer vectors
 which is part of a basis of the free abelian group $\mathbb Z^{n+1}.$
 $\Phi$ is known as  a {\it regular (or nonsingular) fan} 
providing  a {\it desingularization} of $\mathcal R.$
$\Phi$ is computable by a Turing machine over input $v_0, r_1,\dots,r_e$.
The  rational points
 $q_0, q_1,\dots, q_e\in F$ are the vertices of a regular complex 
 $\Delta$ with support  $R$.  Thus in particular
$\Delta$ contains  a regular 
 $e$-simplex
$T_0=\conv(v_0,w_1,\ldots,w_e)$ having $v_0$ among its vertices.
By construction, the
set  $\{\widetilde{v}_0,\widetilde{w}_1,\ldots,\widetilde{w}_e\}$
is part of a basis of the free abelian group $\mathbb Z^{n+1}.$

If $\den(w_i)=d_F$ for all $i=1,\dots,e$ we are done.
Otherwise, let $j$ be the smallest index in  $ \{1,\dots,e\}$ 
such that $\den(w_j)>\den(v_0)$.
Then the integer vector $\widetilde{w}_j-\widetilde{v}_0$ is primitive, because
replacing $\widetilde{w}_j$ by $\widetilde{w}_j-\widetilde{v}_0$ in the set  
$\{\widetilde{v}_0,\widetilde{w}_1,\ldots,\widetilde{w}_e\}$
we obtain a part of a basis of $\mathbb Z^{n+1}$. 
 %
 %
 %
 So let the rational point $w_{j1}$ be  defined by 
 $\widetilde{w}_{j1}=\widetilde{w}_j-\widetilde{v}_0.$ Since both 
 $w_j$ and $v_0$ lie in $F$, then so does $w_{j1}.$  
 Further, the $e$-simplex
 $T_{j1}=\conv(v_0,w_2,\dots,w_{j1},\dots,w_e)\subseteq F$ is regular,
 and $\den(w_j)>\den(w_{j1})\geq \den(v_0).$ 
 Inductively, we have regular $e$-simplexes
 $T_{j1}, T_{j2},\ldots\subseteq F$  with 
 $$
 \den(w_j)>\den(w_{j1})>\den(w_{j2})>\dots \geq
 \den(v_0),\,\,\,
 \tilde w_{jt}=\tilde w_{jt-1}-\tilde v_0.
 $$
  After a finite number  $s=s_j\geq 0$ of steps we will  have
 $\den(w_{js})\leq \den(v_0)$, whence  
 $$\den(w_{js}) = \den(v_0) = d_F,$$
  by the assumed
 minimality property 
 of $\den(v_0)$.
 We then set 
 $
 v_j=w_{js}\,\,\, \mbox{  and  } \,\,   T_1=T_{js}\,,
 $
 and note that  $T_1\subseteq F$ is the
  regular $e$-simplex 
   obtained from $T_0$ replacing $w_j$ by the new vertex
 $v_j\in F$.  
Assuming  inductively  that  $T_{r+1}$ is obtained
in a similar way  by
  replacing a vertex of $T_r$ by a new vertex lying in $F$
  with denominator equal to $d_F$,     
the procedure will finally output the desired regular 
$e$-simplex
 $ \conv(v_0,\dots,v_e)\subseteq F$
   with $\den(v_0)=\dots=
 \den(v_e)=d_F\,.$
  The    computability of
 the map $(a_1,\dots,a_j)\mapsto(v_0,\dots,v_e)$
 is clear. 
\end{proof}

\begin{theorem}
\label{theorem:dddccc}
Let $F\subseteq \mathbb R^n$
 be an $e$-dimensional rational affine space
$(e=0,\dots,n)$. Then  there are rational points 
$v_0,\ldots,v_n\in F$ such that
\begin{itemize}
\item[(i)]  $\den(v_i)=d_F$\,\, for all \,\,$i\in\{0,\dots,e\}$;
\item[(ii)] $\den(v_i)=c_F$\,\, for all\,\, $i\in\{e+1,\dots,n\}$;
\item[(iii)] $\conv(v_0,\ldots,v_n)$ is a regular $n$-simplex.
\end{itemize}
Moreover, once  $F$ is presented as\,\, $\aff(a_0,\dots,a_e)$ for
some $a_0,\dots,a_e\in  \mathbb Q^n$,   the points 
 $v_0,\dots,v_e$ can be computed by a Turing machine.
\end{theorem}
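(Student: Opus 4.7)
The plan is to build the $v_i$'s in two batches, mirroring the roles of $d_F$ and $c_F$. First I would produce $v_0,\dots,v_e \in F$ of denominator $d_F$ with $\conv(v_0,\dots,v_e)$ a regular $e$-simplex; then, assuming $e<n$, attach $n-e$ further vertices of denominator $c_F$ that turn the configuration into a regular $n$-simplex of $\mathbb R^n$. Only the first batch is claimed to be Turing-computable, so I would treat computability and existence separately.

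For the computable first batch, the existence of $v_0 \in F$ with $\den(v_0) = d_F$ is immediate from the definition of $d_F$; to extract it algorithmically I would pass to homogeneous correspondents and consider the rank-$(e+1)$ sublattice $L := \mathbb Z^{n+1} \cap (\mathbb R\tilde a_0 + \dots + \mathbb R\tilde a_e)$, whose primitive vectors with positive last coordinate are exactly the $\tilde x$ for $x \in F \cap \mathbb Q^n$. A Hermite normal form computation on $L$ exposes the least positive value of the last coordinate attained on $L$ --- this value is $d_F$ by definition --- together with an explicit integer witness whose affine correspondent is the desired $v_0$. Feeding $F$ and this $v_0$ into Lemma \ref{lemma:ddd} then outputs $v_1,\dots,v_e \in F$ with $\den(v_i) = d_F$ and $\conv(v_0,\dots,v_e)$ regular, settling (i) and the \emph{moreover} clause (and also (iii) outright in the trivial case $e = n$, when (ii) is vacuous).

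For the existential extension with $e < n$, by the very definition of $c_F$ there exist $v \in \mathbb Q^n$ with $\den(v) = c_F$ and $w_0,\dots,w_e \in F \cap \mathbb Q^n$ such that $\conv(v,w_0,\dots,w_e)$ is a regular $(e+1)$-simplex. Since $\aff(w_0,\dots,w_e) = F = \aff(v_0,\dots,v_e)$, Lemma \ref{lemma:steinitz} (applied with $m = e+1$ and $x_{e+1} = v$) lets me swap the $w$'s for the $v$'s, yielding that $\conv(v_0,\dots,v_e,v)$ is itself a regular $(e+1)$-simplex; set $v_{e+1} := v$. Then $\{\tilde v_0,\dots,\tilde v_{e+1}\}$ is part of a basis of $\mathbb Z^{n+1}$, so I extend it to a full basis $\{\tilde v_0,\dots,\tilde v_{e+1},u_{e+2},\dots,u_n\}$. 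For each $j \in \{e+2,\dots,n\}$, writing $d_j$ for the last coordinate of $u_j$, I replace $u_j$ by $u_j' := u_j + \alpha_j \tilde v_0 + \beta_j \tilde v_{e+1}$, where $\alpha_j,\beta_j \in \mathbb Z$ solve $\alpha_j d_F + \beta_j c_F = c_F - d_j$. Such integers exist because $\gcd(d_F,c_F) = 1$ (explicit in Lemma \ref{lemma:cod1} in the codimension-one case, and automatic otherwise since then $c_F = 1$). The resulting change of basis is block-upper-triangular with unit diagonal, hence unimodular, so $\{\tilde v_0,\dots,\tilde v_{e+1},u_{e+2}',\dots,u_n'\}$ remains a basis of $\mathbb Z^{n+1}$. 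Each $u_j'$ is primitive (as a basis element) and has positive last coordinate $c_F$, so is the homogeneous correspondent of a rational point $v_j$ with $\den(v_j) = c_F$. Linear independence of $\{\tilde v_0,\dots,\tilde v_n\}$ in $\mathbb R^{n+1}$ forces affine independence of $v_0,\dots,v_n$ in $\mathbb R^n$, and regularity of $\conv(v_0,\dots,v_n)$ is then built in.

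The main obstacle I anticipate is the opening step of the computable part: converting the minimum-denominator definition of $d_F$ into an honest algorithm producing a witness $v_0$, which is what demands the Hermite normal form computation on the sublattice $L \subseteq \mathbb Z^{n+1}$. Everything downstream is either a direct invocation of Lemma \ref{lemma:ddd} or elementary lattice manipulation on $\mathbb Z^{n+1}$, the only subtle ingredient being the coprimality $\gcd(d_F,c_F) = 1$ from Lemma \ref{lemma:cod1} that lets me dial the last coordinate of every newly introduced basis vector down to exactly $c_F$.
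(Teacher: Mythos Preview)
Your proof is correct, and for the existential extension it takes a genuinely different route from the paper. The paper proceeds constructively throughout: after producing $v_0,\dots,v_e$ (via Lemma~\ref{lemma:ddd}, as you do), it searches an exhausting sequence of cubes centered at $v_0$ for a rational point $s$ with $\den(s)\le\max(1,d_F/2)$ making $\conv(v_0,\dots,v_e,s)$ regular, and then argues via Lemma~\ref{lemma:treno} that any such $s$ must satisfy $\den(s)=c_F$; in the codimension-${\neq}1$ case it simply searches for integer points. By contrast, you invoke the definition of $c_F$ to obtain a single extra vertex $v_{e+1}$ of denominator $c_F$ (after the Steinitz swap), extend $\{\tilde v_0,\dots,\tilde v_{e+1}\}$ to a basis of $\mathbb Z^{n+1}$, and then dial the last coordinate of each remaining basis vector down to $c_F$ by adding an integer combination of $\tilde v_0$ and $\tilde v_{e+1}$, using $\gcd(d_F,c_F)=1$. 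This is cleaner and avoids the case split, but it is non-constructive: you never compute $c_F$, and the definition of $c_F$ is an infimum over all of $\mathbb Q^n$. The paper's longer argument buys exactly that computability of $c_F$ (and of the full tuple $v_0,\dots,v_n$), which is what Corollary~\ref{corollary:affine-subspace} actually consumes. Since the theorem as stated only asserts computability of $v_0,\dots,v_e$, your argument suffices; just be aware that the paper is tacitly proving more than it states. Your Hermite-normal-form extraction of $d_F$ via the saturated sublattice $L=\mathbb Z^{n+1}\cap\mathrm{span}_{\mathbb R}(\tilde a_0,\dots,\tilde a_e)$ is a nice alternative to the paper's brute-force denominator search.
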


\begin{proof}
The problem whether
$F$ contains rational points of a 
prescribed  denominator is    decidable, 
and whenever a solution exists, 
such a point can be explicitly found---e.g.,  
{via integer linear programming,  \cite[\S 7]{hand}}. Thus
 we first check whether  
$F$ contains some {\it  integer} point.
  If such $x$ exists then  $d_F$=1.  
 Otherwise we proceed inductively to
  check if $F$ contains a point with  denominator 
 2, 3, $\dots$. Since
  $F$  is a rational subspace of $\mathbb R^n$,
  this process terminates,  yielding a point
  $v_0\in F$ with the smallest possible
  denominator. Thus  $d_F=\den(v_0)$.
Now Lemma \ref{lemma:ddd} yields  a regular
$e$-simplex $\conv(v_0,\dots,v_e)\subseteq F$ satisfying 
$\den(v_1)=\dots=\den(v_e)=\den(v_0)=d_F.$
The proof now proceeds arguing by cases:

\medskip
\noindent
{\it Case 1:} $F$ has  codimension 1,
i.e.,  $e=n-1$.

Let  $C_0$  be a closed   cube with rational vertices,   
 centered at $v_0$ and containing the simplex
$\conv(v_0,\dots,v_e)$.  Let 
  $C_0\subseteq C_1\subseteq  C_2\subseteq\dots$
  be a sequence
of  closed $n$-cubes with rational vertices, 
 centered at $v_0$, where $C_{t+1}$ is obtained
by doubling the sides of $C_t$.
For any   $t=0,1,\dots, $   we
check whether there exists a rational point $s\in C_t$
satisfying the conditions
\begin{enumerate}
\item[(*)]  $ \den(s) \leq \max(1, d_F/2)$, and
\item[(**)] 
 the set  $\conv(v_0,\dots,v_e, s)$ 
is a   regular  $n$-simplex.
\end{enumerate}
 Each cube $C_t$ contains only finitely many rational points $x$
satisfying $\den(x) \leq \max(1, d_F/2).$ 
 For any such point
 $x$, Lemma \ref{lemma:mink}
 yields a method to decide  whether
  $\conv(v_0,\dots,v_e, x)$  is a regular
   $n$-simplex: one checks that 
 the half-open $(n+1)$-dimensional
 parallelepiped
\begin{equation}
\label{equation:half}
\mathcal P(\widetilde{v}_0,\dots,\widetilde{v}_e,\tilde x)
=\{\mathbb R^n \ni x=\rho_0\widetilde{v}_0+\dots+ 
 \rho_e\widetilde{v}_e + \rho_{e+1}\widetilde{x}
 \mid  0\leq \rho_0,...,\rho_{e+1}<1\}
 \end{equation}
does not contain any  nonzero integer point.
 Lemma \ref{lemma:cod1}  in combination with
  \cite[Lemma 7]{cabmun-etds}   ensures the existence
  of a point  $s\in \mathbb R^n$
  with  $\den(s)=c_F \leq \max(1,d_F/2)$, together with 
  points $v^*_0,\dots,v^*_e\in F$, all with denominator $d_F$, such
  that  
$\conv(v^*_0,\dots,v^*_e,s)$ is a regular $n$-simplex. 
Similarly, since  $\conv(v_0,\dots,v_e)\subseteq F$
 is regular,  an application of 
  Lemma \ref{lemma:steinitz}  shows that 
 \begin{equation}
 \label{equation:faticoso}
\mbox{$\conv(v_0,\dots,v_e, s)$ is a regular $n$-simplex
with   $\den(s) \leq \max(1,d_F/2)$.}
\end{equation}
We have just shown that there is
$t=1,2,\dots,$
and a  rational point $s\in C_t$ satisfying conditions
(*) and (**) above. This result is now strengthened as follows:
\begin{equation}
\label{equation:claim}
\den(s)=c_F.
\end{equation}
If  $d_F=1$  then $c_F=1$ and by Condition (*)
we are done.  

\smallskip
So assume $d_F\geq 2.$

\smallskip
If $n=1$ then  $e=0$,  so $F=\{v_0\}$
for some  $v_0\in \mathbb Q\setminus \mathbb Z$  
$d_F=\den(v_0)\geq 2$.    
Whenever   $s\in \mathbb Q$ is such that  
$\conv(v_0,s)$ is a regular
1-simplex in $\mathbb R$ and $\den(s)\leq \max(1,\den(v_0)/2)=\den(v_0)/2$,
there is no $r$ with 
$\conv(v_0,r)$   regular and $\den(r)< \den(s)$.
As a matter of fact, say without loss of generality
$s>v_0.$  Repeated applications of 
Lemma \ref{lemma:mink}
 show: 
If $v_0<r<s$ then $\mathcal P(\tilde v_0, \tilde s)$ contains
the integer point  $\tilde r$, against the regularity of
$\conv(v_0,s)$. 
If $v_0<s<r$ then $\mathcal P(\tilde v_0, \tilde r)$ contains
the integer point  $\tilde s$, against the regularity of
$\conv(v_0,r)$.
If $v_0>r$ then    $\mathcal P(\tilde v_0, \tilde s)$ contains
the integer point  $\tilde v_0-\tilde r$, against the regularity of
$\conv(v_0,r)$.  
This settles the case  $n=1.$

\smallskip
If $n > 1 $ we  can write
$
n-1=e\geq 1\mbox{ and } \den(s)\leq d_F/2.
$
Let  
$
\mathcal D =\{u\in \mathbb Q^{n}\mid  \conv(v_0,\dots, v_e,  u)
\mbox{ is a regular $n$-simplex}\}.
$
Thus  $u\in \mathcal D$ iff  $\{\tilde v_0,\dots,\tilde v_e, \tilde u\}$
is a basis of $\mathbb Z^{n+1}$. By Condition (**) and
 Lemma \ref{lemma:treno},
$u\in \mathcal D$ iff
$\tilde u= \pm \tilde s + c,$ for some linear combination $c$ of
$\tilde v_0,\dots,\tilde v_e$ with integer coefficients. Thus in particular, if $u\in \mathcal D$
the $(n+1)$th coordinate  $\tilde u_{n+1}$ of $\tilde u$ satisfies
$$
1\leq  \tilde u_{n+1}=\den(u) = \pm \tilde s_{n+1}+ kd_F=\pm\den(s)+kd_F, \mbox{ for some integer $k$.}
$$
This is so because the
$(n+1)$th coordinates of $\tilde v_0,\dots,\tilde v_e$
are all equal to $d_F.$  Since   
$\den(s)\leq d_F/2$ then $\den(u)\geq \den(s)$ for all
$u\in\mathcal D.$  
Since,  by \eqref{equation:faticoso},
${s^*}\in \mathcal D$ then  $c_F\leq \den(s)\leq \den(s^*)=c_F$,
which settles \eqref{equation:claim}, and concludes the proof of Case 1.

\medskip
\noindent
{\it Case 2: } 
The codimension of  $F$ is different
from  1. 

Then  by Lemma \ref{lemma:cod1},  $c_F=1$.
Using Lemma \ref{lemma:ddd}  
 we compute 
a regular simplex $\conv(v_0,\dots,v_e)\subseteq F$ with
$\den(v_1)=\dots=\den(v_e)=\den(v_0)=d_F.$
In case $\dim(\conv(v_0,\dots,v_e))=n$
  we are done.
  In case $\dim(\conv(v_0,\dots,v_e))\not=n$,
   knowledge that  $c_F=1$   simplifies the search
(within the increasing sequence of cubes $C_t$) of
the desired integer points  $v_{e+1},\dots,v_n$ such that
$\conv(v_0,\dots,v_n)$ is regular.  Regularity amounts to
the unimodularity of  the integer matrix
whose rows are the vectors $v_0,\dots,v_n$---a decidable problem.
Since each $C_t$ contains only finitely many integer points, an
 exhaustive search
in each $n$-cube  $C_t$ centered at $v_0$ will provide 
the desired points $v_{e+1},\dots,v_n$.

By construction,  the map
$(a_0,\dots,a_e)\mapsto(v_0,\dots,v_n)$ is computable.  
\end{proof}

 \begin{corollary}
\label{corollary:affine-subspace}
For any rational affine space $F\subseteq \mathbb R^n$ the  
invariants
$\dim(F), \,\, d_F$ and $c_F$ 
in Theorem \ref{theorem:classification}  are  computable.
Thus it is  decidable whether the affine spans of two sets of    
 points  $a_0,\ldots,a_m\in  \mathbb Q^n$ and 
\,\,$b_0,\ldots,b_l\in  \mathbb Q^n$
have the same $\GLnZ$-orbit.
Moreover, there is a Turing machine  $\mathcal M$  which,
whenever two  $e$-dimensional 
 rational affine spaces  $F,F'\subseteq \mathbb R^n$ 
have the same $\GLnZ$-orbit ($e=0,\dots,n$),
computes two $(n+1)$-tuples
$V=(v_0,\dots,v_n)$ and  $V'=(v'_0,\dots,v'_n)$ 
 of rational points
 with $\den(v_0)=\dots=\den(v_e)=\den(v'_0)=\dots
=\den(v'_e)=d_F$ and
$\den(v_{e+1})=\dots=\den(v_n)=\den(v'_{e+1})=\dots
=\den(v'_n)=c_F$ such that
$\conv(v_0,\dots,v_n)$ and $\conv(v'_0,\dots,v'_n)$ are 
 regular $n$-simplexes and  the map $\gamma=\phi_{VV'}\in \GLnZ$
of Lemma \ref{lemma:affinemap}  sends  $F$ onto $F'$.
\end{corollary}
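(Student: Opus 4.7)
The plan is to derive the three assertions in series, chaining the preceding results. For the first, once a presentation $F=\aff(a_0,\dots,a_m)$ is given, $\dim(F)=e$ is read off as the rank of the family $\{a_i-a_0\}$; the invariants $d_F$ and $c_F$ are then delivered by Theorem \ref{theorem:dddccc}, whose construction computes $d_F$ as the smallest denominator found during the bounded search over $1,2,3,\dots$, and whose case analysis exhibits $c_F$ either as $\den(s)$ for the point $s$ located by the exhaustive search in the cubes $C_t$ of Case~1, or as $c_F=1$ in Case~2 where the codimension is not~$1$.

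For the second assertion I would simply compute both triplets $(\dim(F),d_F,c_F)$ and $(\dim(G),d_G,c_G)$ and compare them componentwise; Theorem \ref{theorem:classification} then decides whether the two affine spans share a $\GLnZ$-orbit.

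For the third assertion, I would run the procedure of Theorem \ref{theorem:dddccc} on $F$ to produce the full tuple $V=(v_0,\dots,v_n)$ satisfying conditions (i)--(iii) of that theorem, and repeat the procedure independently for $F'$ to obtain $V'=(v'_0,\dots,v'_n)$. Under the orbit-equality hypothesis the matching denominator conditions $\den(v_i)=\den(v'_i)$ hold by construction. Feeding $(V,V')$ to the Turing machine supplied by Lemma \ref{lemma:affinemap} is then legal, since $\conv(v_0,\dots,v_n)$ and $\conv(v'_0,\dots,v'_n)$ are both regular $n$-simplexes, and it returns the unique $\gamma=\phi_{VV'}\in\GLnZ$ with $\gamma(v_i)=v'_i$ for all $i$. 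Because $\conv(v_0,\dots,v_e)\subseteq F$ has dimension $e=\dim(F)$, we have $F=\aff(v_0,\dots,v_e)$ and likewise $F'=\aff(v'_0,\dots,v'_e)$; since affine maps preserve affine spans, $\gamma(F)=F'$.

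The only non-routine point is ensuring that the \emph{full} tuple $V$, rather than just its first $e+1$ entries, can be extracted algorithmically. The statement of Theorem \ref{theorem:dddccc} is explicit only about $v_0,\dots,v_e$, but its proof locates the remaining vertices---$v_{e+1}=s$ in Case~1, and the integer vertices $v_{e+1},\dots,v_n$ in Case~2---through the same finite search in the expanding cubes $C_t$; this would need to be packaged as an explicit subroutine of $\mathcal M$. Once that is made explicit, the rest of the corollary follows mechanically from the three ingredients already assembled.
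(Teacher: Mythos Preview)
Your proposal is correct and follows essentially the same route as the paper: invoke Theorem~\ref{theorem:dddccc} to produce the tuples $V,V'$ with the required denominators, compare the triplets via Theorem~\ref{theorem:classification}, and then apply Lemma~\ref{lemma:affinemap} to obtain $\gamma=\phi_{VV'}$. Your observation that the \emph{statement} of Theorem~\ref{theorem:dddccc} only asserts computability of $v_0,\dots,v_e$ is well taken; note, however, that the final line of its proof explicitly records that the map $(a_0,\dots,a_e)\mapsto(v_0,\dots,v_n)$ is computable, so the subroutine you need is already there.
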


\begin{proof}  From Theorem \ref{theorem:dddccc}  we obtain:
(i) rational points $v_0,\dots,v_e\in F$ such that
$\conv(v_0,\dots,v_e)$ is a regular $e$-simplex and 
$\den(v_0)=\dots=\den(v_e)=d_F$; 
(ii)    (if $e<n$) additional points  
$v_{e+1},\dots,v_n\in \mathbb R^n$
such that
$\conv(v_0,\dots,v_n)$ is a regular $n$-simplex 
with $\den(v_{e+1})=\dots=\den(v_n)=c_F$.
Similarly,  from 
 $(b_0,\ldots,b_k)$,  we get 
  a regular $n$-simplex
  $\conv(v'_0,\dots,v'_n)$ with
    $\conv(v'_0,\dots,v'_e)\subseteq F'$, and denominators as
    in (i)-(ii).  By 
   Theorem \ref{theorem:classification},   the identity 
    $(\dim(F),d_F,c_F)=(\dim(F'),d_{F'},c_{G'})$ can be effectively
    checked. If the identity holds, 
 Lemma  \ref{lemma:affinemap} and
 Theorem  \ref{theorem:dddccc}
yield  the  desired Turing machine $\mathcal M$.
 \end{proof}


\begin{remark} 
One might speculate that the  map 
$\gamma\in \GLnZ$ of  $F$ onto $F'$
in  Corollary \ref{corollary:affine-subspace}   is obtainable 
by solving 
a system of equations $p_1=0,\dots,p_k=0,$ where
each $p_i$ is a polynomial with integer coefficients
and the unknowns are integers representing  the terms 
the matrix $\gamma.$  As $n$ grows, so does
the degree of the system.  We  are then  
 faced with a  formidable subproblem of a 
  diophantine problem whose general 
  undecidability was proved by 
Matiyasevi\v{c} in his negative solution of Hilbert Tenth Problem,
  \cite{mat1}. 
Taking an alternative route,  the decidability of the
orbit problem for 
$F$ and $F'$ has been established 
 by constructing suitable  regular simplexes 
 in $F$ and $F'$,  using the classification
 Theorem \ref{theorem:classification}.
 In  the next sections, 
refinements of these  techniques
 will provide  computable complete
invariants for triangles and ellipses in $\GLnZ$-geometry. 
 \end{remark}

\section{Classification of 
angles in $\GLnZ$-geometry}
\label{section:angles}
As a special case of a rational affine space, a {\it rational line}
$L$ in $\mathbb R^n$ is a line containing at least two distinct rational
points.  Every rational point $v\in L$ determines
two  {\it rational half-lines}  in $L$ with a common 
origin $v$. 
A {\it rational oriented angle} is a pair $(H,K)$ of rational half-lines in 
$\mathbb R^n$ with  a common origin.
We will henceforth assume
that  ($n\geq 2$ and)  the affine spans of $H$ and $K$
in $\mathbb R^n$ 
are distinct  (for short, the angle $(H,K)$ is {\it nontrivial}).
Nontriviality is decidable by elementary linear algebra.
We  denote by $\widehat{HK}$ the convex portion
of the plane
$\aff(H\cup K)\subseteq \mathbb R^n$
 obtained by rotating $H$ to $K$
around $v$ in $\aff(H\cup K)$, with the
orientation from $H$ to $K$.  Given a rational oriented
angle $(H',K')$ in $\mathbb R^n$ we write
$
(H,K)\cong(H',K')
$
if there is $\gamma\in \GLnZ$ such that
$\gamma(H)=H'$ and $\gamma(K)=K'.$   
When this is the case we also write 
$\gamma\colon (H,K)\cong (H',K')$.
While  $\widehat{HK}=\widehat{KH}$, 
 Theorem \ref{theorem:angle} will show that
the condition $(H,K)\cong(K,H)$ generally fails.

\begin{lemma}
\label{lemma:mattutino}
For any rational half-line $H \subseteq \mathbb R^n$
 with  origin $v$, let
$H_{\rm reg}$ be the  set of rational points $h\in H$
such that the segment $\conv(v,h)$ is regular.  We then have: 

\medskip
(i) Any 
two distinct points  $h,k\in H_{\rm reg}$ have different denominators.

\medskip
(ii) $H_{\rm reg}$ contains a farthest point from $v$, denoted
$\mathsf{q}_H.$  This is also characterized as
 the point in $H_{\rm reg}$  with
the smallest possible denominator. 

\medskip
(iii) Let $(H,K)$ be a rational oriented angle in  $\mathbb R^n$ 
(with vertex $v$). 
 Let  $\mathcal L_{HK}$ be the set 
of rational points  $y\in \widehat{HK}$ such that 
$\conv(v,\mathsf{q}_H,y)$ is regular and $\den(y)$
is as small as possible. Then there exists a (necessarily unique)
point $\mathsf{p}_{HK}\in \mathcal L_{HK}$   
 nearest to $K$.
\end{lemma}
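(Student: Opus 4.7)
The plan is to pass to homogeneous correspondents throughout, turning ``denominator of $w$'' into ``last coordinate of $\tilde w$'' and ``regularity of a simplex'' into ``its vertex correspondents extend to a $\mathbb Z$-basis of $\mathbb Z^{n+1}$'', so that parts (i)--(iii) become statements about primitive lattice vectors in a $2$- or $3$-dimensional rational subspace spanned by the relevant $\tilde w$'s, handled primarily by Lemma \ref{lemma:treno}. For (i), pick distinct $h,k\in H_{\rm reg}$. All the correspondents of rational points of $H$ lie in the rank-$2$ plane $\Pi\subseteq\mathbb R^{n+1}$ spanned by $(v,1)$ and the direction vector $(u,0)$ of $H$; by regularity both $\{\tilde v,\tilde h\}$ and $\{\tilde v,\tilde k\}$ are $\mathbb Z$-bases of $\Lambda=\Pi\cap\mathbb Z^{n+1}$, so Lemma \ref{lemma:treno} yields $\tilde k=\pm\tilde h+c\tilde v$ with $c\in\mathbb Z$. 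Because $h$ and $k$ lie on the same half-line from $v$, the vectors $\tilde h,\tilde k$ fall on the same side of $\mathbb R\tilde v$ in $\Pi$, ruling out the minus sign; reading off the last coordinate then gives $\den(k)-\den(h)=c\,\den(v)$, so $\den(h)=\den(k)$ forces $c=0$ and $h=k$.

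For (ii), using the set-up of (i) pick $e_2\in\Lambda$ so that $\{\tilde v,e_2\}$ is a basis of $\Lambda$ and $e_2$ lies on the $(u,0)$-side of $\mathbb R\tilde v$. By Lemma \ref{lemma:treno} the primitive $p\in\Lambda$ on that side such that $\{\tilde v,p\}$ is a basis are exactly $p=x\tilde v+e_2$, $x\in\mathbb Z$, and $p$ lies in the cone $C=\pos[\tilde v,(u,0)]\subseteq\Pi$, which carries the homogeneous correspondents of the rational points of $H$, iff $x\ge x_0$ for an explicit integer $x_0$. As $x$ grows, the ray through $p$ tilts back toward $\mathbb R_{\ge 0}\tilde v$, so the affine point on $H$ moves toward $v$; consequently $x=x_0$ simultaneously yields the farthest rational point of $H_{\rm reg}$ from $v$ and the smallest value of its denominator $x\,\den(v)+(\text{last coord of }e_2)$. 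This point is $\mathsf q_H$.

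For (iii), repeat the construction one dimension higher. Let $\Psi\subseteq\mathbb R^{n+1}$ be the rank-$3$ subspace spanned by $(v,1),(u_H,0),(u_K,0)$ (three-dimensional by nontriviality of the angle), and set $\Lambda_\Psi=\Psi\cap\mathbb Z^{n+1}$. Since $\conv(v,\mathsf q_H)$ is regular, $\{\tilde v,\tilde{\mathsf q}_H\}$ extends to a basis $\{\tilde v,\tilde{\mathsf q}_H,f_3\}$ of $\Lambda_\Psi$; by Lemma \ref{lemma:treno}, $\conv(v,\mathsf q_H,y)$ is regular iff $\tilde y=\pm f_3+k_1\tilde v+k_2\tilde{\mathsf q}_H$ for some $k_1,k_2\in\mathbb Z$, with the sign fixed by $y\in\widehat{HK}$. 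The denominator becomes the affine-linear form $\den(y)=\epsilon_3+k_1\den(v)+k_2\den(\mathsf q_H)$, where $\epsilon_3$ is the last coordinate of $f_3$, and $\mathcal L_{HK}$ is its set of minimisers over the feasible integer cone, i.e., a coset of the rank-$1$ kernel sublattice generated by $(\den(\mathsf q_H)/g,-\den(v)/g)$, $g=\gcd(\den(v),\den(\mathsf q_H))$. Substituting $\tilde v=\den(v)(v,1)$ and $\tilde{\mathsf q}_H=\den(\mathsf q_H)(\mathsf q_H,1)$ shows that one primitive step along this kernel shifts $y$ in $\mathbb R^n$ by a nonzero positive rational multiple of $v-\mathsf q_H$, so $\mathcal L_{HK}$ lies on a line $\ell\subseteq\aff(H\cup K)$ parallel to $H$. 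Nontriviality of the angle forces $\ell$ to meet $\aff(K)$ at a single point, and since $\ell\ne H$, the intersection $\ell\cap\widehat{HK}$ is a ray starting at $\ell\cap K$ and extending to infinity in the direction of $H$; the distance from $K$ increases strictly along this ray, so $\mathcal L_{HK}$ has a unique point nearest $K$, which is $\mathsf p_{HK}$.

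The main obstacle is part (iii): after the lattice set-up one must identify the direction of the minimum-denominator coset as parallel to $H$ in $\mathbb R^n$ (the content of the substitution calculation with $\tilde v$ and $\tilde{\mathsf q}_H$), and then verify that $\ell\cap\widehat{HK}$ is a ray rather than a bi-infinite line, which is what forces uniqueness of the nearest point to $K$. Non-emptiness of $\mathcal L_{HK}$ itself is routine: subdividing the cone $\pos[\tilde v,(u_H,0),(u_K,0)]\subseteq\Psi$ along the ray through $\tilde{\mathsf q}_H$ and applying the desingularization procedure of the proof of Lemma \ref{lemma:ddd} yields a regular refinement containing a ray $r=f_3+k_1\tilde v+k_2\tilde{\mathsf q}_H$ of positive last coordinate, whose affine correspondent is a feasible witness.
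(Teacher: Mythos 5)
Your proof is correct and follows essentially the same approach as the paper: pass to homogeneous correspondents and reason about lattice bases in $\mathbb Z^{n+1}$, with the line/half-line in (iii) identified via Lemma \ref{lemma:treno}. You use Lemma \ref{lemma:treno} uniformly where the paper invokes Lemma \ref{lemma:mink} for (i) and is terser in (ii)--(iii), but the underlying argument is the same; your extra computational detail (the $x\,\tilde v + e_2$ parameterization in (ii), the kernel-step calculation in (iii)) fills in exactly what the paper leaves implicit.
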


\begin{proof} It is easy to see that
$H_{\rm reg}$ is an infinite set of rational points having $v$ as
an accumulation point.

(i) By way of contradiction,
assume  $h$ and $k$ are distinct points of $H_{\rm reg}$
with   $\den(h)=\den(k)$. Passing to
homogeneous correspondents in $\mathbb R^{n+1}$
and recalling \eqref{equation:half-par},
it follows   that  either  parallelogram   
$\mathcal P(\tilde v,\tilde h)$ or $\mathcal P(\tilde v, \tilde k)$,
say   
$\mathcal P(\tilde v,\tilde h)$,
contains a nonzero integer point.
By Lemma \ref{lemma:mink},  
$\{\tilde v,\tilde h\}$   cannot be extended to a basis
of  $\mathbb Z^{n+1}$, i.e., 
the segment $\conv(v,h)$   is not regular---a contradiction.

\smallskip
(ii)  immediately follows from  (i) and  Lemma \ref{lemma:mink}. 

\smallskip
(iii) Since any two points in $\mathcal L_{HK}$  have
equal denominators, the  (infinite) set  
$\mathcal L_{HK}$  has no accumulation
points. From the proof of  Lemma
\ref{lemma:treno} 
 it follows  that
$\mathcal L_{HK}$ is contained in 
a uniquely determined  
 rational half-line
 $M \subseteq \widehat{HK}$ parallel to $H$,
 whose origin lies in  ${K}$.  
This ensures the existence and uniqueness of the  
point   $\mathsf{p}_{HK}$ nearest to $K$.
\end{proof}

\medskip

The following theorem provides a computable complete
invariant for rational oriented angles   $\GLnZ$-geometry:
 \medskip 

\begin{theorem}
[Rational  oriented angles in $\GLnZ$-geometry]
\label{theorem:angle}
For any   rational oriented angle
$(H,K)$ in $\mathbb R^n$ with vertex $v$,
 let \,\,$\mathsf{angle} (H,K)$
be the following sextuple:

\begin{itemize}
\item[(i)] The triple of integers 
$(\den(v), \den( \mathsf{q}_H)$,  $\den(\mathsf p_{HK}))$.

\smallskip
\item[(ii)]  
The (first two)   barycentric coordinates of $\mathsf{q}_K$ 
with respect to  the oriented
 triangle $\conv(v,\mathsf{q}_H,\mathsf{p}_{HK})$.

\smallskip
\item[(iii)]  
The integer  $c_{\aff(\widehat{HK})},$
(which,  by Lemma \ref{lemma:cod1},
is dispensable when  $n\not=3$).
\end{itemize}
Then the map  \,\, $(H,K)\mapsto
\mathsf{angle} (H,K)$ \,\,  is   computable.
Given a
 rational oriented angle    $(H',K')$ in $\mathbb R^n$,
 we have $(H,K)\cong(H'K')$   
\,\, iff \,\, $\mathsf{angle} (H,K)=\mathsf{angle} (H',K')$. 
Thus  the orbit  problem
\eqref{equation:theproblem}
 for rational angles  
 in $\mathbb R^n$ is decidable. 
\end{theorem}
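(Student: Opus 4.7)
The plan is to establish both the computability of $\mathsf{angle}(H,K)$ and that it is a complete $\GLnZ$-invariant; decidability of problem \eqref{equation:theproblem} for rational oriented angles will then follow by computing and comparing sextuples. For computability, I would first compute $\mathsf{q}_H$ as the unique smallest-denominator element of $H_{\rm reg}$ (guaranteed by Lemma \ref{lemma:mattutino}(i)--(ii)) through a systematic enumeration of rational points on $H$ in order of denominator, using Lemma \ref{lemma:mink} as the regularity test for $\conv(v,\cdot)$; $\mathsf{q}_K$ is obtained identically. For $\mathsf{p}_{HK}$ I would exploit Lemma \ref{lemma:mattutino}(iii): the set $\mathcal{L}_{HK}$ lies on an effectively computable rational half-line $M$ parallel to $H$ with origin on $K$, so $\mathsf{p}_{HK}$ is located by a one-dimensional search along $M$. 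The barycentric coordinates in (ii) reduce to a $2\times 2$ rational linear solve, and $c_{\aff(\widehat{HK})}$ is computed via Corollary \ref{corollary:affine-subspace} applied to the rational 2-plane $\aff(H \cup K)$. Invariance of the sextuple under $\GLnZ$ is immediate: any $\gamma \in \GLnZ$ acts on homogeneous correspondents as a unimodular $(n+1)\times(n+1)$ matrix, preserving denominators and simplex regularity, so it carries $H_{\rm reg}, K_{\rm reg}, \mathcal{L}_{HK}$ onto their primed counterparts and, by the uniqueness assertions in Lemma \ref{lemma:mattutino}, sends $v, \mathsf{q}_H, \mathsf{q}_K, \mathsf{p}_{HK}$ to $v', \mathsf{q}_{H'}, \mathsf{q}_{K'}, \mathsf{p}_{H'K'}$. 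This delivers (i) and (ii); (iii) matches by Theorem \ref{theorem:classification}.

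For sufficiency, the plan is to manufacture, out of matching sextuples, regular $n$-simplexes $S,S'$ containing the regular $2$-simplexes $T=\conv(v,\mathsf{q}_H,\mathsf{p}_{HK})$ and $T'=\conv(v',\mathsf{q}_{H'},\mathsf{p}_{H'K'})$ respectively, with vertex-denominators agreeing position-by-position, and then to invoke Lemma \ref{lemma:affinemap}. Concretely, I would apply Theorem \ref{theorem:dddccc} to the rational plane $P=\aff(\widehat{HK})$ to obtain a regular $n$-simplex $\conv(u_0,u_1,u_2,u_3,\dots,u_n)$ whose first three vertices lie in $P$ and whose denominator pattern is $(d_P,d_P,d_P,c_P,\dots,c_P)$. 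Since $\aff(u_0,u_1,u_2)=P=\aff(T)$, Lemma \ref{lemma:steinitz} certifies that $S=\conv(v,\mathsf{q}_H,\mathsf{p}_{HK},u_3,\dots,u_n)$ is again a regular $n$-simplex; a parallel construction on the primed side yields $S'$. Lemma \ref{lemma:affinemap} then supplies a unique $\gamma\in\GLnZ$ sending $S$ to $S'$ vertex-by-vertex. By construction $\gamma(v)=v'$ and $\gamma(\mathsf{q}_H)=\mathsf{q}_{H'}$, so $\gamma(H)=H'$; by (ii), the image $\gamma(\mathsf{q}_K)$---being the point of $\aff(T')$ with the prescribed barycentric coordinates relative to the oriented triangle $T'$---coincides with $\mathsf{q}_{K'}$, and therefore $\gamma(K)=K'$.

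The hard part will be verifying that the vertex-denominator sequences of $S$ and $S'$ match position-by-position, since Lemma \ref{lemma:affinemap} demands this as a hypothesis. The first three positions agree by (i). The subtle point is that the other invariant of $P$ as a rational affine space, namely $d_P$, is not stored in the sextuple, so one cannot directly invoke Corollary \ref{corollary:affine-subspace} on $P$ versus $P'$; what rescues the argument is that $d_P$ controls only the first three vertices of the Theorem \ref{theorem:dddccc} extension---the very positions that are overwritten by $T,T'$ through the Steinitz exchange---while the added-vertex denominators all equal $c_P=c_{P'}$ by (iii), an integer which, by Lemma \ref{lemma:cod1}, is nontrivial precisely in the ambient dimension $n=3$ where it is actually needed, and equal to $1$ otherwise. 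This interplay between (i), (iii) and the Steinitz exchange is the technical heart of the argument.
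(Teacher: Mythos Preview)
Your proposal is correct and follows essentially the same route as the paper: computability via Lemma~\ref{lemma:mattutino} and Corollary~\ref{corollary:affine-subspace}, invariance via the denominator- and regularity-preserving action of $\GLnZ$, and sufficiency by extending the regular $2$-simplexes $\conv(v,\mathsf{q}_H,\mathsf{p}_{HK})$ and $\conv(v',\mathsf{q}_{H'},\mathsf{p}_{H'K'})$ to regular $n$-simplexes via Theorem~\ref{theorem:dddccc} plus Lemma~\ref{lemma:steinitz}, then invoking Lemma~\ref{lemma:affinemap}. Your final paragraph usefully makes explicit a point the paper leaves tacit, namely that although $d_P$ is absent from the sextuple, the Steinitz exchange overwrites precisely the three vertex positions governed by $d_P$, so only the equality $c_P=c_{P'}$ from item~(iii) is needed to match the remaining vertex denominators.
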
 

\begin{proof}
The definition of the rational points
 $\mathsf{q}_H,\mathsf{p}_{HK}, \mathsf{q}_K$
 in Lemma \ref{lemma:mattutino}  ensures their 
 computability.  The
  barycentric coordinates of $\mathsf{q}_K$ 
with respect to 
$\conv(v,\mathsf{q}_H,\mathsf{p}_{HK})$
are rational and computable.
 The  computability of the sextuple  $\mathsf{angle} (H,K)$
now  follows by  
 Corollary \ref{corollary:affine-subspace}.

In order to prove completeness of the invariant, 
let us suppose   $\eta\in \GLnZ$ maps  
 $H$ onto $H'$ and  $K$ onto $K'$, 
 $\eta\colon (H,K)\cong (H',K').$ 
 It follows that $\eta$ maps $\widehat{HK}$
 onto $\widehat{H'K'}.$ 
Since
  $\eta$ preserves
  regular simplexes and 
denominators of rational points, 
 Lemma \ref{lemma:mattutino} yields  the identities 
$\eta(\mathsf{q}_H)=\mathsf{q}_{H'}$,
$\eta(\mathsf{q}_K)=\mathsf{q}_{K'}$ and
  $\eta(\mathsf{p}_{HK})=\mathsf{p}_{H'K'}$.
The denominators of $v, \,\,\mathsf{q}_H,\,\, 
\mathsf{p}_{HK}, \,\,\mathsf{q}_K$ 
respectively coincide with
the denominators of $v',\,\, \mathsf{q}_{H'},\,\, 
\mathsf{p}_{H'K'}, \,\,\mathsf{q}_{K'}$.
Since   
$\eta$ preserves
affine combinations,  the  points
$\mathsf{q}_{K}$ and $\mathsf{q}_{K'}$
have the same  barycentric coordinates
with respect to the  triangles\,\, $\conv(v,\mathsf{q}_{H},
\mathsf{p}_{HK})$ \,\,and\,\,
$\conv(v',\mathsf{q}_{H'},\mathsf{p}_{H'K'})$.
Since  $\eta$ maps  $\aff(\widehat{HK})$ 
onto $\aff(\widehat{H'K'})$, 
Theorem \ref{theorem:classification}  yields
 $c_{\aff(\widehat{HK})}=c_{\aff(\widehat{H'K'})}$. 
Thus
$\mathsf{angle} (H,K)$\,\,
=\,\,$\mathsf{angle} (H', K')$.

Conversely, assume 
$\mathsf{angle} (H,K)=\mathsf{angle} (H',K')$, 
with the intent of proving  $(H,K)\cong (H',K').$
From the pair $(H,K)$ we compute
the subspace $F=\aff(\widehat{HK})$. 
 Since  $\conv(v,\mathsf{q}_H,\mathsf{p}_{HK})$ is a regular
 2-simplex, 
combining 
 Lemma  \ref{lemma:steinitz} and
 Theorem   \ref{theorem:dddccc}, 
 we obtain an $(n-2)$-tuple ${\bf a}=(a_1,\dots,a_{n-2})$ 
 of  rational points in $\mathbb R^n$, all 
with 
denominator $c_F$,  such that
$\conv(v,\mathsf{q}_H,\mathsf{p}_{HK},{\bf a})$
 is a regular $n$-simplex
in $\mathbb R^n$. Letting   $F'=\aff(\widehat{H'K'})$,
we  similarly compute
an $(n-2)$-tuple ${\bf a}'=(a'_1,\dots,a'_{n-2})$ 
of  rational points in $\mathbb R^n$, all with
denominator $c_{F'}$, in such a way that 
$\conv(v,\mathsf{q}_{H'},\mathsf{p}_{H'K'},{\bf a}')$
 is a regular $n$-simplex in $\mathbb R^n$.
Since $\mathsf{angle} (H,K)=\mathsf{angle} (H',K')$,
 the denominators of the vertices of 
 $\conv(v,\mathsf{q}_H,\mathsf{p}_{HK},{\bf a})$ 
are pairwise equal to the  denominators of the vertices of
$\conv(v,\mathsf{q}_{H'},\mathsf{p}_{H'K'},{\bf a}')$. 
Lemma \ref{lemma:affinemap} now  yields
a uniquely determined  map   $\theta\in \GLnZ$ 
such that
$\theta(\conv(v,\mathsf{q}_H,\mathsf{p}_{HK},{\bf a}))=
 \conv(v,\mathsf{q}_{H'},\mathsf{p}_{H'K'},{\bf a}').$
In more detail,   $\theta(\mathsf{q}_H)=\mathsf{q}_{H'}$
 and $\theta(v)=v'$, and hence
 $\theta(H)=H'.$  
 (Should we choose others   $(n-2)$-tuples
 ${\bf b},{\bf b}'$    
of points  with  denominator $c_{\aff(\widehat{HK})}
=c_{\aff(\widehat{H'K'})}$, such that
$\conv(v,\mathsf{q}_H,\mathsf{p}_{HK},{\bf b})$
and
$\conv(v',\mathsf{q}_{H'},\mathsf{p}_{H'K'},{\bf b}')$
are  regular $n$-simplexes, 
  the  resulting  map
 $\theta'\in \GLnZ$ given by
Lemma \ref{lemma:affinemap}
  would  still  
agree  with $\theta$ on 
 $\aff(\widehat{HK})$.)
 Since  $\theta$ preserves affine combinations
 and $\theta(\mathsf{p}_{HK})=\mathsf{p}_{H'K'}$, 
 from
 $\mathsf{angle} (H,K)=\mathsf{angle} (H',K')$
it follows that 
$\theta(\mathsf{q}_K)=\mathsf{q}_{K'}$. 
We have just proved $\theta\colon 
  (H,K) \cong  (H',K')$, as desired to complete
   the proof of the theorem. 
 \end{proof}

  \begin{remark}
 \label{remark:vertical}
{\it In  $\GLnZ$-geometry
vertical angles  need not have the same $\GLnZ$-orbit.} 
For instance, let $L$ be the  $x$-axis in $\mathbb R^2$,
and $M\subseteq \mathbb R^2$ be the   line passing through 
the points $v=(3/5,0)$ and $w=(1,1)$.
Let $L'\subseteq L$ be the half-line originating
at $v$ along the positive
direction of $L$. Let the half-line $L''$ be defined
by  $L''=\cl(L\setminus L')$, where ``cl'' denotes closure.
Let $M'\subseteq M$ be the half-line
originating at $v$ and lying in the first quadrant.    
Let $M''=\cl(M\setminus M').$  A straightforward
computation shows that 
$\mathsf{angle} (L'',M'')\not=\mathsf{angle} (L',M')
\not=\mathsf{angle} (M'',L'')$.
Therefore,  the two vertical angles
$\widehat{L'M'}$ and $\widehat{L''M''}$ 
do not have the same $\GLtwoZ$-orbit.
 \end{remark}

 While prima facie our computable complete invariant 
$\mathsf{angle} $  in 
Theorem \ref{theorem:angle}
 may look less elementary
 than its  euclidean
counterpart,\footnote{if indeed arc length or the circular
functions  are  more elementary than
our orbit-invariant $\mathsf{angle}$  in
 $\GLnZ$-geometry.}
the following proposition  shows that  any other
 computable complete $\GLnZ$-orbit  invariant
  of rational oriented angles in $\mathbb R^n$
  is Turing-equivalent to our invariant $\mathsf{angle}$. 
 

 \begin{proposition}
 [Universal property of  the invariant $\mathsf{angle} $]
 \label{proposition:equivalence-bis}
 Suppose  $\mathsf{newangle}$ is a
 computable complete  invariant
  of rational oriented angles in $\mathbb R^n$.
Then there is a Turing machine $\mathcal R$ which, over any 
 input string $\alpha=\mathsf{newangle}(H,K)$  outputs the string
  $\mathcal R(\alpha)=\mathsf{angle} (H,K)$. 
Conversely there is a Turing machine $\mathcal S$ which, over any 
 input string   $\beta=\mathsf{angle} (M,N)$,  outputs the string
 $\mathcal S(\beta)=\mathsf{newangle}(M,N)$.
 Further,
 \begin{equation}
 \label{equation:sera}
 \mbox{the two maps  $\alpha\mapsto \mathcal R(\alpha)$ 
 and $\beta \mapsto \mathcal S(\beta)$ are inverses of each other.}
 \end{equation}
 \end{proposition}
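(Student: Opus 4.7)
The plan is to exploit the fact that, because both $\mathsf{angle}$ and $\mathsf{newangle}$ are \emph{complete} $\GLnZ$-orbit invariants, the value of either one on $(H,K)$ determines the $\GLnZ$-orbit of $(H,K)$, and hence determines the value of the other. The only remaining issue is to convert this logical determination into an algorithm, and for that I would use a standard search/enumeration argument driven by the computability of both invariants.

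First I would observe that the set of rational oriented angles in $\mathbb R^n$ can be effectively enumerated: an angle is coded by its rational vertex $v$ together with a pair of primitive integer direction vectors for $H$ and $K$, and nontriviality is decidable by elementary linear algebra (as noted before Lemma \ref{lemma:mattutino}). Fix such an enumeration $(H_1,K_1),(H_2,K_2),\ldots$. Then $\mathcal R$ is defined as follows: on input $\alpha$, successively compute $\mathsf{newangle}(H_i,K_i)$ for $i=1,2,\dots$ (using the Turing machine provided by the assumed computability of $\mathsf{newangle}$), compare with $\alpha$ (a decidable equality of finite strings), and on the first index $i$ for which equality holds, halt and output $\mathsf{angle}(H_i,K_i)$, which is computable by Theorem \ref{theorem:angle}. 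Since the input is by hypothesis of the form $\alpha=\mathsf{newangle}(H,K)$ for an actual angle $(H,K)$, the enumeration eventually reaches an angle equivalent to $(H,K)$, so the search terminates. The machine $\mathcal S$ is defined symmetrically: enumerate the $(H_i,K_i)$'s, compute $\mathsf{angle}(H_i,K_i)$, compare with $\beta$, and upon a match output $\mathsf{newangle}(H_i,K_i)$.

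Correctness is then a one-line consequence of completeness. If $i$ is the first index with $\mathsf{newangle}(H_i,K_i)=\alpha=\mathsf{newangle}(H,K)$, completeness of $\mathsf{newangle}$ gives $(H_i,K_i)\cong(H,K)$, and completeness of $\mathsf{angle}$ (Theorem \ref{theorem:angle}) then gives $\mathsf{angle}(H_i,K_i)=\mathsf{angle}(H,K)$, as required. For \eqref{equation:sera}, starting with $\alpha=\mathsf{newangle}(H,K)$, the composition $\mathcal S\circ \mathcal R$ produces $\mathsf{newangle}(H'',K'')$ for some $(H'',K'')$ in the $\GLnZ$-orbit of $(H,K)$, and hence $\mathsf{newangle}(H'',K'')=\alpha$ by completeness of $\mathsf{newangle}$. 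The argument for $\mathcal R\circ \mathcal S$ is the mirror image, using completeness of $\mathsf{angle}$.

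There is no real obstacle here beyond bookkeeping: the only mildly delicate point is ensuring that the enumeration is genuinely effective and that the two completeness hypotheses are invoked in the correct direction. In particular, one must resist the temptation to try to invert $\mathsf{newangle}$ directly — which need not be possible in any algebraic sense — and instead use the unbounded search above, whose termination is guaranteed precisely because the input $\alpha$ is promised to lie in the range of $\mathsf{newangle}$.
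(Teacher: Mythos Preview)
Your proposal is correct and essentially identical to the paper's own proof: both enumerate rational oriented angles, perform an unbounded search comparing $\mathsf{newangle}$-values (resp.\ $\mathsf{angle}$-values) against the input, output the other invariant on the first match, and then deduce \eqref{equation:sera} from completeness. The paper even adds the same caveat you make at the end, noting that the range of $\mathsf{newangle}$ need not be decidable, so the machine may loop on inputs outside that range.
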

 
 \begin{proof} Suppose $\alpha =\mathsf{newangle}(H,K)$
 for some  rational angle  $(H,K)$. Equipping
 with some  lexicographic order  the set of
  all strings denoting rational angles in $\mathbb R^n$ 
  and letting
\begin{equation}
\label{equation:list-bis}
(I,J)^0,\,\, (I,J)^1,\dots
\end{equation}
  be their enumeration in this order,
  after a finite number of steps the first oriented
  rational angle  $(I,J)^t$ satisfying
  $\alpha=\mathsf{newangle}((I,J)^t)$
  will be detected.
  This follows from 
  our assumption about $\alpha$ and the
  computability
  of $\mathsf{newangle}.$
  The computability and completeness of  both  
  $\mathsf{angle} $ and $\mathsf{newangle}$  
  now yield  a Turing machine
  $\mathcal R$  
  computing, over input $\alpha$, the transformation  
  $\alpha \mapsto (I,J)^t \mapsto
   \mathsf{angle} ((I,J)^t)=
  \mathsf{angle} (H,K).$
(In case  $\alpha\not=\mathsf{newangle}((I,J)^k)$  
for all rational angles $(I,J)^k,$ \,\,\,
$\mathcal R$ will enter an infinite loop.
 {\it We are not
   assuming that the range of the invariant 
   $\mathsf{newangle}$ is decidable.}
  So in general  we cannot upgrade    $\mathcal R$  to a machine
      $\mathcal R^+$ that terminates after a finite number
      of steps  over any possible 
      input.)

  Conversely, suppose $\beta=\mathsf{angle} (M,N)$.
   The computability of
  $\mathsf{angle} $ similarly yields an effective procedure to
  detect
the first  oriented
  rational angle  $(I,J)^r$ in the list 
  \eqref{equation:list-bis}   such that
  $\beta=\mathsf{angle} ((I,J)^r)$.
  Again, the  computability and completeness  of  
  both invariants
  $\mathsf{newangle}$ and   $\mathsf{angle} $ yield  a Turing machine
  $\mathcal S$  
  computing the transformation
    $\beta \mapsto 
(I,J)^r \mapsto \mathsf{newangle}((I,J)^r)=
  \mathsf{newangle}(M,N).$   
Finally, \eqref{equation:sera} follows
from   the  completeness of  the invariants 
$\mathsf{newangle}$ and   $\mathsf{angle}.$  
 \end{proof}

 With the same proof,  
the computable complete
  invariant for
  segments,  triangles and ellipses constructed in the 
  next two sections 
  have the same
universal property.

\begin{figure} 
    \begin{center}                                     
    \includegraphics[height=8.9cm]{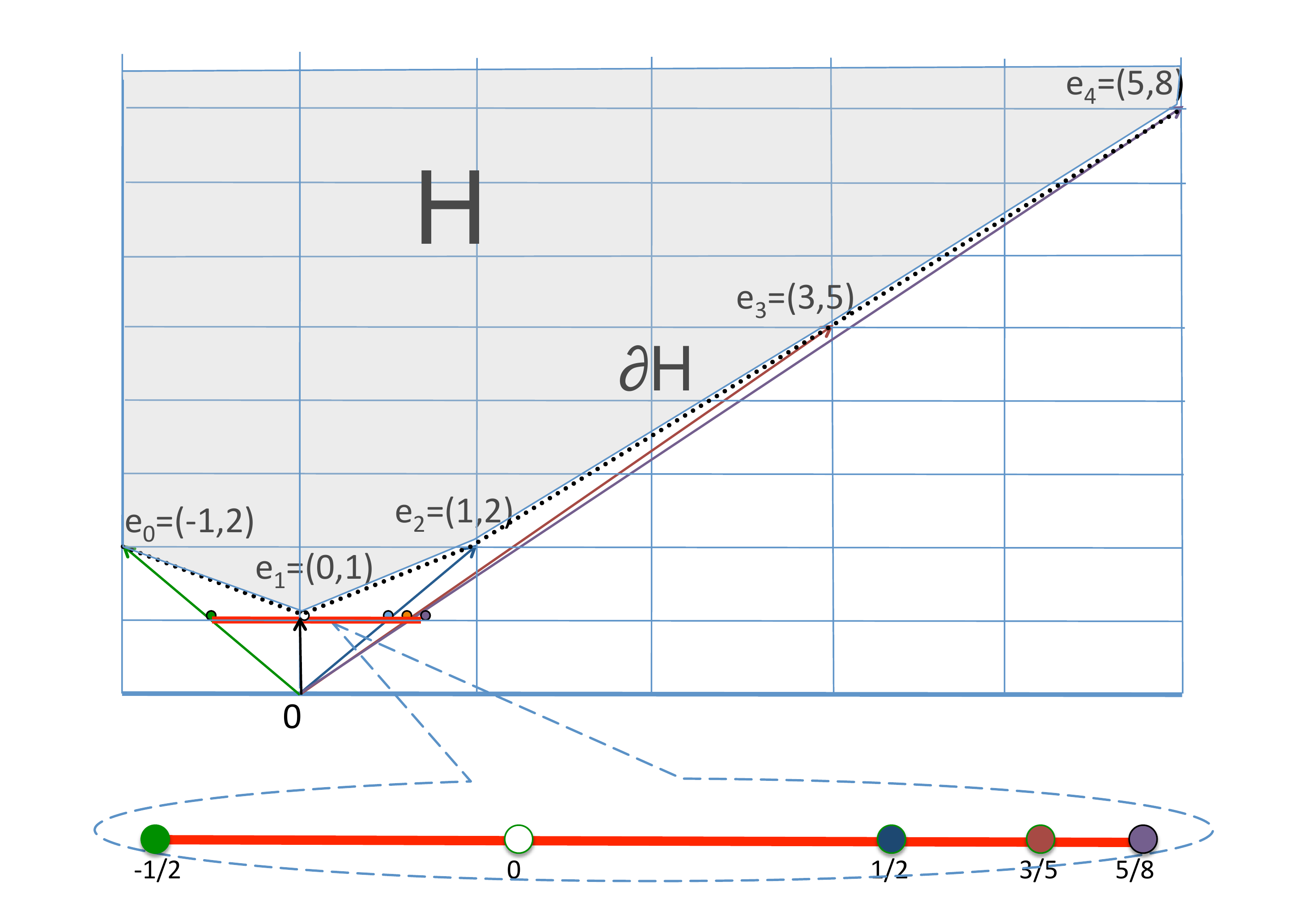}    
    \end{center}                                       
 \caption{\small  The Hirzebruch-Jung desingularization  $\Phi$ of the
 cone $C={\rm pos}[(-1,2),(5,8)]\subseteq \mathbb R^2$,
  and its corresponding regular triangulation
 $\mathsf{HJ}(A)$ of the rational segment 
 $A=\conv(-1/2,5/8)\subseteq \mathbb R$. 
 $\Phi$ is the regular fan whose primitive generating vectors
 are $e_0,\dots,e_4$.   $\mathsf H$ is the convex
 hull of the set of nonzero integer points of $C$. 
 $\partial \mathsf H$ is the relative  boundary   of $\mathsf H$.  
}  
    \label{figure:hjtris}                                                 
   \end{figure}

\section{Classification of  triangles in $\GLnZ$-geometry}
 \label{section:triangles}
\subsection*{The Hirzebruch-Jung  algorithm:
notation and terminology}
For any pair  $(a,b)$ of distinct points in $\mathbb Q^n$ let
us equip the  rational
segment  $A=\conv(a,b)\subseteq \mathbb R^n$ 
 with the orientation from $a$ to $b$.   $A$ is said to be
 an {\it oriented rational segment.}  With 
 the notation of \eqref{equation:cone}, let
  $C=\pos[\tilde a,\tilde b]\subseteq \mathbb R^{n+1}$
  be the positive hull of the homogeneous correspondents
  of $a$ and $b$.
  Let further 
\,$N$ be the set of nonzero integer points in 
$C$, and $\mathsf H = \conv(N)$, with its relative
boundary $\partial \mathsf H$.
Following
\cite[p.24-25]{oda} or  \cite[2.6]{ful}
we write  
$$\{e_0=\tilde a, e_1,   e_2,\dots,
 e_u, e_{u+1}=\tilde b \}\subseteq \mathbb Z^{n+1}
 \subseteq \mathbb R^{n+1}$$  
 for  the set of integer points  lying
on the compact edges of    $\partial \mathsf H$,
listed in the order $\sqsubseteq$  from 
$\tilde a$ to $\tilde b$.  Thus
$e_i\sqsubseteq e_j$ iff the angle  $\widehat{e_00e_i}$ 
 is contained in the
angle $\widehat{e_00e_{j}}$.
 The complex of cones  (i.e., the fan) in 
 $\mathbb R^{n+1}$  whose primitive 
 generating vectors are the  
integer vectors ${e}_i$ is said to be obtained via
 the {\it Hirzebruch-Jung (continued fraction, 
 desingularization) algorithm} on the cone $C$, \cite[p.46]{ful}.
 Let the points $x_i\in \mathbb Q^n$ be defined by 
 $$\tilde x_i=e_i,\,\,\,
(i=1,\dots,u), \,\,\,x_0=a, \,\,\,x_{u+1}=b.$$ 
 We will use the notation
 \begin{equation}
 \label{equation:hj}
  \mathsf{HJ}(A) = \mbox{ the triangulation of 
$\conv(a,b)$  with  vertices  
$x_0,x_1,\dots,x_u, x_{u+1}$}
\end{equation}
  listed in the order inherited from
$\sqsubseteq$.

\begin{proposition} 
\label{proposition:hj}
The Hirzebruch-Jung   algorithm of 
the oriented rational segment 
$A=\conv(a,b)\subseteq \mathbb R^n$
outputs a  list 
of  rational points $a=x_0,x_1,\dots,x_u, x_{u+1}=b$  
 having the following properties, for each $i=0,\dots, u$:
\begin{itemize}
\item[(i)]  The segment $\conv(x_i,x_{i+1})$  is regular.

\item[(ii)]
 $x_{i+1}$ is the 
(necessarily unique)   rational
 point  $z\in A$ with the smallest possible denominator 
 such that the segment   
$\conv(x_i,z)$ is regular. 
Equivalently, 
$x_{i+1}$ is  the
farthest point $z$ from $x_i$ in $A$ such that
$\conv(x_i,z)$ is regular. 

\item[(iii)]
 The vertices of $\mathsf{HJ}(A)$ are a subset of the vertices
 of every regular triangulation
of $A$. Thus the list
$x_0,x_1,\dots,x_u, x_{u+1}$ is uniquely determined
by $(a,b).$ 

\item[(iv)]  The map
$A\mapsto \mathsf{HJ}(A)$ is computable. 
\end{itemize}
\end{proposition}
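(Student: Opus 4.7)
Work inside the 2-dimensional plane $\Pi = \mathrm{span}(\tilde a, \tilde b) \subseteq \mathbb R^{n+1}$, with its rank-2 sublattice $\Lambda = \Pi \cap \mathbb Z^{n+1}$. Since $\mathcal P(u, v) \subseteq \Pi$ for $u, v \in \Lambda$, Lemma \ref{lemma:mink} gives the translation: $\{u, v\} \subseteq \Lambda$ is part of a $\mathbb Z^{n+1}$-basis iff $\{u, v\}$ is a $\Lambda$-basis. In $\Pi$ the cone $C$ is 2-dimensional with apex $0$, and the compact lower boundary $\partial \mathsf H$ faces the origin; the list $e_0, \ldots, e_{u+1}$ enumerates the $\Lambda$-points on these compact edges in $\sqsubseteq$-order from $\tilde a$ to $\tilde b$.

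For (i) and (iv): apply Pick's theorem in $\Pi$ to the triangle $T_i = \conv(0, e_i, e_{i+1})$. Its three edges carry no further $\Lambda$-points --- on $\conv(0, e_j)$ because $e_j$ is primitive, on $\conv(e_i, e_{i+1})$ because $e_i, e_{i+1}$ are $\sqsubseteq$-consecutive --- and $T_i$ has no interior $\Lambda$-points, since any such $r \in N$ would lie strictly on the origin-side of $\partial \mathsf H$, contradicting $N \subseteq \mathsf H$. Pick's theorem then yields $|\det_\Lambda(e_i, e_{i+1})| = 1$, so $\{e_i, e_{i+1}\}$ is a $\Lambda$-basis and $\conv(x_i, x_{i+1})$ is regular, proving (i). Computability in (iv) is immediate: the Hirzebruch-Jung continued-fraction algorithm (\cite[\S 2.6]{ful}, \cite[p.~24--25]{oda}) produces the $e_i$'s from $\tilde a, \tilde b$ via the recurrence $e_{i+1} = a_i e_i - e_{i-1}$ with integer $a_i \geq 2$, and inverting the homogeneous correspondence recovers the $x_i$'s.

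For (ii), take any rational $z \in A$ strictly between $x_i$ and $b$ with $\conv(x_i, z)$ regular. By (i) and Lemma \ref{lemma:treno}, $\tilde z = r e_i + s e_{i+1}$ with $|s| = 1$, and the orientation ($z$ past $x_i$ toward $b$) forces $s = +1$. The crux, which I expect to be the main obstacle, is to show $r \geq 0$. Let $\ell \colon \Pi \to \mathbb R$ be the linear functional with $\ell(e_i) = \ell(e_{i+1}) = 1$; the line $\{\ell = 1\}$ supports $\mathsf H$ along the edge containing $\conv(e_i, e_{i+1})$, so $\mathsf H \subseteq \{\ell \geq 1\}$, and in particular $\ell(\tilde a), \ell(\tilde b) \geq 1$; this forces $\ell \geq 0$ on $C$ with $\ell^{-1}(0) \cap C = \{0\}$. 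Then $\ell(\tilde z) = r + 1 \geq 0$ yields $r \geq -1$, and $r = -1$ is excluded because $\tilde z = e_{i+1} - e_i \neq 0$ cannot lie in $\ell^{-1}(0) \cap C$. Hence $r \geq 0$, $(\tilde z)_{n+1} = r d_i + d_{i+1} \geq d_{i+1}$, with equality uniquely at $r = 0$, i.e.\ $z = x_{i+1}$; the equivalent characterization as the farthest such point from $x_i$ then follows from the identity $|z - x_i| = d_{i+1}|x_{i+1} - x_i|/(r d_i + d_{i+1})$, monotone in $r$.

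For (iii), let $\Delta$ be any regular triangulation of $A$ and suppose for contradiction that some $x_i$ lies in the strict interior of a 1-simplex $\conv(y_j, y_{j+1}) \in \Delta$. Regularity of that simplex forces $\{\tilde y_j, \tilde y_{j+1}\}$ to be a $\Lambda$-basis, so $e_i = m \tilde y_j + n \tilde y_{j+1}$ for unique integers $m, n \geq 1$; hence $e_i/(m+n)$ lies on the chord $\conv(\tilde y_j, \tilde y_{j+1}) \subseteq \mathsf H$. But taking a supporting-line functional $\ell$ of $\mathsf H$ at $e_i$ (one can be chosen avoiding the origin) and normalizing so $\ell(e_i) = 1$, one has $\mathsf H \subseteq \{\ell \geq 1\}$; then $e_i/(m+n) = t e_i$ with $t = 1/(m+n) < 1$ gives $\ell(t e_i) = t < 1$, contradicting $t e_i \in \mathsf H$. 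Therefore each $x_i$ is a vertex of $\Delta$, establishing (iii); uniqueness of the list $x_0, \ldots, x_{u+1}$ follows by applying (iii) to $\Delta = \mathsf{HJ}(A)$.
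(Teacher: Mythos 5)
The paper does not argue this proposition directly: it simply cites \cite[\S 2.6]{ful} and \cite[Proposition 1.19]{oda} for (i)--(iii) and declares (iv) evident from the definition of the algorithm. Your write-up is therefore a genuinely different route --- a self-contained proof --- and it is correct. Reducing everything to the saturated rank-2 sublattice $\Lambda=\Pi\cap\mathbb Z^{n+1}$, and justifying via Lemma~\ref{lemma:mink} that ``$\Lambda$-basis'' coincides with ``extendable to a $\mathbb Z^{n+1}$-basis,'' is exactly the right normalization. Pick's theorem in $\Lambda$ handles (i) cleanly; Lemma~\ref{lemma:treno} together with the supporting functional $\ell$ (normalized to $\ell(e_i)=\ell(e_{i+1})=1$, so that $\ell\geq 0$ on $C$ with equality only at $0$) handles (ii); and a second supporting-line argument at $e_i$ handles (iii). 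Two small points are worth recording. First, in (ii) you silently restrict $z$ to the $b$-side of $x_i$; this is the correct reading (the segment is oriented), and in fact it is \emph{forced} if the uniqueness clause is to survive: for $A=\conv(-1/2,\,5/8)$ one has $x_2=1/2$ and $x_3=3/5$ with $\den(x_3)=5$, yet $z=0=x_1$ on the $a$-side makes $\conv(x_2,z)$ regular with $\den(z)=1$. Second, the assertion that orientation forces $s=+1$ deserves one more line: write $\tilde b=\alpha e_i+\beta e_{i+1}$ in the basis $\{e_i,e_{i+1}\}$; since $e_{i+1}$ lies (weakly) between $e_i$ and $\tilde b$ in the $\sqsubseteq$-order one has $\beta>0$, and since $\tilde z$ has a strictly positive $\tilde b$-component when $z$ is strictly past $x_i$, the $e_{i+1}$-coefficient $s$ of $\tilde z$ is positive. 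Relative to the paper's citation, your approach gains verifiability on the page using only the paper's own Lemmas \ref{lemma:mink} and \ref{lemma:treno} plus Pick's theorem; what it does not foreground is the continued-fraction recurrence $e_{i+1}=a_ie_i-e_{i-1}$ that organizes the Fulton/Oda treatment, though you do invoke it for (iv).
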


\begin{proof}
This is the affine counterpart  of 
\cite[2.6]{ful} or  \cite[Proposition 1.19]{oda}.
The computability of the
map  
$A\mapsto \mathsf{HJ}(A)$
 is  evident  by definition  of
the Hirzebruch-Jung algorithm.
\end{proof}

The Hirzebruch-Jung regular triangulation of
 the   segment $\conv(-1/2,5/8)$  is shown  in 
  Figure \ref{figure:hjtris}.

\subsection*{Blow-up and blow-down}
Suppose $\Delta$ and $\nabla$
are two simplicial complexes in $\mathbb R^n$ with the same
support.
We say that  $\nabla$ is  
 a  {\it subdivision} of  $\Delta$
 if   every simplex of $\nabla$ is
 contained in some simplex of $\Delta$.
 Let $\Delta$ be a simplicial
 complex  and $c \in |\Delta|$.
 The {\it blow-up $\Delta_{(c)}$ of $\Delta$ at
 $c$} is the simplicial complex in $\mathbb R^n$
  obtained  
 by the following  procedure
 (\cite[p. 376]{wlo},   \cite[III, 2.1]{ewa}):
 \begin{quote}
 Replace every simplex $S \in \Delta$ such that $c\in S$ by the set
of all  simplexes of the form $\conv(c,F)$, where
 $F$ is any face of $S$ such that   $c\notin F$.
 \end{quote}
 Note that $\Delta_{(c)}$ is a subdivision of $\Delta$
  with the same support
 of $\Delta$.
 The inverse of a blow-up is called a {\it blow-down}.

For any  $m\geq 1$ and regular $m$-simplex
$T =\conv(v_{0},\ldots,v_{m})\subseteq \mathbb R^n$,  the
{\it Farey mediant of $T$}   is the affine
correspondent
of the  vector $\tilde{v}_0+\cdots+\tilde{v}_m
\in \mathbb Z^{n+1}$,  where each
$\tilde{v}_i$ is the homogeneous correspondent
of ${v}_i$.
In the particular case when $\Delta$
is a regular triangulation and
$c$ is the {\it Farey mediant} of
a  simplex  of $\Delta$,   the blow-up
$\Delta_{(c)}$ is  regular.

\subsection*{The $\GLnZ$-invariant measure $\lambda_1$}  
The second main tool for the
classification  of rational segments in  $\GLnZ$-geometry
is the one-dimensional fragment    $\lambda_1$
of the rational measure $\lambda_d$
introduced in    \cite[Theorem 2.1]{mun-dcds}.


\medskip

For any oriented rational segments 
 $\conv(a,b)$ and $\conv(a',b')$
 in $\mathbb R^n$
we  write  $\conv(a,b)\cong conv(a',b')$ if there is 
  $\gamma\in \GLnZ$ such that  $\gamma(a)=a'$ and
  $\gamma(b)=b'$, in symbols,
  $$
  \gamma\colon \conv(a,b)\cong \conv(a',b'). 
  $$

\begin{theorem}
\label{theorem:lambdaone}
   For any  rational oriented 
   segment $\conv(a,b)\subseteq \mathbb R^n$ let
\begin{equation}
\label{equation:sum}
\lambda_1(\conv(a,b))=\sum_{i=0}^u \frac{1}{\den(x_i)\den(x_{i+1})}\,\,,
\end{equation}
where $a=x_0,\, x_1,\dots,x_u, x_{u+1}=b$ are the consecutive vertices of
the Hirzebruch-Jung desingularization  
$\mathsf{HJ}(\conv(a,b))$.
We then have:

\begin{itemize}

\medskip

\item[(i)] {\rm(Invariance)} $\conv(a,b)\cong \conv(a',b')$ 
$\Rightarrow$ 
$\lambda_1(\conv(a,b))=\lambda_1(\conv(a',b'))$.

\medskip
\item[(ii)] {\rm (Computability)}  The map $(a,b)
\mapsto\lambda_1(\conv(a,b))$
  is computable.

\medskip
\item[(iii)] {\rm (Independence)}
Let $\lambda_{1}(\conv(a,b), \nabla)$ denote the result of the
 computation of
$\lambda_{1}(\conv(a,b))$   in 
\eqref{equation:sum} by means of a regular triangulation
 $\nabla$ of $\conv(a,b)$, in place of 
 $\mathsf{HJ}(\conv(a,b))$. Then
 $\lambda_1(\conv(a,b))$ = $\lambda_{1}(\conv(a,b), \nabla)$. 

\medskip
\item[(iv)] {\rm (Monotonicity)} For every rational point
$c \in\mathbb R^n$ with
  $\conv(a,b) \subsetneqq \conv(a,c)$
we have 
$\lambda_1(\conv(a,b)) < \lambda_1(\conv(a,c)).$
\end{itemize}
\end{theorem}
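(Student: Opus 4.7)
My plan rests on a single harmonic identity. If $\conv(x,y)\subseteq\mathbb R^n$ is a regular $1$-simplex and $z$ denotes its Farey mediant (so $\tilde z=\tilde x+\tilde y$ and $\den(z)=\den(x)+\den(y)$), a direct computation gives
\[
\frac{1}{\den(x)\den(z)}+\frac{1}{\den(z)\den(y)}=\frac{\den(x)+\den(y)}{\den(x)\den(y)\den(z)}=\frac{1}{\den(x)\den(y)}.
\]
Thus the Farey blow-up of a regular segment leaves the corresponding summand in \eqref{equation:sum} unchanged. I will derive (iii) from this, then obtain (i), (ii) and (iv) as corollaries.

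For (iii), let $\nabla$ be any regular triangulation of $\conv(a,b)$. By Proposition \ref{proposition:hj}(iii) the vertices of $\mathsf{HJ}(\conv(a,b))$ occur among the vertices of $\nabla$, so $\nabla$ is a regular subdivision of $\mathsf{HJ}(\conv(a,b))$. I claim that on each regular segment $\conv(x_i,x_{i+1})$ of $\mathsf{HJ}$, the induced refinement in $\nabla$ is obtained by iterated Farey blow-ups. The key local step: if $z$ is a rational point in the relative interior of a regular segment $\conv(x,y)$ with both $\conv(x,z)$ and $\conv(z,y)$ regular, then $z$ must be the Farey mediant. Indeed, since $\{\tilde x,\tilde y\}$ is part of a basis of $\mathbb Z^{n+1}$ and $\tilde z$ is a primitive integer vector in $\pos[\tilde x,\tilde y]$, write $\tilde z=\alpha\tilde x+\beta\tilde y$ with $\alpha,\beta\in\mathbb Z_{>0}$; the regularity of $\conv(x,z)$ together with Lemma \ref{lemma:treno}-type reasoning (expressing $\tilde y$ in the basis $\{\tilde x,\tilde z\}$) forces $\beta=1$, and by symmetry $\alpha=1$. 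Iterating this local step, $\nabla$ is reached from $\mathsf{HJ}(\conv(a,b))$ by a finite sequence of Farey blow-ups, and the displayed identity gives $\lambda_1(\conv(a,b),\nabla)=\lambda_1(\conv(a,b))$.

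Part (i) is then immediate: any $\gamma\in\GLnZ$ sends $\mathbb Z^{n+1}$-bases to $\mathbb Z^{n+1}$-bases and preserves denominators (as it acts linearly on homogeneous correspondents), so $\gamma(\mathsf{HJ}(\conv(a,b)))$ is a regular triangulation of $\conv(a',b')$ whose vertices have the same denominators as those of $\mathsf{HJ}(\conv(a,b))$; by (iii) this triangulation may be used to compute $\lambda_1(\conv(a',b'))$, which then equals $\lambda_1(\conv(a,b))$. Part (ii) follows from Proposition \ref{proposition:hj}(iv) together with rational arithmetic on the resulting denominators. For (iv), since $a$ and $b$ are the endpoints of the rational segment $\conv(a,b)$ and $b$ lies strictly between $a$ and $c$ with $c$ rational, both $\conv(a,b)$ and $\conv(b,c)$ are rational segments of positive length; their Hirzebruch-Jung triangulations agree at the common vertex $b$, and their union is a regular triangulation $\nabla$ of $\conv(a,c)$ containing $b$ as a vertex. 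Splitting the sum \eqref{equation:sum} at $b$ and invoking (iii) twice,
\[
\lambda_1(\conv(a,c))=\lambda_1(\conv(a,c),\nabla)=\lambda_1(\conv(a,b))+\lambda_1(\conv(b,c))>\lambda_1(\conv(a,b)),
\]
where strict positivity of $\lambda_1(\conv(b,c))$ is visible from its definition as a sum of strictly positive rational terms.

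The main obstacle is the local rigidity step inside (iii): showing that a regular interior vertex of a regular $1$-simplex is forced to be the Farey mediant. Everything else is essentially bookkeeping around this fact and Proposition \ref{proposition:hj}.
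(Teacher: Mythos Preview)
Your route to (iii) is genuinely different from the paper's, and more elementary. The paper invokes the Morelli--W\l odarczyk factorization theorem to connect $\mathsf{HJ}(\conv(a,b))$ and an arbitrary regular $\nabla$ by a chain of Farey blow-ups \emph{and} blow-downs, checking via the harmonic identity that each step preserves the sum. You instead use Proposition~\ref{proposition:hj}(iii) to see that $\nabla$ already refines $\mathsf{HJ}$, so that only blow-ups are needed and the weak Oda machinery can be bypassed. (The paper's own parenthetical remark in the proof of (iii) hints that this shortcut is available in dimension one.) Your treatments of (i), (ii), (iv) are essentially the same as the paper's, except that you derive (i) from (iii) rather than directly from the uniqueness of $\mathsf{HJ}$; both are fine.

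There is, however, a real gap hiding behind ``iterating this local step''. Your local step treats a \emph{single} interior vertex: if $\conv(x,y)$ is regular and $z$ is an interior subdivision point with $\conv(x,z)$ and $\conv(z,y)$ regular, then $z$ is the Farey mediant of $x,y$. But when $\nabla$ inserts several vertices $w_1,\dots,w_m$ into $\conv(x_i,x_{i+1})$, the induction requires exhibiting \emph{some} $w_j$ which is the Farey mediant of its neighbours $w_{j-1},w_{j+1}$, and your local step only applies once $\conv(w_{j-1},w_{j+1})$ is itself known to be regular---which is not given. A clean fix: pass to the rank-two sublattice with basis $\tilde x_i,\tilde x_{i+1}$, so that these become $(1,0),(0,1)$ and their Farey mediant corresponds to $(1,1)$. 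If $(1,1)$ lay in the open cone $\pos[\tilde w_j,\tilde w_{j+1}]$ for some $j$, then writing $(1,1)=\alpha\tilde w_j+\beta\tilde w_{j+1}$ with $\alpha,\beta\in\mathbb Z_{\ge 1}$ and comparing coordinate sums yields $2=\alpha h_j+\beta h_{j+1}$ with each $h\ge 1$, forcing $\alpha=\beta=h_j=h_{j+1}=1$; but then $\{\tilde w_j,\tilde w_{j+1}\}=\{(1,0),(0,1)\}$, contradicting $m\ge 1$. Hence $(1,1)=\tilde w_k$ for some interior $k$, both halves $\conv(x_i,w_k)$ and $\conv(w_k,x_{i+1})$ are regular, and you may recurse on each. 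With this patch your argument for (iii) is complete.
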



\begin{proof} (i)-(ii) These are immediate consequences of  
{Proposition  \ref{proposition:hj}(i)-(ii)},
 because every map  $\gamma\in \GLnZ$
preserves denominators and regularity.

\medskip

(iii)  
Let  $\Delta=\mathsf{HJ}(\conv(a,b))$.
 Let  $\Delta_{(e)}$
be  obtained 
  by blowing-up $\Delta$
  at the Farey mediant
  $e$ of some
$1$-simplex $S=\conv(x_{i},x_{i+1}) \in \Delta$.
From the regularity of $S$ we get
$\den(e) =    \den(x_{i})+\den(x_{i+1}).$
Then a routine verification shows 
 that  $\lambda_{1}(\conv(a,b), \Delta)=
\lambda_{1}(\conv(a,b), \Delta_{(e)}).$ 
The affine version of the
Morelli-W{\l}odarczyk
theorem on decomposition of
birational toric maps (solution of the weak Oda conjecture,
\cite{mor}, \cite[13.3]{wlo}), 
yields  a sequence
$$\Delta_{0}=\Delta,\Delta_{1},\ldots, \Delta_{r-1},
\Delta_{r}=\nabla$$
of regular triangulations of $\conv(a,b)$ such that
for each $t=0,\ldots,r-1$,
$\Delta_{t+1}$ is obtained from
$\Delta_{t}$ by a blow-up
at the Farey mediant  of
some simplex  of  $ \Delta_{t}$,
or vice versa,  with the roles of $t$
and $t+1$ interchanged.  (Actually, in the
present one-dimensional case we may insist that
all blow-ups precede all blow-downs.)  As  in the case
$t=0$, also for  each  $t =1,2,\ldots,r-1,$ we have the identity
$\lambda_{1}(\conv(a,b),\Delta_{t})=\lambda_{1}(\conv(a,b),\Delta_{t+1})$.

\medskip
(iv)  Let $\Delta'$
and $\Delta''$  be the Hirzebruch-Jung 
desingularizations of
$\conv(a,b)$ and $(\conv(b,c)$  respectively. 
  Then $\Delta'\cup \Delta''$ determines a regular 
  complex with support $\conv(a,c).$ By
\eqref{equation:sum} and (iii),
$$\lambda_1(\conv(a,b)) < \lambda_1(\conv(a,c),\Delta'\cup\Delta'')
=\lambda_1(\conv(a,c)).$$ 
The proof is complete. 
\end{proof}

\medskip

\begin{theorem}
[Rational oriented segments in  $\GLnZ$-geometry]
\label{theorem:quadruple}
Let 
$A=\conv(a,b)$
be a rational oriented segment in $\mathbb R^n$.
Let  $x_1\not=a$ be the point nearest to $a$ in the
Hirzebruch-Jung triangulation of $A$.
Then the quadruple 
$$\mathsf{side} (A)=\{c_{\aff(A)},\,\,\lambda_1(A),\,\,
\den(a),\,\,\den(x_1)\}$$
is  a  
 computable complete  
  $\GLnZ$-orbit invariant for $A$.
If  $n\not=2$,   the integer
 $c_{\aff(A)}$  is redundant.
\end{theorem}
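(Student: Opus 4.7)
The plan is to verify computability, invariance, and completeness in turn. For computability, $\den(a)$ is trivial, $x_1$ is the second vertex produced by the Hirzebruch--Jung algorithm (Proposition~\ref{proposition:hj}(iv)), whence $\den(x_1)$ is computable; $\lambda_1(A)$ is computable by Theorem~\ref{theorem:lambdaone}(ii); and $c_{\aff(A)}$ is computable by Corollary~\ref{corollary:affine-subspace}. For invariance, a map $\gamma\in\GLnZ$ preserves denominators and the regularity of simplices, hence by the canonicity of the Hirzebruch--Jung triangulation (Proposition~\ref{proposition:hj}(iii)) it carries $\mathsf{HJ}(A)$ onto $\mathsf{HJ}(A')$ vertex-by-vertex; in particular $\gamma(x_1)=x'_1$, so $\den(x_1)=\den(x'_1)$. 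The invariance of $\lambda_1$ comes from Theorem~\ref{theorem:lambdaone}(i), and that of $c_{\aff(A)}$ from Theorem~\ref{theorem:classification} applied to $\aff(A)$.

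The heart of the argument is completeness: assume $\mathsf{side}(A)=\mathsf{side}(A')$ and produce $\gamma\in\GLnZ$ with $\gamma(a)=a'$ and $\gamma(b)=b'$. First I build $\gamma$ sending $\conv(a,x_1)$ onto $\conv(a',x'_1)$. Applying Theorem~\ref{theorem:dddccc} to the rational line $F=\aff(A)$ yields a regular $n$-simplex $\conv(v_0,v_1,v_2,\dots,v_n)$ with $v_0,v_1\in F$ of denominator $d_F$ and $v_j$ of denominator $c_F$ for $j\ge 2$. Since $\conv(a,x_1)\subseteq F$ is itself a regular $1$-simplex with $\aff(a,x_1)=\aff(v_0,v_1)$, the Steinitz-exchange Lemma~\ref{lemma:steinitz} (case $e=1$) lets me replace $v_0,v_1$ by $a,x_1$, producing a regular $n$-simplex $\conv(a,x_1,v_2,\dots,v_n)$. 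Doing the same construction on $F'=\aff(A')$ yields $\conv(a',x'_1,v'_2,\dots,v'_n)$. By hypothesis $\den(a)=\den(a')$, $\den(x_1)=\den(x'_1)$, and $\den(v_j)=c_{\aff(A)}=c_{\aff(A')}=\den(v'_j)$ for $j\ge 2$ (these equal $1$ unless $n=2$, which is exactly why $c_{\aff(A)}$ is redundant when $n\ne 2$). Lemma~\ref{lemma:affinemap} then furnishes a unique $\gamma\in\GLnZ$ matching the two $(n+1)$-tuples of vertices; in particular $\gamma(a)=a'$, $\gamma(x_1)=x'_1$, and $\gamma(\aff(A))=\aff(A')$.

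It remains to show $\gamma(A)=A'$. The segment $\gamma(A)$ lies in $\aff(A')$, starts at $\gamma(a)=a'$, and passes through the interior point $\gamma(x_1)=x'_1$, so it extends from $a'$ in the same direction as $A'$; therefore $\gamma(A)\subseteq A'$ or $A'\subseteq\gamma(A)$. By Theorem~\ref{theorem:lambdaone}(i) and the hypothesis, $\lambda_1(\gamma(A))=\lambda_1(A)=\lambda_1(A')$, and strict monotonicity (Theorem~\ref{theorem:lambdaone}(iv)) now forces $\gamma(A)=A'$, i.e., $\gamma(b)=b'$, completing the proof. I expect the main obstacle to be the simultaneous Steinitz-extension in the second paragraph: one must arrange matching denominators on both sides so that Lemma~\ref{lemma:affinemap} applies, and this is precisely what the components $\den(a),\den(x_1),c_{\aff(A)}$ of $\mathsf{side}(A)$ are designed to guarantee; the closing $\lambda_1$-monotonicity step is then short and clean.
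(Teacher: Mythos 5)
Your proof is correct, and the computability and invariance parts are essentially identical to the paper's. The interesting difference is in the completeness direction. The paper first introduces an intermediate invariant $\mathsf{hj}(S)=(\den(\mathsf{HJ}(S)),c_{\aff(S)})$, the full sequence of HJ vertex denominators, and proves (Claim 1) that $\mathsf{hj}$ is itself a complete orbit invariant, by inductively constructing maps $\gamma_t$ matching each HJ edge $\conv(x_t,x_{t+1})$ to $\conv(x'_t,x'_{t+1})$ and showing $\gamma_0=\gamma_1=\cdots=\gamma_u$. Only then, in Claim 2, does it argue that $\mathsf{side}$ determines $\mathsf{hj}$ via the $\lambda_1$-monotonicity contradiction. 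Your route skips the $\mathsf{hj}$ detour entirely: you build a single $\gamma$ from the first HJ edge (exactly the paper's base case, via Theorem~\ref{theorem:dddccc}, Lemma~\ref{lemma:steinitz} and Lemma~\ref{lemma:affinemap}), observe that $\gamma(A)$ and $A'$ are both rays from $a'$ through $x'_1$ so one is nested in the other, and let Theorem~\ref{theorem:lambdaone}(i),(iv) finish. That nested-segments observation replaces the paper's inductive propagation through Proposition~\ref{proposition:hj}(i)--(ii), and it is a genuine simplification for the purposes of this theorem. What the paper's longer route buys is the intermediate result Claim~1, which it then reuses in the proof of Theorem~\ref{theorem:triangle} (to conclude that $\gamma$ matches the full lists of HJ vertices once $\mathsf{side}$ agrees); your proof does not establish that byproduct, so if one were rewriting the paper one would still need Claim~1 somewhere downstream.
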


\begin{proof} 
For any rational oriented segment $S$ in $\mathbb R^n$
let us write 
$$\mathsf{hj}(S)=(\den(\mathsf{HJ}(S)), c_{\aff(S)}),$$
 where
$\den(\mathsf{HJ}(S))=
(\den(s_0), \den(s_1),\dots,\den(s_q),\den(s_{q+1}))
$
is  the sequence 
of the denominators of the vertices of
the Hirzebruch-Jung triangulation  $\mathsf{HJ}(S)$,
  listed in the $\sqsubseteq$-order. 
By Corollary \ref{corollary:affine-subspace}, 
the  integer
$c_{\aff(S)}$ is computable.
{By Proposition \ref{proposition:hj}(iv),} the map
$S\mapsto $ $\mathsf{hj}(S)$ 
is  computable.

\medskip
\noindent
{\it Claim 1.} 
For any  oriented rational segment
$A'=\conv(a',b')$
 in $\mathbb R^n$ we have  $\conv(a,b)\cong \conv(a',b')$  iff 
 $\mathsf{hj}(A)=\mathsf{hj}(A').
$

\smallskip
$(\Rightarrow)$
Suppose  $\eta\colon A\cong A'$
for some $\eta\in \GLnZ$.
 By Theorem \ref{theorem:classification},  
 $c_A=c_{A'}$.
{By Proposition \ref{proposition:hj}(i)-(ii), } 
the two  sequences of denominators in $\mathsf{HJ}(A)$ and
$\mathsf{HJ}(A')$ coincide,
 because $\eta$ preserves denominators
and regularity. 
So $\mathsf{hj}(A)=\mathsf{hj}(A').$

\smallskip
$(\Leftarrow)$
Conversely, suppose 
$\mathsf{hj}(A)=\mathsf{hj}(A')$.
Let \,\,\,
$a= x_0,\,x_1,\dots,x_u,x_{u+1}=b$\,\,\, be the vertices of \,\,\,$\mathsf{HJ}(A),$
\, and \,\,\,
$a'= x'_0,\,x'_1,\dots,x'_u,x'_{u+1}=b'$ be the vertices of $\mathsf{HJ}(A')$.
By hypothesis,  $\den(x_i)=\den(x_i')$
for each $i=0,\dots, u+1$. Further,
 $c_{\aff(A)}=c_{\aff(A')}$.
Since $\conv(x_0,x_1)$ is regular, combining
Lemma  \ref{lemma:steinitz}  with
  Theorem   \ref{theorem:dddccc},
we have   rational points
 $w_2,\dots,w_n$, all with the same  
 denominator  $c_{\aff(A)}$, such  that 
 $\conv(x_0,x_1,w_2,\dots,w_n)$
is a regular $n$-simplex in $\mathbb R^n.$
%
Symmetrically, 
Lemma  \ref{lemma:steinitz}  and 
  Theorem   \ref{theorem:dddccc}
  yield a regular
$n$-simplex $\conv(x'_0,x'_1,w'_2,\dots,w'_n)$ in $\mathbb R^n$,
where $\den(w'_2)=\dots=\den(w'_n)= c_{\aff(A')}$.
It follows that the   denominators of the vertices of
$\conv(x'_0,x'_1,w'_2,\dots,w'_n)$ and
of $\conv(x_0,x_1,w_2,\dots,w_n)$  
are pairwise equal.
(Note that  $d_{\aff(A)}=d_{\aff(A')}$ follows from our standing
hypothesis $\mathsf{hj}(A)=\mathsf{hj}(A')$.)
Iterating this construction
for each  $t=0,\dots,u,$ we obtain
regular $n$-simplexes
$\conv(x'_t,x'_{t+1},w'_2,\dots,w'_n)$
and $\conv(x_t,x_{t+1},w_2,\dots,w_n)$
such that the denominators of their vertices  
are pairwise equal.  Thus 
  Lemma  \ref{lemma:affinemap}
yields    $\gamma_t\in \GLnZ$ 
satisfying 
   $\gamma\colon \conv(x_t,x_{t+1},w_2,\dots,w_n)
   \cong \conv(x'_t,x'_{t+1},w'_2,\dots,w'_n)$.  The map
   $t\mapsto \gamma_t$ is computable.
    
We also have 
\begin{equation}
\label{equation:gammas}
\gamma_0=\gamma_1=\dots=\gamma_u.
\end{equation}
Indeed, let  us compare 
$\gamma_0$ and $\gamma_1$. On the one hand,
  $\gamma_0$ maps     
  $\conv(x_0,x_1)$  onto $\conv(x'_0,x'_1)$, and
  $\gamma_1$ maps 
  $\conv(x_1,x_2)$ onto  $\conv(x'_1,x'_2)$.  
  On the other hand, 
  $\conv(x_1,x_2)$   is mapped onto
  $\conv(x'_1,\gamma_0(x_2))$ by 
$\gamma_0$, and is mapped onto $\conv(x'_1,x'_2)$ 
by  $\gamma_1$.   
Both segments  $\conv(x'_1,\gamma_0(x_2))$ and
$\conv(x'_1,x'_2)$ 
are regular.  Further,  $\den(x'_2)=\den(\gamma_1(x_2))=\den(\gamma_0(x_2))$,
because  $\gamma_0$ and $\gamma_1$ preserve denominators
and regularity.
By Lemma  \ref{lemma:mattutino}(i) and {Proposition
\ref{proposition:hj}(i)-(ii)}, 
$\gamma_1(x_2)=x'_2=\gamma_0(x_2).$
%
 Thus
$\gamma_0$ and $\gamma_1$ agree over
 the  segment $\conv(x_1,x_2)$,
whence they agree over all of $\mathbb R^n$, because both
$\gamma_0$ and $\gamma_1$  
send $w_2,\dots,w_n$  to $w'_2,\dots,w'_n$.
Inductively,  $\gamma_{j-1}$ agrees with $\gamma_{j}$
 over the segment $\conv(x_j,x_{j+1})$ whence
 $\gamma_j=\gamma_{j-1}$.
 Thus every $\gamma_j$ agrees with $\gamma_0$, 
 as required to settle \eqref{equation:gammas} and 
Claim 1.

\medskip
\noindent
{\it Claim 2.}  
Let $A=\conv(a,b)$ and $A'=\conv(a',b')$
be  oriented rational segments
 in $\mathbb R^n$. Then $\conv(a,b)\cong \conv(a',b')$  iff 
$\mathsf{side}(A)=\mathsf{side}(A').$

 \smallskip
  Let  
$
(x_0=a,\,\,x_1,\dots,x_l,\,\,x_{l+1}=b)
\,\,\,\mbox{ and }\,\,\,
(x'_0=a',\,\,x'_1,\dots,x'_m,\,\,x'_{m+1}=b')
$
be the lists of vertices of the Hirzebruch-Jung
desingularizations of $A$ and $A'$.
Suppose  $\eta\in \GLnZ$ maps  $A$ onto $A'$.
Then $c_{\aff(A)} = c_{\aff(A')}$.
By  Claim 1($\Rightarrow$),
$\mathsf{hj}(A)=\mathsf{hj}(A')$. 
Then \eqref{equation:sum}  yields 
  $\lambda_1(A)=
\lambda_1(A')$.  {By  Proposition
\ref{proposition:hj}(i),}
both segments
$\conv(x_0,x_1)$ and $\conv(x_0',x_1')$ are regular.    
{By Proposition
\ref{proposition:hj}(ii)},   $\eta(x_1)=x'_1$.
It follows that
$\mathsf{side} (A)=\mathsf{side} (A')$. 
 
\smallskip 
Conversely, assume
$\mathsf{side} (A)=\mathsf{side} (A')$.
%
We will prove
\begin{equation}
\label{equation:gammab}
l=m \mbox{ and for some 
$\gamma\in \GLnZ$, } \gamma(x_i)=x_i' \mbox{ for all }i=1,2,\dots, l+1.
\end{equation}
By way of contradiction, assume  $l<m$.
Since by hypothesis $\mathsf{side} (A)=\mathsf{side} (A')$, then
 $\mathsf{hj}(\conv(a,x_1))=\mathsf{hj}(\conv(a',x_1')).$ 
The  basis in the inductive construction in
Claim 1$(\Leftarrow)$ now yields  a map 
$\gamma\in \GLnZ$ of $\conv(a,x_1)$ 
 onto $\conv(a',x_1').$
{By Proposition
\ref{proposition:hj}(i)-(ii)}, $\gamma(x_2)=x'_2.$
Inductively,  $\gamma(x_i)=x'_i$
for each $i=1,\dots,l+1.$ Thus $\gamma(A)$ is a proper subset
of  $A'$.
{By Theorem 
\ref{theorem:lambdaone} (iv)},  
$$
  \lambda_1(\conv(a', \gamma(b)))
<  \lambda_1(\conv(a',b')).
$$
On the other hand, since  $\mathsf{side} (A)=\mathsf{side} (A')$, 
{from Theorem  
\ref{theorem:lambdaone} (i) we get}
$$
\lambda_1(\conv(a',b'))=\lambda_1(\conv(a,b))
= \lambda_1(\conv(\gamma(a), \gamma(b)))=
  \lambda_1(\conv(a', \gamma(b))),
$$
a contradiction that settles \eqref{equation:gammab}.
In case  $l>m$, arguing by contradiction
one similarly  proves  \eqref{equation:gammab}. 
Thus $\gamma\colon A\cong A'.$ 
 The proof of Claim 2 is complete.

 \smallskip
 By Theorem \ref{theorem:lambdaone}(ii),
the  rational   $\lambda_1(A)$ is computable.
Therefore, the map $A\mapsto \mathsf{side}(A) $ is computable.
The redundancy of $c_{\aff(A)}$ 
for  all $n\not=2$, follows from
 Lemma \ref{lemma:cod1}.
\end{proof}

Let $\conv(u,v,w)$ be  a rational 2-simplex
in $\mathbb R^n$,  with the orientation
$u\to v\to w$. We say that
 $\conv(u,v,w)$ is an {\it oriented rational triangle.} 
For any oriented rational triangle  $\conv(u',v',w')$
 in $\mathbb R^n$
we  write  $\conv(u,v,w)\cong \conv(u',v',w')$ if  
there is 
  $\gamma\in \GLnZ$ with $\gamma(u)=u'$,
  $\gamma(v)=v'$ and $\gamma(w)=w'$,
   in symbols,
  $$
  \gamma\colon \conv(u,v,w)\cong \conv(u',v',w'). 
  $$
The  rational segments $\conv(v,u)$ and $\conv(v,w)$
determine   two rational half-lines  
$H_{vu} \subseteq  \aff(\conv(v,u))$ 
 and $K_{vw} \subseteq  \aff(\conv(v,w))$ with 
 their common vertex  $v$.
 We then have   the  (nontrivial )  angle
 $ (H_{vu},K_{vw}) \subseteq \mathbb R^n$.

The  following theorem is a 
counterpart  for  $\GLnZ$-geometry
of the
side-angle-side  criterion for congruent triangles
in euclidean geometry:

\begin{theorem}
[Rational oriented triangles in $\GLnZ$-geometry]
\label{theorem:triangle} 
 A   computable  complete  invariant 
of any rational oriented triangle  
$T=\conv(u,v,w)\subseteq  \mathbb R^n$ in
$\GLnZ$-geometry
is given by 
$$\mathsf{tri} (T) =
(\mathsf{side} (\conv(v,u)),\,\,\,
\mathsf{angle} (H_{vu},K_{vw}) ,\,\,\,
\mathsf{side} (\conv(v,w))).
$$
\end{theorem}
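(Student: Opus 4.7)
The plan is to verify computability of $\mathsf{tri}(T)$ and then establish both directions of completeness, with the converse being the substantive step. Computability is immediate from Theorem \ref{theorem:quadruple} (which provides a Turing machine computing $\mathsf{side}$ of a rational oriented segment) and Theorem \ref{theorem:angle} (the same for $\mathsf{angle}$), together with the routine linear-algebra step of extracting the half-lines $H_{vu},K_{vw}$ from the rational vertices $u,v,w$. The forward implication is equally direct: if $\gamma\in\GLnZ$ realises $T\cong T'$, then $\gamma$ carries the ordered segments $\conv(v,u),\conv(v,w)$ onto $\conv(v',u'),\conv(v',w')$ and the half-line pair $(H_{vu},K_{vw})$ onto $(H_{v'u'},K_{v'w'})$, so invariance of $\mathsf{side}$ and $\mathsf{angle}$ yields $\mathsf{tri}(T)=\mathsf{tri}(T')$.

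The main content is the converse. Assuming $\mathsf{tri}(T)=\mathsf{tri}(T')$, I would first apply Theorem \ref{theorem:angle} to the central coordinate to obtain $\gamma\in\GLnZ$ with $\gamma(H_{vu})=H_{v'u'}$ and $\gamma(K_{vw})=K_{v'w'}$; necessarily $\gamma(v)=v'$, since $v$ is distinguished as the common origin of the two half-lines. It then suffices to show $\gamma(u)=u'$ and $\gamma(w)=w'$. Since $u\in H_{vu}$, the image $\gamma(u)$ lies on $H_{v'u'}$, the same rational half-line from $v'$ on which $u'$ lies. Invariance of $\lambda_1$ (Theorem \ref{theorem:lambdaone}(i)) combined with the hypothesis that the two $\mathsf{side}$-invariants agree gives
\[
\lambda_1(\conv(v',\gamma(u)))=\lambda_1(\conv(v,u))=\lambda_1(\conv(v',u')).
\]
Strict monotonicity of $\lambda_1$ along $H_{v'u'}$ (Theorem \ref{theorem:lambdaone}(iv)) therefore forces $\gamma(u)=u'$, and the symmetric argument with $K_{vw}$ in place of $H_{vu}$ yields $\gamma(w)=w'$. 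Hence $\gamma\colon T\cong T'$, completing the proof of completeness.

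The main obstacle will be resisting the natural but incorrect strategy of extracting three independent maps in $\GLnZ$ (one from each coordinate of $\mathsf{tri}$) and trying to glue them: a map witnessing $\conv(v,u)\cong\conv(v',u')$ need not respect the second side, and will generically disagree with a map witnessing the equality of the angle components. The correct strategy commits at the outset to the single $\gamma$ produced by the angle invariant, and then uses the two $\mathsf{side}$-invariants only in their \emph{rigidity} role via strict monotonicity of $\lambda_1$ along a fixed rational half-line from the common vertex; this is precisely what pins down the remaining vertices $u$ and $w$ without invoking any further freedom.
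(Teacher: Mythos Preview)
Your proof is correct and takes a genuinely different (and cleaner) route from the paper's.

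Both arguments begin by extracting a single $\gamma\in\GLnZ$ from the equality of the angle invariants, so that $\gamma(v)=v'$, $\gamma(H_{vu})=H_{v'u'}$, and $\gamma(K_{vw})=K_{v'w'}$. The divergence is in how one then pins down the endpoints $u,w$. The paper reconstructs $\gamma$ explicitly via the regular $n$-simplexes $R_*,R'_*$ built from $(\mathsf{q}_H,v,\mathsf{p}_{HK})$, and then argues that the equality $\mathsf{side}(\conv(v,u))=\mathsf{side}(\conv(v',u'))$ forces $\mathsf{hj}(\conv(v,u))=\mathsf{hj}(\conv(v',u'))$; tracking the Hirzebruch--Jung vertices one by one under $\gamma$ then yields $\gamma(u)=u'$. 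You bypass the HJ bookkeeping entirely: once $\gamma(u)$ and $u'$ are known to lie on the same rational half-line $H_{v'u'}$ emanating from $v'$, the single datum $\lambda_1(\conv(v,u))=\lambda_1(\conv(v',u'))$ together with strict monotonicity (Theorem~\ref{theorem:lambdaone}(iv)) already forces $\gamma(u)=u'$. Your argument thus shows, incidentally, that of the four components of $\mathsf{side}$ only $\lambda_1$ is needed here, the remaining data $(c_{\aff},\den(a),\den(x_1))$ being implicitly determined by the angle invariant. What the paper's approach buys is an explicit, uniform recipe for $\gamma$ in the spirit of the paper's computability emphasis; what yours buys is brevity and a sharper identification of which part of the invariant does the work.
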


\begin{proof} Let $T'=\conv(u',v',w')$ be another rational
oriented  triangle in $\mathbb R^n.$

If  $\gamma\colon T\cong T'$ for some $\gamma\in \GLnZ$,
then by Theorems
 \ref{theorem:angle} and   \ref{theorem:quadruple},  
 $\mathsf{tri} (T)=\mathsf{tri} (T')$.

 Conversely, suppose 
  $\mathsf{tri} (T)=\mathsf{tri} (T')$.
  Let us write for short $H=H_{vu}, \,\,
  K=K_{vw},\,\, H'=H_{v'u'}, \,\,K'=K_{v'w'}.
  $
Mimicking the proof of Theorem
 \ref{theorem:angle},
we  preliminarily compute the integer  $c_{\aff(T)}$, as well as 
  the  points $\mathsf q_{H}\in H$, $\mathsf q_{K}\in K$
and   $\mathsf p_{HK}\in \aff(T)$, and  the
regular triangle  
 $R=\conv(\mathsf q_{H},v,\mathsf p_{HK})$.
 Combining  Lemma
 \ref{lemma:steinitz} and
 Theorem  \ref{theorem:dddccc},
 we extend  $R$
to a regular $n$-simplex  $R_*=\conv(\mathsf q_{H},v,
\mathsf p_{HK}, z_3,\dots,z_n)$
such that $\den(z_3)=\dots=\den(z_n)=c_{\aff(T)}.$
We similarly extend
the regular triangle  $R'=\conv(\mathsf q_{H'},v',\mathsf p_{H'K'})$
to a regular $n$-simplex  $R'_*=\conv(\mathsf q_{H'},v',\mathsf p_{H'K'}, z'_3,\dots,z'_n)$
such that $\den(z'_3)=\dots=\den(z'_n)=c_{\aff(T')}.$
Since $\mathsf{angle} (H,K)=
\mathsf{angle} (H',K')$,  then 
$c_{\aff(T)}=c_{\aff(T')}$, and both 
$R_*$ and $R'_*$  are effectively computable.
Since by hypothesis, $\mathsf{tri} (T)=\mathsf{tri} (T')$, 
 the denominators of the vertices of  
$R_*$  and $R'_*$ are pairwise equal.
Lemma \ref{lemma:affinemap}  yields  a uniquely
determined   map $\gamma\in \GLnZ$ of 
$R_*$ onto $R'_*.$
It follows that 
$\gamma\colon R \cong R', \mbox{ and hence }
\gamma\colon H \cong H'.$
{Using Proposition \ref{proposition:hj}(iv)}  we compute the
  two $(l+2)$-tuples
  %
  %
  %
  %
 %
 %
 %
 %
$$
(x_0=v,\,\,x_1,\dots,x_l,\,\,x_{l+1}=u)\,\,\, \mbox{ and }\,\,\,
(x'_0=v',\,\,x'_1,\dots,x'_m,\,\,x'_{m+1}=u')
$$
listing  the  vertices of the Hirzebruch-Jung
desingularizations  $\mathsf{HJ}(\conv(v,u))$ 
and $\mathsf{HJ}(\conv(v',u'))$.
{From the hypothesis 
$\mathsf{side} (\conv(v,u))=\mathsf{side} (\conv(v',u'))$}
it follows that $\mathsf{hj}(\conv(v,u))=\mathsf{hj}(A\conv(v',u')).$ 
Therefore,  $l=m$ and for each $i=0,\dots,l+1$,
$\gamma$ sends the 
$i$th vertex of  $\mathsf{HJ}(\conv(v,u))$
to  the $i$th vertex of $\mathsf{HJ}(\conv(v',u'))$.
Thus  $\gamma\colon \conv(v,u)\cong \conv(v',u').$   
The assumption  $\mathsf{angle} (H,K)$ =
$\mathsf{angle} (H',K')$ entails
$
\gamma\colon K\cong K',
$
whence
$\gamma\colon\widehat{HK}\cong\widehat{H'K'}.$ 
From 
  $\mathsf{side} (\conv(v,w)) = \mathsf{side} (\conv(v',w'))$ 
it follows that 
  $\gamma\colon \conv(v,w)\cong \conv(v',w').$
Summing up, 
  $\gamma\colon T\cong T'$. 

The computability of the map  \,\, $T\mapsto
\mathsf{tri}(T)$\,\, follows from
 Theorems   \ref{theorem:angle} and
 \ref{theorem:quadruple}.
\end{proof}

\section{Classification of  ellipses   in
$\GLtwoZ$-geometry}
\label{section:ellipses}

For notational  simplicity, all ellipses in this paper are assumed
to  lie in $\mathbb R^2$.
 
Our construction
of   a computable complete
invariant for 
ellipses in $\GLtwoZ$-geometry primarily rests on the 
following  properties of conjugate diameters,   recorded
by
Apollonius of Perga \cite{apo}
 and Pappus of Alexandria \cite{pap}:

 \begin{proposition}
 \label{proposition:ellipses}
  
Every ellipse  $E\subseteq \mathbb R^2$  is the image of a circle  
under a contraction with respect to some line.    
 $E$ has a unique center  of symmetry.
Calling a {\em diameter} of $E$ any chord passing through the center, 
it follows that the center of $E$ bisects any diameter. 
Let $C$ be a diameter of $E$. There is a uniquely determined
diameter  $C^*$ having the property that the middle
points of all chords of $E$ parallel to $C$ lie in $C^*$.
The latter is known as  {\em the conjugate diameter of $C$}.
We have  $C^{**}=C$.
Let  $T$ be the tangent of $E$ at a point $x\in E$.
Let $X$ be the diameter of $E$ containing $x$. 
Then the conjugate diameter 
$X^*$ is parallel to $T$. 
 \end{proposition}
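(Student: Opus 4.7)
The plan is to reduce every assertion to a classical fact about the circle, and then transfer it to the general ellipse via an appropriate affine contraction.

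For the first assertion, I would bring the defining equation of the ellipse into standard form $(x/a)^{2}+(y/b)^{2}=1$ via a Euclidean motion (using, e.g., the principal-axis theorem for the associated symmetric matrix), and then observe that the map $(u,v)\mapsto (u,(b/a)v)$ contracts the circle $u^{2}+v^{2}=a^{2}$ with respect to the $x$-axis and carries it onto this standard ellipse. Composing with the inverse of the Euclidean motion yields a contraction $\phi\colon \Sigma \to E$ of a euclidean circle $\Sigma$ onto $E$.

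Next I would record the key facts about affine maps of $\mathbb R^{2}$ needed for the transfer: they send straight lines to straight lines, preserve parallelism, preserve midpoints of segments (and hence conjugate point-reflections to point-reflections through the image point), and send tangent lines of a smooth curve to tangent lines of the image curve. Given such a $\phi$, the remaining items follow by pull-back. For (b), the center $o$ of $\Sigma$ is a center of symmetry, so $\phi(o)$ is a center of symmetry for $E$; uniqueness of the center follows from boundedness, since two distinct centers $p\neq q$ of $E$ would produce a non-trivial translation (the composition of the two point-reflections) preserving the bounded set $E$, which is impossible. Item (c) is immediate from the definition of center, since point-reflection through the center reverses any chord through it and thus fixes its midpoint. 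For (d) and (e), the conjugate diameter of any diameter of $\Sigma$ is the perpendicular diameter---evidently unique and satisfying $C^{**}=C$---and midpoint-preservation under $\phi$ transfers both statements to $E$: the locus of midpoints of chords of $E$ parallel to $\phi(C)$ is precisely the $\phi$-image of the locus of midpoints of chords of $\Sigma$ parallel to $C$. Item (f) follows because the tangent to $\Sigma$ at a point $x$ is perpendicular to the radius through $x$, hence parallel to the perpendicular diameter; applying $\phi$ and using preservation of parallelism and tangency transfers the assertion to $E$.

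The main obstacle is really the bookkeeping in the first step, namely exhibiting explicitly the euclidean change of coordinates bringing a general ellipse defined by an arbitrary quadratic polynomial into standard form, and checking that the composition with the coordinate contraction is actually an affine map of the plane. Once the contraction $\phi$ is in hand, items (b)--(f) are routine consequences of the transfer principle via affine-invariant geometric properties.
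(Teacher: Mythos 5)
Your proposal takes essentially the same route the paper intends: reduce every assertion to the circle and transfer it to $E$ via an affine bijection, using affine invariance of lines, parallelism, midpoints, and tangency. The paper compresses this into the single sentence that every planar affine transformation factors as a similarity followed by a contraction with respect to a line; your write-up simply unpacks that fact for the case at hand.

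There is one small slip in your treatment of the first assertion. You set $\phi = \psi^{-1}\circ c$, where $\psi$ is the Euclidean motion taking $E$ to the standard form $E_0$ and $c\colon(u,v)\mapsto(u,(b/a)v)$ is the coordinate contraction taking $\Sigma_0=\{u^2+v^2=a^2\}$ to $E_0$. The map $\phi$ does send the circle $\Sigma_0$ onto $E$, but it is not itself a contraction with respect to a line: post-composing a contraction with a rotation-plus-translation destroys that structure. To obtain literally what the statement asserts, conjugate instead of composing: take the circle $\Sigma=\psi^{-1}(\Sigma_0)$ (still a Euclidean circle, since $\psi^{-1}$ is an isometry) and the map $\psi^{-1}\circ c\circ\psi$, which \emph{is} a contraction with respect to the line $\psi^{-1}(\{v=0\})$, i.e., with respect to the major axis of $E$; this map carries $\Sigma$ onto $E$. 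The rest of your argument (items (b)--(f)) only uses that some affine bijection takes a circle onto $E$, so it stands as written regardless of which variant of $\phi$ you use.
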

 
 \begin{proof} All these properties  
 follow from  the fact that every
 affine  transformation can be represented as a composition of a similarity
transformation and a contraction with respect to some line.
 \end{proof}
 
 \medskip
\begin{lemma}
\label{lemma:newstart}
Let $\phi(x,y)= ax^2 + bxy + cy^2 + dx + ey + f $
be a polynomial with rational coefficients.
Then it is decidable whether the solution set
(the {\em zeroset}) 
$$
Z(\phi)= \{(x,y)\in\mathbb R^2\mid \phi(x,y)=0\}
$$
is an ellipse $E$ containing a rational point. 
Further,
whenever any such point exists in $Z(\phi)$,
the set of rational points in $E$ is dense in $E$,
and can be recursively enumerated  in the
lexicographic
order of increasing denominators.
\end{lemma}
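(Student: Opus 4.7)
The plan is to split the statement into three sub-problems and attack each with classical tools, two of which are already explicitly cited in the paper.

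First I would decide whether $Z(\phi)$ is an ellipse. Writing $\phi$ in matrix form with the symmetric $3\times 3$ matrix
\[
M_\phi=\begin{pmatrix} a & b/2 & d/2 \\ b/2 & c & e/2 \\ d/2 & e/2 & f \end{pmatrix},
\]
standard conic classification shows that $Z(\phi)$ is a non-degenerate ellipse precisely when $b^2-4ac<0$ and $\det(M_\phi)\neq 0$ (with the sign condition $(a+c)\det(M_\phi)<0$ ensuring the ellipse is real rather than empty). Each of these checks involves only computing signs of rational expressions in the input coefficients, hence is uniformly decidable by a Turing machine.

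Next I would decide whether $E=Z(\phi)$ contains a rational point. This is the classical Hasse--Minkowski / Legendre problem for a conic over $\mathbb{Q}$: by clearing denominators and homogenising, one reduces the question of a rational point on $E$ to the existence of a non-trivial integer zero of a ternary quadratic form, which is solvable by local-global methods and by the explicit effective algorithms of Cremona--Rusin \cite{crerus} and Simon \cite{sim}, already invoked in the paper's introduction. These algorithms not only decide solvability but, when a solution exists, produce one. So after running them we either know $E$ has no rational point (and stop), or we obtain an explicit rational point $p_0\in E$.

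The hard part is really the previous step; density and enumeration are then easy. For density, assume a rational point $p_0=(x_0,y_0)\in E$ has been produced. For each $t\in\mathbb{Q}$, consider the line $\ell_t$ through $p_0$ of slope $t$ (plus the vertical line, handled separately). Substituting $y-y_0=t(x-x_0)$ into $\phi=0$ yields a quadratic in $x$ with rational coefficients depending on $t$, one of whose roots is the rational number $x_0$. By Vieta the second root is rational as well, so the second intersection point $p_t=\ell_t\cap E$ lies in $E\cap\mathbb{Q}^2$. As $t$ ranges over $\mathbb{Q}$ the points $p_t$ fill a dense subset of $E$ (every arc of $E$ is met by rational-slope secants through $p_0$), proving density. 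Finally, for the recursive enumeration in the order of increasing denominators, I would simply enumerate all rational pairs $(x,y)\in\mathbb{Q}^2$ by increasing $\max(\den(x),\den(y))$ (and lexicographically within a denominator), and for each one test the decidable condition $\phi(x,y)=0$. Density guarantees that infinitely many pairs pass the test, and the enumeration is uniformly computable. This gives the required recursive enumeration in lexicographic order of increasing denominators, completing the lemma.
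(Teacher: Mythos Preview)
Your proposal is correct and follows essentially the same route as the paper: both reduce the rational-point question to the solvability of a Legendre-type ternary quadratic form (the paper cites \cite[\S 5.2]{computer} and \cite[17.3]{ireros}, you invoke Hasse--Minkowski and \cite{crerus,sim}), and both treat the ellipse classification, density and enumeration as routine. The paper's proof is terser (``The rest is clear''), whereas you spell out the conic-discriminant test and the secant-line parametrization explicitly, but the underlying ideas are identical.
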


\begin{proof}
As explained, e.g.,  in \cite[\S 5.2]{computer},
$E$ contains a rational point iff
 the Legendre
 equation
$
 px^2+qy^2+rz^2=0
$
 has an integer solution   with $\gcd(x,y,z)=1$, 
for suitable integers $p,q,r$ which  are effectively
 computable from the coefficients $a,\dots,f$.
Perusal of   \cite[17.3]{ireros} shows 
that this latter problem
 is decidable, and  whenever a solution exists it can be
 effectively computed. The rest is clear.
 (See  \cite{crerus}
 and \cite{sim} for  efficient
 computations of rational points on rational conics.)
 \end{proof}

By a {\it rational ellipse} we mean an ellipse 
$E\subseteq \mathbb R^2$ 
that coincides with  the zeroset  $Z(\phi)$
of a quadratic polynomial $\phi(x,y)$ with rational 
coefficients, and contains a rational point (equivalently, $E$
contains a dense set of rational points).
We denote by $\mathcal E$ the set of rational
ellipses. 
For any  $E\in \mathcal E$, its
 center  is
a rational point.   A diameter 
$C$ of $E$ is {\it rational}  (meaning that 
its vertices are rational)
iff so is its conjugate.
Given a rational diameter $C$ in $E$, its conjugate
 is effectively computable.
Two semi-diameters $A,B$  of $E$ are said to be {\it conjugate}
iff they  lie in  conjugate diameters of $E$.

For any map $\gamma\in \GLtwoZ$, the
 image  $E'=\gamma(E)$ of any
$E\in  \mathcal E $ is a member of $\mathcal E.$
Further,
 $\gamma$ is a denominator preserving
 one-one 
 map of all rational points
 of  $E$  onto all rational points of 
 $E'$.

 \smallskip
The proof of the following  result  now routinely follows from 
 Proposition  \ref{proposition:ellipses}:

\begin{lemma}
\label{lemma:semi-diameter}
(i)  For any pair of distinct segments $C,D$ with
a common vertex and $\aff(C)\not=\aff(D)$,   there is a unique ellipse 
$E$ such that $(C,D)$ is a pair of conjugate 
semi-diameters of $E$.\footnote{Pappus  \cite[Book VIII, \S XVII, 
 Proposition 14]{pap} constructs
 the axes of an ellipse from any given pair of conjugate 
 semi-diameters.}
Further, if
the segments $C$ and $D$ are rational
then  $E$ is a rational ellipse,
which  can be effectively obtained  from
$(C,D)$.

\smallskip
(ii) 
Two  ellipses $E,E'\in \mathcal E$   
have the same $\GLtwoZ$-orbit 
 iff there are conjugate rational semi-diameters
$A,B$ of $E$ and $A', B'$ of $E'$ such that
the triangles $T=\conv(A\cup B)$ and $T'=\conv(A'\cup B')$ 
have the same $\GLtwoZ$-orbit. Moreover, if $\delta\in \GLtwoZ$
maps $T$ onto $T'$, then $\delta$ maps $E$ onto $E'$. 
\end{lemma}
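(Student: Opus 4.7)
The plan for (i) is to realize $E$ as the image of the unit circle under an explicit affine map. Writing $C = \conv(v, a)$ and $D = \conv(v, b)$, the hypothesis $\aff(C) \neq \aff(D)$ makes $a-v$ and $b-v$ linearly independent, so the affine map
\[
\gamma(x_1, x_2) = v + x_1(a - v) + x_2(b - v)
\]
is invertible, sending the standard basis $e_1, e_2$ to $a, b$ and the origin to $v$. The unit circle $S = \{x_1^2 + x_2^2 = 1\}$ admits $\conv(0, e_1)$ and $\conv(0, e_2)$ as a pair of conjugate semi-diameters (by rotational symmetry); since affine maps preserve the conjugate-diameter relation (which is characterised via midpoints of parallel chords and parallel tangents, all affine invariants recorded in Proposition \ref{proposition:ellipses}), the image $E := \gamma(S)$ is an ellipse with $(C, D)$ as conjugate semi-diameters. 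Uniqueness is inherited from the uniqueness of $\gamma$ as an affine map pinned down by three non-collinear correspondences. When $v, a, b \in \mathbb{Q}^2$, the map $\gamma$ is rational, so substituting $\gamma^{-1}$ into $x_1^2 + x_2^2 - 1$ yields an explicit rational quadratic equation for $E$; the rational point $a = \gamma(e_1)$ shows $E \in \mathcal{E}$.

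For (ii), the $(\Rightarrow)$ direction is straightforward: given $\eta \in \GLtwoZ$ with $\eta(E) = E'$, I would pick a rational point $x$ of $E$ distinct from the (necessarily rational) centre $v$, form $A = \conv(v, x)$, and compute the rational conjugate semi-diameter $B$ by intersecting $E$ with the rational line through $v$ parallel to the rational tangent to $E$ at $x$. Then $A' := \eta(A)$ and $B' := \eta(B)$ are conjugate rational semi-diameters of $E'$ (because $\eta$ preserves midpoints, parallelism, and tangent lines), and $\eta$ transports $T = \conv(A \cup B)$ onto $T' = \conv(A' \cup B')$.

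For $(\Leftarrow)$, suppose $\delta \in \GLtwoZ$ maps $T$ onto $T'$. Reading $T, T'$ under the oriented-triangle convention of Theorem \ref{theorem:triangle}, with the common vertex of $A, B$ placed first as the distinguished vertex, equivalence of the oriented triangles forces $\delta(v) = v'$. Consequently $\delta(A), \delta(B)$ are segments from $v'$ whose remaining endpoints are the other two vertices of $T'$, so as an unordered pair $\{\delta(A), \delta(B)\} = \{A', B'\}$. Since $\delta$ is affine and hence preserves the conjugate-diameter relation, $\delta(E)$ is an ellipse with $(A', B')$ as a pair of conjugate semi-diameters; by the uniqueness in (i), $\delta(E) = E'$.

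The main hurdle is matching the centre vertex under $\delta$ in $(\Leftarrow)$: if one regards $T, T'$ merely as point-sets, a $\delta$ mapping $T$ onto $T'$ could in principle send $v$ to a non-central vertex of $T'$ and carry $E$ to a different ellipse (a cyclic shuffle of the three vertices of a triangle by a $\GLtwoZ$-element is perfectly possible). The resolution is to read $T, T'$ as oriented triangles with the centre vertex distinguished, so that the $\GLtwoZ$-equivalence assumed in the hypothesis automatically respects this distinguished vertex; the rest of the argument then reduces to part (i) combined with Proposition \ref{proposition:ellipses}.
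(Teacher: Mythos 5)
Your proof is correct and supplies exactly the details the paper treats as "routine" (the paper does not write out a proof of this lemma, merely asserting it follows from Proposition \ref{proposition:ellipses}). Your route — realizing $E$ as the affine image of the unit circle under the map $\gamma$ pinned down by $(v,a,b)$, and transporting the conjugate-diameter relation via its affine-invariant characterization — is the intended one. Two remarks. First, on uniqueness in (i): the phrase "inherited from the uniqueness of $\gamma$" is slightly loose, since replacing $e_1\mapsto a$ by $e_1\mapsto 2v-a$ gives a different affine map with the same image circle; the cleaner argument is that, working in coordinates where $a-v,b-v$ are the standard basis, the positive-definite form defining the ellipse is forced to satisfy $q(e_1)=q(e_2)=1$, $q(e_1,e_2)=0$, hence is the standard form, hence $E$ is uniquely determined. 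Second, your flagging of the centre-matching hurdle in (ii)$(\Leftarrow)$ is a genuine observation: as literally stated for point-set triangles, the "Moreover" clause can fail when $\delta$ permutes the centre vertex with an ellipse vertex. The resolution you propose — reading $T,T'$ as oriented triangles with the centre as the distinguished first vertex — is exactly what the paper does implicitly when it applies the lemma inside the proof of Theorem \ref{theorem:ellipses}, where the $T_i$ are oriented with $O$ first. So your proof is sound, and the caveat you raise is a real (if minor) gap in the lemma's statement, not in your argument.
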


\smallskip
Let $O$ be the center of $E\in \mathcal E$.
Let $(A,B)$ be a pair of
conjugate rational semi-diameters of
$E$, say $A=\conv(O,x)$ and $B=\conv(O,y)$.
Then the
sum of the denominators
of $x$ and $y$
is said to be the {\it index of $(A,B)$}.

\begin{theorem}
[Rational ellipses in $\GLtwoZ$-geometry]
\label{theorem:ellipses}
For any   $E\in \mathcal E$
let   $\{D_1,\dots,D_q\}$ be the
 set of all   pairs
$D_i=(A_i,B_i)$ of conjugate
semi-diameters  of $E$ having the smallest index.
For each $i=1,\dots,q$ let
the  triangle $T_i=\conv(A_i \cup B_i)$ be oriented
  so that $O$ is the first vertex, followed by
the vertex of $A_i$, followed by the
vertex of $B_i$.
With \,\,
 $\mathsf{tri} (T_i)$ \,\, the invariant defined in   
Theorem   \ref{theorem:triangle},  let 
$$\mathsf{ell} (E)=\{\mathsf{tri} (T_1),
\dots,\mathsf{tri} (T_q)\}.
$$
 We then have:
\begin{itemize}
\item[(i)]
$\mathsf{ell} $ is a 
complete $\GLtwoZ$-orbit  invariant
of   ellipses in $\mathcal E$.

\medskip
\item[(ii)]  For any  rational
quadratic polynomial $\phi(x,y)$  
whose zeroset $Z(\phi)$  is an
element of $\mathcal E$,
 (a decidable condition, by Lemma \ref{lemma:newstart}),
the  map $\phi \mapsto \mathsf{ell} (Z(\phi))$ is computable.

\medskip
 \item[(iii)] Thus there is a decision procedure
 for  the problem whether
 two rational ellipses $E,E'\in \mathcal E$  
have the same $\GLtwoZ$-orbit.  
 When this is the case, a map
  $\gamma\in \GLtwoZ$
of  $E$ onto $E'$ can be effectively computed.
 \end{itemize}
\end{theorem}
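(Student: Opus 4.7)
The plan is to reduce the ellipse orbit problem to the triangle orbit problem of Theorem \ref{theorem:triangle} by means of Lemma \ref{lemma:semi-diameter}(ii), verifying en route that the set of minimum-index conjugate semi-diameter pairs of $E$ is effectively computable.

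For part (ii), starting from a rational quadratic $\phi$ with $Z(\phi) = E \in \mathcal E$ (a decidable hypothesis by Lemma \ref{lemma:newstart}), I would first compute the rational center $O$ of $E$ from the coefficients of $\phi$, and then enumerate the rational points of $E$ in the lexicographic order of increasing denominators afforded by Lemma \ref{lemma:newstart}. For each enumerated rational $x \in E$ distinct from $O$, the semi-diameter $\conv(O,x)$ is rational and, by Proposition \ref{proposition:ellipses}, its conjugate semi-diameter $\conv(O,y)$ is rational and effectively computable from $\phi$; hence the index $\den(x)+\den(y)$ is computable. Once a first conjugate pair of index $M$ is found, the enumeration is continued until every rational point of $E$ with denominator at most $M$ has been inspected, after which the finite list of minimum-index ordered pairs $\{D_1,\dots,D_q\}$ is complete. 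Applying the computable invariant of Theorem \ref{theorem:triangle} to each oriented triangle $T_i$ then produces $\mathsf{ell}(E)$.

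For parts (i) and (iii), the forward direction of completeness is routine: an affine map $\gamma \in \GLtwoZ$ sending $E$ to $E'$ preserves the center, midpoints, and parallelism, so it carries conjugate semi-diameter pairs to conjugate semi-diameter pairs; since $\gamma$ also preserves denominators of rational points, it preserves the index of every such pair and thus bijects the minimum-index pairs of $E$ with those of $E'$ in a way consistent with the ordering $(A_i,B_i)$. Theorem \ref{theorem:triangle} then forces $\mathsf{tri}(T_i)=\mathsf{tri}(T'_{\sigma(i)})$, giving $\mathsf{ell}(E)=\mathsf{ell}(E')$. Conversely, from $\mathsf{ell}(E)=\mathsf{ell}(E')$ I pick any $T_i$, $T'_j$ with $\mathsf{tri}(T_i)=\mathsf{tri}(T'_j)$; the effective form of Theorem \ref{theorem:triangle} yields $\delta \in \GLtwoZ$ sending $T_i$ onto $T'_j$ in the prescribed orientation, and Lemma \ref{lemma:semi-diameter}(ii) lifts $\delta$ to a $\GLtwoZ$-map of $E$ onto $E'$. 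Combining this with the computability established in (ii) proves the decision procedure and effective extraction of $\gamma$ claimed in (iii).

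The main obstacle is the computability claim of (ii): one must justify that $\{D_1,\dots,D_q\}$ is a finite set and that a rigorous stopping rule for the enumeration exists. Both rest on one observation: since $E$ contains rational points and every rational semi-diameter has a rational conjugate, the set of indices of conjugate rational semi-diameter pairs is a non-empty subset of the positive integers, so its infimum is attained by some minimum $M$; the boundedness of $E$ then guarantees that only finitely many rational points of $E$ have denominator at most $M$, which controls both the size of $\{D_1,\dots,D_q\}$ and the halting point of the enumeration. A minor technical point is the compatibility of the orientation convention with affine equivalence, which is immediate because every $\gamma \in \GLtwoZ$ sends $O$ to the center of $\gamma(E)$ and respects the ordering of the pair $(A_i,B_i)$.
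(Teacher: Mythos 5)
Your proposal is correct and follows essentially the same route as the paper's proof: computing the center, enumerating rational points of $E$ by increasing denominator, finding the minimum-index conjugate pairs via a halting bound, reducing the orbit question to triangle invariants through Theorem~\ref{theorem:triangle}, and lifting the resulting affine map via Lemma~\ref{lemma:semi-diameter}(ii). The only cosmetic difference is that the paper brute-forces all candidate pairs $(x,y)$ of low-denominator points, whereas you exploit the direct computability of the conjugate semi-diameter from a given one, but this does not change the substance of the argument.
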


\begin{proof} 
(i)  For any  $E,E'\in \mathcal E$ we must show:
$$
\mbox{$E$ and  $E'$
have the same $\GLtwoZ$-orbit\,\,\,\, iff \, $\mathsf{ell} (E)
=\mathsf{ell} (E')$.}
$$

\noindent
$(\Leftarrow)$  Suppose  $\mathsf{ell} (E)=\mathsf{ell} (E')$. 
By assumption, 
  $E$ has a pair  $(A,B)$ of rational conjugate
semi-diameters
of smallest index $d$,\,\, and 
$E'$ has a pair $(A',B')$
of rational conjugate
semi-diameters of the same smallest index $d$,  such  that
the two triangles   $\conv(A\cup B)$ and $\conv(A'\cup B')$ 
have the same  invariants.
 By  Theorem  \ref{theorem:triangle},
 the two triangles $\conv(A\cup B)$
 and  $\conv(A'\cup B')$ 
 have the same $\GLtwoZ$-orbit.
By Lemma \ref{lemma:semi-diameter},
$E$ and $E'$ have the same $\GLtwoZ$-orbit.

 \medskip
\noindent
$(\Rightarrow)$ 
Let $\gamma\in \GLtwoZ$ map $E$ onto $E'$.
Let $O$ be the center of $E$, and $O'$ the center of $E'$.
Since  $\gamma$ preserves
ratios of collinear segment lengths, as well as
parallel and tangent lines, 
then by Lemma \ref{lemma:semi-diameter}, 
$O'=\gamma(O)$. Further, 
 $\gamma$ sends any pair $(A,B)$ of conjugate semi-diameters
of $E$ to a pair $(A',B')$  of conjugate semi-diameters 
of $E'=\gamma(E)$.
The preservation properties of the affine transformation
$\gamma$ ensure that  the image $\gamma(E)$ coincides
with the ellipse constructed from  $(A',B')$
according to Lemma \ref{lemma:semi-diameter}. 
Pick a triangle  $T$ of $E$ arising from a pair of semi-diameters
of smallest index $d$.   
Since $\gamma$ preserves 
all numerical invariants in 
Theorems \ref{theorem:angle}
and  \ref{theorem:quadruple}, then
   the two sides of
$\gamma(T)$  having $O'$ as a common vertex
will be conjugate semi-diameters of $E'$ of smallest
index  $=d$.  Further,  
$\mathsf{tri} (T)=\mathsf{tri} (\gamma(T))$.  
It follows  that
 $\gamma$ induces a bijection  $\beta$ between 
pairs $(A_j,B_j),  \,\,\, j=1,\dots,q,$  
of conjugate semi-diameters of $E$ of smallest index, 
and pairs $(\beta(A_j),\beta(B_j))$ 
of conjugate semi-diameters of $E'$ of smallest
index, and we may write 
$\gamma\colon \conv(A_j\cup B_j)
\cong \conv(\beta(A_j)\cup \beta(B_j))$.  
By  Theorem \ref{theorem:triangle}, 
$\mathsf{tri} (T_j)=\mathsf{tri} (\gamma(T_j))$ 
for each $j=1,\dots, q$.
By definition,   $\mathsf{ell} (E)=\mathsf{ell} (E')$,
 which completes the proof of (i).

\medskip
(ii)  To prove the computability of the map
 $\phi\mapsto \mathsf{ell} (Z(\phi))$
we preliminarily check that  the zeroset 
 $Z(\phi)$ is a member of $\mathcal E$. 
By Lemma \ref{lemma:newstart},
 this condition can be decided
  by a Turing machine over the input given by the coefficients
of $\phi$.
If the condition is satisfied, 
 letting $E=Z(\phi)$  we proceed as follows:
 
 \medskip
\begin{enumerate}

\item[] We compute the (automatically rational) center $O$ of $E$,
and let  $S$ be a closed square 
with rational vertices in $\mathbb R^2$,  centered at $O$ and containing 
$E$;

\medskip
\item[] 
 For each $j=1,2,\dots$ we let  $X_j$ be the set of rational points of
$E$ of denominator $\leq j$.    $X_j$ is effectively
computable, because there are only finitely many rational 
points  $x\in S$ of denominator  $\leq j,$ and it is decidable
whether  any such point  $x$ lies in $E$;

\medskip
\item[] 
 For any pair  $(x,y)$ of points in $X_j$ we check whether
   $(\conv(O,x),\conv(O,y))$ is a pair of conjugate semi-diameters
   of $E$. As already noted, this can be done in an effective way; 

\medskip
\item[] 
Let  $d$ be the smallest integer such that  $X_d$
contains two points  $x,y$ having the property that 
$(\conv(O,x),\conv(O,y))$  is a pair
 of  rational conjugate semi-diameters  of $E$ of index $d$.
Since   $E$ does have rational conjugate 
semi-diameters, after a finite number of steps
such $d$ will be found;

\medskip
\item[]  Let  $\{D_1,\dots,D_q\}$ be the  (necessarily finite)  set of all
pairs of conjugate semi-diameters of $E$  of index $d$.
For any $D_i=(A_i,B_i)$, letting $T_i=\conv(A_i\cup B_i)$,
we compute  
$\mathsf{tri} (T_i)$  as in   Theorem
\ref{theorem:triangle}. 
 We finally write  $\mathsf{ell} (E)=\{\mathsf{tri}
  (T_1),\dots,\mathsf{tri} (T_q)\}$.
\end{enumerate}

\medskip
\noindent
Since all these steps are effective, 
the map $\phi\mapsto \mathsf{ell} (Z(\phi))$
 is computable.

\medskip
(iii) This immediately follows from  the proof of (i) and (ii). 
The  proof of (ii)
also shows that there is
 a Turing machine having the following
property:  whenever  $E$ and  $E'$
have the same $\GLtwoZ$-orbit,
a map
 $\gamma\in \GLtwoZ$
 of  $E$ onto $E'$
 is effectively obtainable from the input data
$\phi$ and $\phi'$.
\end{proof}

The computable  complete  invariant  \,\,$\mathsf{ell} $\,\,    of the
foregoing theorem   is here to stay,
 because of the following Turing equivalence
    result, whose proof is similar to the proof of
 Proposition \ref{proposition:equivalence-bis}:

 \begin{proposition}
 [Universal property of  $\mathsf{ell} $]
 \label{proposition:equivalence}
 Suppose  $\mathsf{newell}$ is a
 computable complete  invariant
 of ellipses in  $\mathcal E$ in $\GLtwoZ$-geometry.
Then there is a Turing machine $\mathcal R$ which, over any 
input string $\alpha$ coinciding with $\mathsf{newell}(E)$ for  some 
$E\in \mathcal E$,  outputs the string
  $\mathcal R(\alpha)=\mathsf{ell} (E)$. 
Conversely, there is a Turing machine $\mathcal S$ which, over any 
 input string   $\beta=\mathsf{ell} (F)$ for some $F\in\mathcal E$,  
 outputs the string
 $\mathcal S(\beta)=\mathsf{newell}(F)$.
The  two maps  $\alpha\mapsto \mathcal R(\alpha)$ 
 and $\beta \mapsto \mathcal S(\beta)$ are inverses of each other.
 \end{proposition}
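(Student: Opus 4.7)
The plan is to adapt the proof of Proposition \ref{proposition:equivalence-bis} verbatim, replacing the enumeration of rational oriented angles by an enumeration of members of $\mathcal E$. First I would fix a lexicographic enumeration $\phi_0,\phi_1,\dots$ of all quadratic polynomials with rational coefficients. By Lemma \ref{lemma:newstart}, the condition ``$Z(\phi_k)\in\mathcal E$'' is decidable, so I can effectively extract from this the recursively enumerated list
\begin{equation*}
E^0,\,E^1,\,E^2,\dots
\end{equation*}
of all rational ellipses (each $E^k$ presented by a defining polynomial $\phi_{k'}$).

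For the machine $\mathcal R$, given an input string $\alpha$ which, by hypothesis, equals $\mathsf{newell}(E)$ for some $E\in\mathcal E$, I would compute in sequence $\mathsf{newell}(E^0),\mathsf{newell}(E^1),\dots$, using the assumed computability of $\mathsf{newell}$. After finitely many steps the first index $t$ with $\mathsf{newell}(E^t)=\alpha$ is found. At this point I output
\begin{equation*}
\mathcal R(\alpha)=\mathsf{ell}(E^t),
\end{equation*}
which is computable by Theorem \ref{theorem:ellipses}(ii). By completeness of $\mathsf{newell}$, the equality $\mathsf{newell}(E^t)=\mathsf{newell}(E)$ forces $E^t$ and $E$ to have the same $\GLtwoZ$-orbit; then by completeness of $\mathsf{ell}$, also $\mathsf{ell}(E^t)=\mathsf{ell}(E)$, so the output is as required. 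The machine $\mathcal S$ is defined symmetrically: on input $\beta=\mathsf{ell}(F)$, enumerate $\mathsf{ell}(E^0),\mathsf{ell}(E^1),\dots$ (each computable by Theorem \ref{theorem:ellipses}(ii)) until the first index $r$ with $\mathsf{ell}(E^r)=\beta$ is found, and then output $\mathsf{newell}(E^r)$, which by the same completeness argument equals $\mathsf{newell}(F)$.

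Finally, the inverse property: if $\alpha=\mathsf{newell}(E)$ then $\mathcal R(\alpha)=\mathsf{ell}(E)$, and applying $\mathcal S$ to $\mathsf{ell}(E)$ produces $\mathsf{newell}(E^r)$ for some $E^r$ with $\mathsf{ell}(E^r)=\mathsf{ell}(E)$; by completeness of $\mathsf{ell}$ this $E^r$ has the same orbit as $E$, and by completeness of $\mathsf{newell}$ we conclude $\mathcal S(\mathcal R(\alpha))=\mathsf{newell}(E)=\alpha$. The reverse composition $\mathcal R\circ\mathcal S$ is handled identically.

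There is no real obstacle here, only one expository caveat to flag, exactly parallel to the one in Proposition \ref{proposition:equivalence-bis}: since we are not assuming that the range of $\mathsf{newell}$ is decidable, $\mathcal R$ may fail to halt on strings $\alpha$ that do not arise as $\mathsf{newell}(E)$ for any $E\in\mathcal E$, and similarly for $\mathcal S$. The statement is only about valid input strings, so this does not affect the conclusion; I would include a sentence to make this explicit, as in the earlier proof.
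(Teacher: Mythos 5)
Your proposal is correct and follows essentially the same route the paper intends: the paper declares the proof ``similar to the proof of Proposition \ref{proposition:equivalence-bis}'', and you have carried out precisely that adaptation, replacing the lexicographic enumeration of rational angles by an effective enumeration of rational ellipses (filtered via the decidability criterion of Lemma \ref{lemma:newstart}) and invoking Theorem \ref{theorem:ellipses}(ii) for the computability of $\mathsf{ell}$. The caveat you flag about non-termination on strings outside the range of $\mathsf{newell}$ is exactly the one made in the proof of Proposition \ref{proposition:equivalence-bis}, so nothing is missing.
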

 

\section{Polyhedra in  $ \GLnZ$ geometry}
\label{section:polyhedra} 
 
Following \cite[1.1]{sta}, by a
  {\it polyhedron}  (``compact polyhedron'',  in \cite[2.2]{rousan})  
  we mean the union $P=\bigcup_iS_i$ of finitely many
  simplexes  $S_i$   in $\mathbb R^n$. 
$P$ need not be convex or connected. 
The $S_i$ need not have 
the same dimension.
If the vertices of each $S_i$ have  rational coordinates,
$P$ is said to be a {\it rational polyhedron}.


Rational polyhedra  play a key  role in
the  recognition  problem of  combinatorial
manifolds  presented  
as   rational polyhedra  $X,Y$. 
As a matter of fact,  (see, e.g., \cite[p.55]{glamad}),
$X$ is homeomorphic to $Y$ iff
there is a 
 {\it rational PL-homeomorphism} $\eta$ 
 of $X$ onto $Y$, i.e.,
 a  finitely piecewise affine linear  (PL) one-one
 continuous  map
  $\phi$  of $X$ onto $Y$  such that every affine linear piece of  
$\phi$   has rational coefficients.


It follows that the set $ \mathcal S$ of 
pairs of  rationally PL-homeomorphic
 polyhedra is  recursively
enumerable.  And yet,  the complementary set  is not:

\medskip 
\begin{theorem}
 [A.A. Markov, 1958. 
See  \cite{chelek, sht} and references therein] 
\label{theorem:markov}
The problem whether two rational polyhedra  
$X$  and $Y$   are rationally PL-ho\-m\-eo\-mor\-ph\-ic is 
   undecidable.
   \end{theorem}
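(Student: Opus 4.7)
The plan is to reduce the problem to the undecidability of the triviality problem for finitely presented groups, following the classical Markov route. First, I invoke the Adian--Rabin theorem: there is a fixed finitely presentable Markov property (triviality is the canonical example) for which no Turing machine decides, on input a finite presentation $\langle g_1,\dots,g_k \mid r_1,\dots,r_\ell\rangle$, whether the presented group $G$ has this property. In particular, there is no algorithm deciding whether $G$ is the trivial group.

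Next I would implement Markov's geometric construction in the category of rational polyhedra. Given a finite presentation of a group $G$, one assembles a compact connected PL $4$-manifold $M_G$ whose fundamental group is $G$: start from the boundary connected sum of $k$ copies of $S^1\times D^3$ (one handle per generator), and for each relator $r_j$ attach a $2$-handle $D^2\times D^2$ along an embedded circle in the boundary realizing the word $r_j$. All attaching maps are explicit piecewise linear maps on simplicial models whose vertices can be chosen with rational coordinates, so the map $G\mapsto M_G$ is Turing computable as a rational polyhedron (and in fact as a rational PL $4$-manifold). Fix as a reference polyhedron the double $N_0$ of the standard $4$-ball (a PL $4$-sphere), again realized with rational vertices. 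The key geometric input is: $M_G$ is PL-homeomorphic to $N_0$ if and only if $G$ is trivial. The forward implication uses that any PL $4$-sphere is simply connected; the reverse implication uses that when $G=1$, the $2$-handles can be slid, cancelled against the $1$-handles, and the resulting handlebody collapses to a PL $4$-ball whose double is $N_0$. This handle-cancellation argument is the classical content of Markov's construction.

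To transfer from PL-homeomorphism to rational PL-homeomorphism, I would invoke the standard fact that every PL-homeomorphism between rational polyhedra can be approximated, and in fact straightened on a subdivision, by a PL-homeomorphism whose affine pieces have rational coefficients: choose a common simplicial subdivision on which the homeomorphism is simplicial, then perturb the images of vertices to nearby rational points, which is possible without altering the combinatorial type because rational points are dense and regularity of the simplexes is an open condition. Equivalently, as noted in the excerpt, $X$ is homeomorphic to $Y$ iff there is a rational PL-homeomorphism, so the distinction is immaterial for the decision problem.

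Combining these ingredients, the composite map $G \mapsto (M_G,\,N_0)$ is a Turing reduction from the triviality problem for finitely presented groups to the rational PL-homeomorphism problem for pairs of rational polyhedra. Since the former is undecidable by Adian--Rabin, so is the latter, proving the theorem. The main obstacle is the geometric step: verifying that $M_G \cong_{\mathrm{PL}} N_0 \iff G=1$. The nontrivial direction ($G=1 \Rightarrow M_G\cong N_0$) genuinely requires the four-dimensional handle-slide calculus (and, if one prefers smooth arguments, the resolution of the $4$-dimensional PL Poincar\'e conjecture in the simply connected handlebody case handled here); the converse direction and the rationalization of the construction are essentially bookkeeping.
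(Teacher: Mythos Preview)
The paper does not give a proof of this theorem at all; it is quoted as a classical result with a citation to \cite{chelek, sht}. So there is no ``paper's proof'' to compare against, and the relevant question is simply whether your sketch is a correct outline of the classical argument.

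Your overall strategy---reduce to the triviality problem for finitely presented groups via Adian--Rabin, then realize presentations by manifolds in a computable way---is exactly the classical route. However, the specific $4$-dimensional implementation you describe has a genuine gap. First, a minor point: as written, $M_G$ is a compact $4$-manifold \emph{with boundary} (a handlebody), while $N_0=S^4$ is closed, so they can never be PL-homeomorphic; presumably you intend to double $M_G$ before comparing. The serious issue is the reverse implication ``$G=1 \Rightarrow M_G$ (or its double) is standard''. For an arbitrary presentation of the trivial group, cancelling the $1$- and $2$-handles by slides is precisely the Andrews--Curtis problem, which is open and widely believed to fail. If instead you pass to the double and argue that it is a homotopy $4$-sphere (which it is, for a balanced presentation), concluding that it is PL-homeomorphic to $S^4$ is the $4$-dimensional PL Poincar\'e conjecture---equivalent in this dimension to the smooth case and still open. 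Invoking it, even in the ``simply connected handlebody case'', is not legitimate. Also note that for an unbalanced presentation of the trivial group (more relators than generators) the $2$-complex has nontrivial $H_2$, so the associated manifold cannot be standard; thus your biconditional fails already at the level of homology unless you restrict to balanced presentations.

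The proofs in the cited references avoid these $4$-dimensional pitfalls either by working in dimension $n\geq 5$, where the $s$-cobordism theorem is available to certify that trivial $\pi_1$ forces the constructed manifold to be standard, or by using a carefully stabilized construction (connected sums with copies of $S^2\times S^{n-2}$, etc.) so that Tietze-equivalent presentations provably yield PL-homeomorphic manifolds. If you redo your construction in dimension $\geq 5$ and replace the handle-cancellation step by an appeal to the $s$-cobordism theorem (or follow the stabilization approach), the argument goes through; in the form you wrote it, it does not.
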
 
 
\medskip
While  stated in terms of  rational polyhedra, this
theorem  had enough impact to
put an end to the  (Klein)  program of attaching
to {\it any} combinatorial manifold $X$ 
an  invariant characterizing $X$ up to homeomorphism.

It is an interesting open problem whether Markov unrecognizability 
theorem 
still holds when
rational PL-homeomorphisms
are replaced by  integer PL-ho\-m\-e\-o\-morph\-i\-sms. 
%

\smallskip
Given rational polyhedra $P,P'\subseteq \mathbb R^n$ let
us agree so write $P\cong P'$ if some $\gamma\in \GLnZ$
maps $P$ onto $P'$, in symbols, $\gamma\colon P\cong P'.$

 \begin{figure} 
    \begin{center}                                     
    \includegraphics[height=8.9cm]{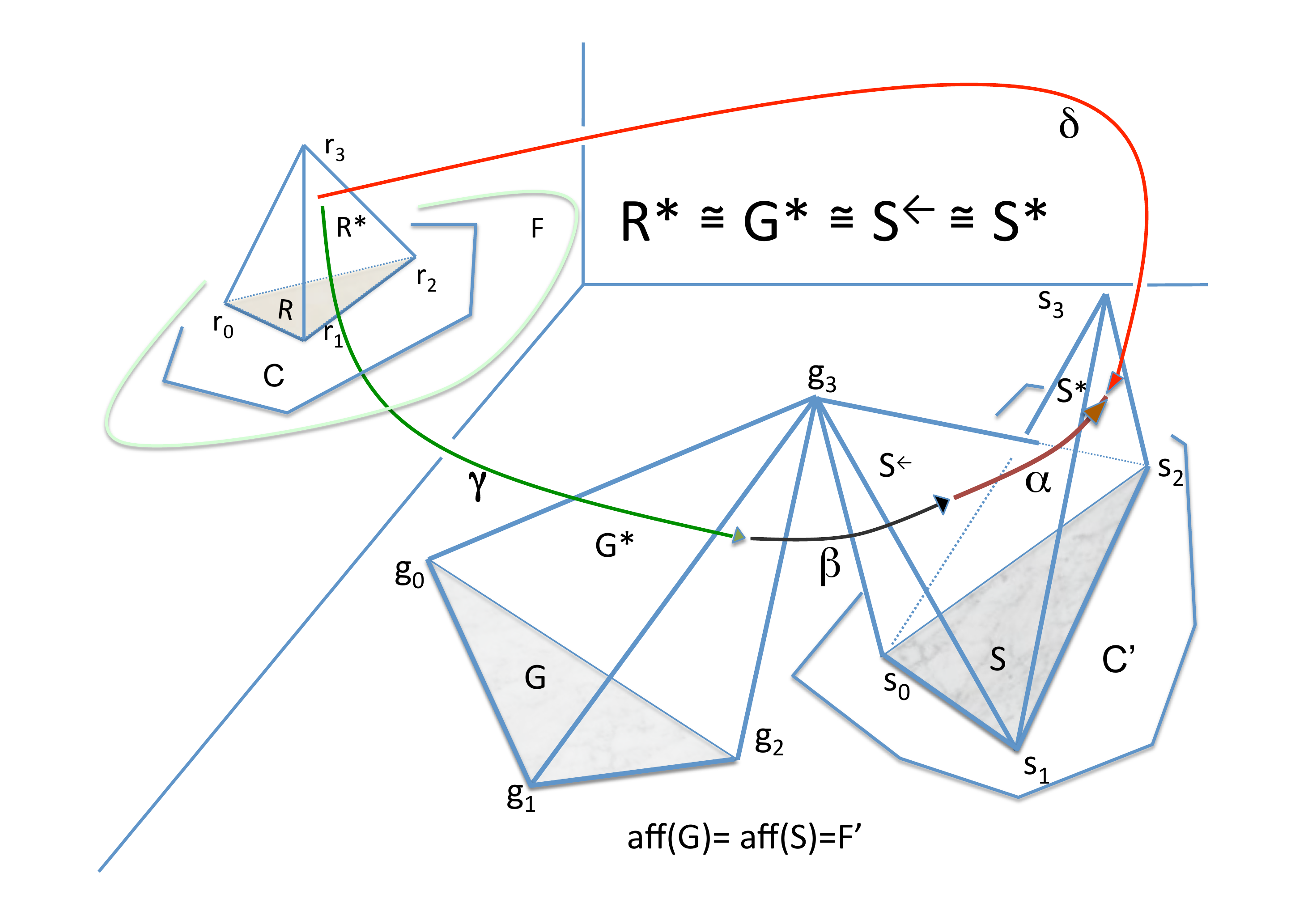}    
    \end{center}                                       
 \caption{\small  The special case $n=3$, $e=2$ 
 of the proof of Theorem 
 \ref{theorem:polyhedra}.}  
    \label{figure:tretetra}                                                 
   \end{figure}
 \begin{theorem}
 [Recognizing  rational polyhedra in $\GLnZ$-geometry]
\label{theorem:polyhedra}
The following problem is      decidable:

\medskip
\noindent
${\mathsf{INSTANCE}:}$    
Rational polyhedra $P=\bigcup_{i=1}^l S_i$ 
and $P'=\bigcup_{j=1}^m T_j\,,$ 
where each  $S_i$ and $T_j$ is a rational simplex
 in $\mathbb R^n$,
presented by the list of its vertices.
 
\medskip
\noindent
${\mathsf{QUESTION}:}$  
Does there exist  $\delta \in \GLnZ$ such that $\delta(P)=P'$?

\smallskip 
\noindent
Moreover, whenever such $\delta$ exists it can be
effectively computed.
\end{theorem}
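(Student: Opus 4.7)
The strategy is to reduce the orbit question to a finite enumeration of candidate maps, using the fact that $\GLnZ$-transformations preserve denominators of rational points combined with the rigidity supplied by Lemma \ref{lemma:affinemap}.

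First, compute $e=\dim(P)$ and $e'=\dim(P')$ from the input triangulations; if $e\ne e'$, output NO. Form $F=\aff(P)$ and $F'=\aff(P')$ and apply Corollary \ref{corollary:affine-subspace} to decide whether $F$ and $F'$ lie in the same $\GLnZ$-orbit. If not, output NO; if yes, obtain some $\gamma\in\GLnZ$ with $\gamma(F)=F'$ and replace $P$ by $\gamma(P)$, so henceforth $\aff(P)=\aff(P')=F$. Since $P\subseteq F$, any $\delta\in\GLnZ$ taking $P$ onto $P'$ must stabilize $F$ setwise, and the restriction $\delta|_F$ is determined by its action on any $(e+1)$-tuple of affinely independent points of $F$.

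Next, produce an anchor inside $P$. Start from the input triangulation $P=\bigcup S_i$ and, if necessary, refine it by Farey blow-ups into a regular triangulation of $P$ (the desingularization procedure recalled in Section \ref{section:cd}). Pick any $e$-simplex of this regular triangulation; its vertices $v_0,\ldots,v_e\in P$ are rational points and $\conv(v_0,\ldots,v_e)$ is a regular $e$-simplex in $F$. For any $\bar\delta\in\GLnZ$ with $\bar\delta(P)=P'$, the image tuple $(\bar\delta(v_0),\ldots,\bar\delta(v_e))$ consists of rational points of $P'$ with the same individual denominators, spanning a regular $e$-simplex. Since $P'$ is compact and only finitely many rational points of $P'$ have denominator bounded by $\max_i\den(v_i)$, there are only finitely many candidate tuples $(v'_0,\ldots,v'_e)$ of points of $P'$ satisfying $\den(v'_i)=\den(v_i)$ for each $i$ and $\conv(v'_0,\ldots,v'_e)$ regular; all such tuples, together with their orderings, can be enumerated mechanically.

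For each candidate $(v'_0,\ldots,v'_e)$, use Lemma \ref{lemma:steinitz} and Theorem \ref{theorem:dddccc} to extend both $(v_0,\ldots,v_e)$ and $(v'_0,\ldots,v'_e)$ to regular $n$-simplexes $\conv(v_0,\ldots,v_n)$ and $\conv(v'_0,\ldots,v'_n)$ by adjoining vertices of denominator $c_F$. Lemma \ref{lemma:affinemap} then computes the unique $\delta\in\GLnZ$ sending $v_i\mapsto v'_i$ for all $i\in\{0,\ldots,n\}$. Different extensions yield different $\delta$, but all agree on $F$, hence on $P\subseteq F$. Finally, test whether $\delta(P)=P'$: this amounts to deciding equality of two rational polyhedra in $\mathbb R^n$, a decidable problem (e.g.\ by mutual simplex-by-simplex containment verified through linear programming, or by appeal to quantifier elimination in the theory of real closed fields). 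Output $\delta\circ\gamma^{-1}$ for the first successful candidate; if none succeeds, output NO.

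Soundness is automatic from the verification step. Completeness follows because any $\bar\delta\in\GLnZ$ with $\bar\delta(P)=P'$ produces a tuple $(\bar\delta(v_0),\ldots,\bar\delta(v_e))$ that appears among the enumerated candidates; the $\delta$ computed from that candidate agrees with $\bar\delta$ on $F$, so $\delta(P)=\bar\delta(P)=P'$. The main obstacle is not computational but conceptual: ensuring that the candidate set is truly finite. This rests on the compactness of $P'$ together with the fact that $\GLnZ$-maps preserve denominators; without both ingredients (for instance if $P'$ were noncompact, or if $\GLnQ$ replaced $\GLnZ$), the enumeration would fail, in line with the contrast drawn in the closing remark with Markov's unrecognizability theorem.
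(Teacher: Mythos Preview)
Your proposal is correct and follows essentially the same strategy as the paper: reduce to a common affine span via Corollary~\ref{corollary:affine-subspace}, fix a regular $e$-simplex as an anchor, enumerate the finitely many candidate image tuples (finiteness coming from boundedness plus denominator preservation), and for each candidate build the map via Lemma~\ref{lemma:affinemap} and test whether it sends $P$ onto $P'$. The only cosmetic differences are that the paper anchors in $\conv(P)$ and searches for candidates in $\conv(P')$ rather than in $P$ and $P'$ themselves, and that your final output should be $\delta\circ\gamma$ rather than $\delta\circ\gamma^{-1}$ (after replacing $P$ by $\gamma(P)$, a successful $\delta$ gives $(\delta\circ\gamma)(P_{\mathrm{original}})=P'$).
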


\begin{proof}
From the two  lists of simplexes $S_i, T_j$ we 
construct rational triangulations
$\nabla$ of $P$ and $\nabla'$ of
 $P'$ following   \cite[Chapter II]{sta}.  
 (Also see  \cite[\S\S 17 and 25]{hand}.)
From the  vertices of 
  $\nabla$ and $\nabla'$, the algorithmic procedure of 
  {\cite[\S\S 7 and  22]{hand}}
yields the   (vertices of the) convex hulls
  $
  C= \conv(P) \,\,\,\mbox{ and } \,\,\,
  C'  =\conv(P')\subseteq \mathbb R^n.
  $
Let 
$$
F=\aff(P)=\aff(C)\,\,\, \mbox{ and } \,\,\,F'=\aff(P')=\aff(C').
$$
If   $P\cong P'$ then $C\cong C'$ and $F\cong F'$.  %
 Using the decision procedure of Corollary 
 \ref{corollary:affine-subspace}
   we  check whether 
the affine subspaces $F$ and $F'$
have the same $\GLnZ$-orbit. 
If this condition fails, our problem has a negative
answer. Otherwise,  we  introduce the notation  
  $$\dim(C)=\dim(C')=e,\,\,\,
  d_{F}=d_{F'}=d, \,\,\,
 c_{F}=c_{F'}=c.$$   
 Corollary \ref{corollary:affine-subspace}  yields 
 a map $\gamma\in \GLnZ$ such that 
\begin{equation}
\label{equation:hull}
\gamma\colon F\cong F'.
\end{equation}
 Since the rational polyhedron  $C$ is $e$-dimensional and convex,
 we  fix, once and for all,
  rational points $r_0,\dots,r_e\in C$ and
  additional points  $r_{e+1},\dots,r_n\in 
  \mathbb Q^n$
 such that $\conv(r_0,\dots,r_n)$ is an 
 $n$-simplex in $\mathbb R^n$. 
 Using, if necessary, the desingularization procedure
 described in 
  \cite[VI, 8.5]{ewa} or \cite[p.48]{ful},
 we may safely
 assume that $\conv(r_0,\dots,r_n)$  is regular.
 Let us use the abbreviations
 \begin{equation}
 \label{equation:V}
 V=(r_0,\dots,r_n)\in ( \mathbb Q^n)^{n+1},
 \,\, R=\conv(r_0,\dots,r_e), \,\,\,R^*=\conv(r_0,\dots,r_n).
\end{equation}
For  each  $i=0,\dots,n$ let us  set 
 $$
 g_i=\gamma(r_i),\,\,\, G=\conv(g_0,\dots,g_e) \mbox{ and }
 G^*=\conv(g_0,\dots,g_n).
 $$
 Observe that $G^*=\gamma(R^*)$ is a regular $n$-simplex, 
 $G=\gamma(R)$ is  a regular    $e$-simplex, and\,\, $\aff(G)$\,\,
 coincides with $ F'$ by \eqref{equation:hull}.

\bigskip
\noindent{\it Claim.}  The following conditions are equivalent:

\begin{itemize}
\item[(I)]
  There exists a map $\delta \in \GLnZ$
of $P$ onto $P'$.  

\smallskip
\item[(II)] There are    rational points
$s_0,\dots,s_e\in C'$  with the following properties:

\medskip
(i)  $\den(s_i)=\den(r_i)$ for each  $i=0,\dots,e$.

\medskip
(ii) $\conv(s_0,\dots,s_e)$ is a regular $e$-simplex;
(Thus  by
Lemma \ref{lemma:steinitz}
 the $n$-simplex  
$\conv(s_0,\dots,s_e,g_{e+1},\dots,g_n)$ is regular,
because
$G^*$ is regular  and  
$\aff(\{s_0,\dots,s_e\})=\aff(C')=\aff(G)=F'$).

\medskip

(iii)  Letting  $W\subseteq  ( \mathbb Q^n)^{n+1}$
be defined by 
\begin{equation}
\label{equation:W}
W=(s_0,\dots,s_e,g_{e+1},\dots, g_n),
\end{equation}
the map   $\phi=\phi_{VW}$ of 
$R^*$ onto $\conv(W)$
 given by Lemma \ref{lemma:affinemap}  sends $P$ onto $P'$.
 (Lemma \ref{lemma:affinemap}
 can be applied because  both  simplexes
$R^*$
and 
$\conv(W)$
 are regular and the denominators of their
vertices are pairwise equal.)
 \end{itemize}

 \bigskip
For the nontrivial  direction,
 suppose some $\delta\in \GLnZ$ maps $P$ onto $P'$.
 Then  
 $\delta\colon C\cong C'$ and $\delta\colon F\cong F'.$
For  each  $ i=0,\dots,n$  let us  define the rational  point $s_i\in C'$ by 
 \begin{equation}
 \label{equation:lorraine}
 s_i=\delta(r_i),
 \end{equation}
 with the intent of proving that $s_0,\dots,s_e$ 
 satisfy conditions   (i)-(iii).

\smallskip
 Condition  (i)   is immediately satisfied,
 because  $\delta$ preserves denominators.
Next, let us set 
  $S=\conv(s_0\dots,s_e)=\delta(R) \mbox{ and } 
  S^*=\conv(s_0\dots,s_n)=\delta(R^*).$
 Since $S^*$  is  regular    then so is     $S$, 
 and condition (ii) is satisfied. There remains
 to be proved that $s_0,\dots,s_e$  satisfy condition (iii).
 To this purpose let us first note that 
   both  $\gamma$ and $\delta$ map  $F$ onto $F'$, and hence
$
\aff(S)=\aff(G)=F'.
$
Further, from
$
\gamma\colon R^*\cong G^*
\,\,\,\mbox{and}\,\,\,\delta\colon R^*\cong  S^*
$
we get  
$S^*\cong G^*.$
By restriction, we obtain  regular $e$-simplexes
$R\cong G\cong S$
having the same $\GLnZ$-orbit.
Let
$
S^{\leftarrow}= \conv(s_0,\dots,s_e, g_{e+1},\dots,g_n).
$
Since
 the vertices  $g_{e+1},\dots,g_n$ are common to both 
$G^*$ and $S^{\leftarrow}$, by \eqref{equation:lorraine} we have
\begin{equation}
\label{equation:rs} 
\den(s_i)=\den(r_i)\,\,\,\mbox{for all } i=0,\dots,n.
\end{equation}
Since  $G^*$ is a regular $n$-simplex,   
 by Lemma \ref{lemma:steinitz} 
so is $S^{\leftarrow}$. 
Therefore, by
  Lemma \ref{lemma:affinemap},
  there is a  uniquely determined
$ \beta\in \GLnZ$  such that  
$
\beta\colon G^*\cong S^{\leftarrow}.
$
The two
$n$-simplexes  $S^{\leftarrow}$
and $S^*$ are regular and their vertices have pairwise equal
denominators,  because their first $e+1$ vertices
$s_0,\dots,s_e$ coincide, and  by  \eqref{equation:rs},
$$
\den(g_{i})=\den(r_i) =\den(s_i) \mbox{ for all }  i=e+1\dots,n.
$$

\noindent
Another application of  Lemma \ref{lemma:affinemap} 
yields  a uniquely determined
$\alpha\in \GLnZ$  such that 
$
\alpha\colon S^{\leftarrow}\cong S^*.$
It follows that 
$
\delta=\alpha\circ\beta\circ\gamma,
$
where  ``$\circ$'' denotes composition. 
Let  $\phi=\beta\circ \gamma.$ 
Then  $\phi\in \GLnZ$ maps  $R^*$ onto $S^{\leftarrow}$. 
Specifically, recalling  \eqref{equation:V}-\eqref{equation:W},
$\phi$ coincides with 
the map $\phi_{VW}$ of  
Lemma \ref{lemma:affinemap}.
By construction,  $\phi$
agrees with $\delta$ over $R$  (whence  $\phi$ agrees with
$\delta$ over $F\supseteq C \supseteq P$). 
Since $\delta$ maps $P$ onto $P'$,
then so does $\phi.$ Thus the points
$s_0,\dots,s_e$ satisfy  condition (iii). Our claim is settled.

 \medskip
Let now $\Omega$ be the set  of all $(e+1)$-tuples 
  $s=(s_0,\dots,s_e)$ of rational 
points  in $C'$ such that 
$\den(s_i)=\den(r_i)$ for all $i=0,\dots,e$, and  the set
$T_s=\conv(s_0,\dots,s_e)$ is a regular 
$e$-simplex---a condition that
can be  effectively checked.  
 $\Omega$ is a finite set,
because 
$C'$ is  bounded  and there are only finitely many
rational points in $C'$ with denominators  
$\leq \max(\den(r_0),\dots,\den(r_e)).$
It is easy to see that
   $\Omega$ is the set of all $(e+1)$-tuples
of rational points in $C'$ satisfying conditions (i)-(ii) in our claim.
Letting  the $n$-simplex $T^{\leftarrow}_s$ be defined  by 
$T^{\leftarrow}_s=\conv(s_0,\dots,s_e, g_{e+1},\dots,g_n),$  the
  regularity of $T^{\leftarrow}_s$ follows
   from the regularity of $T_s$ and of  $G^*$,  
   by  Lemma \ref{lemma:steinitz}.
From  the  $(n+1)$-tuple of rational points
$V$ defined in \eqref{equation:V} and the  $(n+1)$-tuple
 $
   U(s)=(s_0,\dots,s_e, g_{e+1},\dots,g_n), 
 $
  Lemma \ref{lemma:affinemap}  
yields  a uniquely determined
map $\phi_{VU(s)}\in \GLnZ$ of $R^*$ onto $T^{\leftarrow}_s$.

By  our claim, 
  $P\cong P'$ iff 
for at least one
$(e+1)$-tuple  $\bar s=(\bar s_0,\dots, \bar s_e)\in \Omega$,
 $\phi_{VU(\bar  s)}$  maps  
 $P$ onto $P'$, i.e., $\bar  s$ also satisfies
 condition (iii).  This final condition is decidable, by 
 resorting to   the triangulations
 $\nabla$ and $\nabla'$  constructed at the outset of the proof:
 indeed, we must only check whether  for each  
simplex in $ \nabla$
its $\phi_{VU(\bar  s)}$-image 
 is contained in the  union of simplexes of
$\nabla',$  and vice-versa, check  whether 
for each  
simplex in $ \nabla'$
its $\phi_{VU(\bar  s)}^{-1}$-image 
 is contained in the union of simplexes of
$\nabla.$

We have just shown the decidability of the problem
whether  there is   a map $\delta \in \GLnZ$
of $P$ onto $P'$. Our constructive proof also shows that
 whenever any such  $\delta$
exists, 
   it can be
effectively computed. 
\end{proof}

 \noindent
   Figure    \ref{figure:tretetra} illustrates the
   crux of the proof for  
     $n=3$ and $e=2$.

\subsection*{Concluding Remarks}
\label{section:final}

Suppose  $P$ and $Q$ are
finite unions of $n$-dimensional rational simplexes
 in $\mathbb R^n$
(for short,  $P$ and $Q$ are rational  ``$n$-polyhedra'').
Suppose there are rational triangulations $\Delta$ of 
$P$ and $\nabla$ of
$Q$ such that every simplex $T$ of $\Delta$ can be mapped
one-one onto a simplex of $\nabla$ by some 
$\eta_T \in \GLnQ$, in such a way that the set
$\eta=\bigcup\{\eta_T \mid T \in \Delta\}$ is a continuous one-one
map. We then say  that 
$\eta$ is a ``continuous $\GLnQ$-equidissection'', 
\cite[31.3]{hand}.

Markov unrecognizability 
 theorem 
  is to the effect  that the continuous
$\GLnQ$-equidissectability of $P$ and $Q$ is not decidable. 
It is an interesting open problem whether Markov's  theorem
still holds when
continuous $\GLnQ$-equidissections are
  replaced by  
 continuous $\GLnZ$-equidissections.
The subproblem of  
deciding whether $P$
 and $Q$
have  the same    $\GLnZ$-orbit has been shown to
be decidable in Theorem \ref{theorem:polyhedra}. 

%

%
%
%
%

Differently from the case of angles, segments,
triangles and ellipses,  our positive solution of Problem  
\eqref{equation:theproblem} for 
 rational polyhedra does  not rest on the
 assignment of  a  computable  complete
invariant to every
 rational polyhedron  $P\subseteq \mathbb R^n$.
 And yet, 
 $P$  is  equipped with a wealth of
 computable invariants for  
 continuous $\GLnZ$-equidissections---well beyond
 the classical homeomorphism invariants given by   dimension, 
  number of connected components,  or Euler characteristic.
These invariants include:
The number  of
 rational points in $P$ of a given denominator $d=1,2,\dots$;
The number  of regular  triangulations $\Delta$ of $P$
  such that the denominators of all vertices of $\Delta$ are $\leq d;$ \,\,
The smallest possible number of
 $k$-simplexes in a regular triangulation $\Delta$
 of $P$ such  that
 the denominators of all vertices of $\Delta$ are $\leq d$.
 All these new invariants are  (a fortiori), 
$\GLnZ$-orbit invariants---and none  makes  sense in euclidean
 geometry,  or even  in $\GLnQ$-geometry.  
 Closing a circle of ideas,  
one may then naturally
ask the following question:

\subsection*{Problem  ($n=2,3,\dots$)}\,\,\,
Can  the $\GLnZ$-orbit problem for    
  rational $n$-polyhedra
be decided  by 
computable  complete invariants?

\end{document}